\def\draftdate{January 10, 2017}
\newcommand{\supdot}{^{\bullet}}
\newcommand{\subdot}{_{\bullet}}
\let\iso\cong
\let\sma\wedge
\newcommand{\dR}{\mathbf{R}}
\newcommand{\putatop}[2]{\genfrac{}{}{0pt}{}{\scriptstyle #1}{\scriptstyle #2}}
\renewcommand{\to}{\mathchoice{\longrightarrow}{\rightarrow}{\rightarrow}{\rightarrow}}
\newcommand{\from}{\mathchoice{\longleftarrow}{\leftarrow}{\leftarrow}{\leftarrow}}
\newcommand{\overto}[1]{\xrightarrow{\,#1\,}}
\newcommand{\overfrom}[1]{\xleftarrow{\,#1\,}}
\DeclareMathAlphabet{\catsymbfont}{U}{rsfs}{m}{n}
\newcommand{\aA}{{\catsymbfont{A}}}
\newcommand{\aB}{{\catsymbfont{B}}}
\newcommand{\aC}{{\catsymbfont{C}}}
\newcommand{\Cat}{\mathop{\aC\!\mathrm{at}}\nolimits}
\newcommand{\aCat}{\Cat}
\newcommand{\aD}{{\catsymbfont{D}}}
\newcommand{\aE}{{\catsymbfont{E}}}
\newcommand{\aF}{{\catsymbfont{F}}}
\newcommand{\aG}{{\catsymbfont{G}}}
\newcommand{\aH}{{\catsymbfont{H}}}
\newcommand{\aM}{{\catsymbfont{M}}}
\newcommand{\aN}{{\catsymbfont{N}}}
\newcommand{\Mod}{\mathop {\aM\mathrm{od}}\nolimits}
\newcommand{\aO}{{\catsymbfont{O}}}
\newcommand{\LObj}{\aO\mathrm{bj}^{\ell}}
\newcommand{\aP}{{\catsymbfont{P}}}
\newcommand{\aQ}{{\catsymbfont{Q}}}
\newcommand{\aR}{{\catsymbfont{R}}}
\newcommand{\aS}{{\catsymbfont{S}}}
\newcommand{\BiSp}{\aB\aS}
\newcommand{\aW}{{\catsymbfont{W}}}
\newcommand{\aX}{{\catsymbfont{X}}}
\newcommand{\aY}{{\catsymbfont{Y}}}
\newcommand{\aZ}{{\catsymbfont{Z}}}
\newcommand{\bL}{{\mathbb{L}}}
\newcommand{\bR}{{\mathbb{R}}}
\newcommand{\bS}{{\mathbb{S}}}
\newcommand{\oA}{\mathcal{A}}
\newcommand{\Ass}{\mathop{\oA\mathrm{ss}}\nolimits}
\newcommand{\oD}{\mathcal{D}}
\newcommand{\oE}{\mathcal{E}}
\newcommand{\oEndb}{\mathop{\oE\mathrm{nd}^{b}}\nolimits}
\newcommand{\splus}{\Sigma^{\infty}_+}
\newcommand{\THC}{THC}
\newcommand{\CC}{CC}
\newcommand{\perf}{{\mathrm{Perf}}}
\def\quickop#1{\expandafter\DeclareMathOperator\csname
#1\endcsname{#1}}
\numberwithin{equation}{section}
\newtheorem{thm}[equation]{Theorem}
\newtheorem{main}{Theorem}
\newtheorem{cor}[equation]{Corollary}
\newtheorem{lem}[equation]{Lemma}
\newtheorem{prop}[equation]{Proposition}
\theoremstyle{definition}
\newtheorem{defn}[equation]{Definition}
\newtheorem{defnsch}[equation]{Definition Schema}
\newtheorem{cons}[equation]{Construction}
\theoremstyle{remark}
\newtheorem{example}[equation]{Example}
\newcommand{\term}[1]{\textit{#1}}
\begin{document}

\title
{$E_{2}$ Structures and Derived Koszul Duality in String Topology}

\author{Andrew J. Blumberg}
\address{Department of Mathematics, The University of Texas,
Austin, TX \ 78712}
\email{blumberg@math.utexas.edu}
\thanks{The first author was supported in part by NSF grant
DMS-1151577}
\author{Michael A. Mandell}
\address{Department of Mathematics, Indiana University,
Bloomington, IN \ 47405}
\thanks{The second author was supported in part by NSF grant DMS-1505579}
\email{mmandell@indiana.edu}

\date{\draftdate} 
\subjclass[2010]{Primary 55P50, 16E40, 16D90.}
\keywords{}

\begin{abstract}
We construct an equivalence of $E_{2}$ algebras between two models for
the Thom spectrum of the free loop space that are related by derived
Koszul duality.  To do this, we describe the functoriality and
invariance properties of topological Hochschild cohomology.
\end{abstract}

\maketitle


\section*{Introduction}

Chas and Sullivan started the subject of string topology with
their observation that the homology of the free loop space $LM$ of a
closed oriented manifold $M$ admits a Gerstenhaber structure that can
be defined geometrically in terms of natural operations on loops and
intersection of chains on the manifold.  Contemporaneously, the
solution to Deligne's Hochschild cohomology conjecture (by
Kontsevich-Soibelman~\cite{Kontsevich-Deligne},
McClure-Smith~\cite{McClureSmith-Deligne},
Tamarkin~\cite{Tamarkin-Deligne},
Voronov~\cite{Voronov-Deligne}, Berger-Fresse~\cite{BergerFresse-Deligne}, 
and perhaps others) established a Gerstenhaber
algebra structure on the Hochschild cohomology of a ring or
differential graded algebra or even $A_{\infty}$ ring spectrum.
Cohen-Jones~\cite{CohenJones-THC}, in the course of giving a homotopical interpretation of
the string topology product, connected these two ideas by relating a
certain Thom spectrum of $LM$ with the topological Hochschild
cohomology $\THC(DM)$ of the Spanier-Whitehead dual $DM$.  The homology
of $\THC(DM)$ is canonically isomorphic to the Hochschild cohomology of
the cochain algebra $C^{*}(M)$; Cohen-Jones~\cite{CohenJones-THC} in particular produces a
shifted isomorphism from the homology of the free loop space to the
homology of $\THC(DM)$ that takes the string topology product to the
cup product in Hochschild cohomology.  Later work of
Malm~\cite{Malm-Thesis} and
Felix-Menichi-Thomas~\cite{FelixMenichiThomas} give an isomorphism of
Gerstenhaber algebras.

The Felix-Menichi-Thomas work derives from work of
Keller~\cite{Keller-Picard} (see also~\cite{Keller-DIH}) building on
unpublished work of Buchweitz.  Keller~\cite{Keller-DIH} shows that
(under mild hypotheses) the Hochschild cochains of Koszul dual 
dg algebras are equivalent as
$E_{2}$ algebras (or, more specifically, $B_{\infty}$ algebras,
cf.~\cite{GerstenhaberVoronov,Young-Thesis}).
When $M$ is simply connected, the derived Koszul dual of $C^{*}M$ is
the cobar construction $\bar\Omega C_{*}M$, which is the Adams-Hilton
model for the chains on the based loop space $C_{*}(\Omega M)$;
Felix-Menichi-Thomas~\cite{FelixMenichiThomas} constructs an
isomorphism of Gerstenhaber algebras
\[
HH^{*}(C^{*}M)\iso HH^{*}(\bar\Omega C_{*}M).
\]
Since $HH^{*}(\bar\Omega C_{*}M)$ is isomorphic to the homology of
$\THC(\splus \Omega M)$,  
in spectral models, we should look for an equivalence of $E_{2}$ ring
spectra between $THC(DM)$ and $THC(\splus \Omega M)$.  Our main result
is the following theorem, proved in Section~\ref{sec:pfstring}.

\begin{main}\label{main:string}
Let $X$ be a simply connected finite cell complex; then $THC(DX)$ and $THC(\splus
\Omega X)$ are weakly equivalent as $E_{2}$ ring spectra.
\end{main}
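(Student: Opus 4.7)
The plan is to reduce the theorem to a combination of a spectral Koszul duality identification and a Morita-type invariance property of $\THC$ that preserves $E_{2}$ structures. The key step is to realize $\splus \Omega X$ as the endomorphism $A_{\infty}$ ring spectrum of $\bS$ viewed as a $DX$-module via the augmentation $DX \to \bS$ induced by the basepoint inclusion $* \to X$; once this is in place, the theorem should follow from the functoriality and invariance properties of $\THC$ developed earlier in the paper.

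First, for simply connected finite $X$, I would establish (or cite from prior work) the spectral Koszul duality equivalence
\[
\splus \Omega X \;\htp\; \End_{DX}(\bS)
\]
as $A_{\infty}$ ring spectra. This is the spectrum-level analog of the classical Adams--Hilton / Moore identification. Simple connectivity is needed so that the relevant cobar-type resolution converges, while the finite-cell hypothesis ensures that $\bS$ is a compact $DX$-module which generates the thick subcategory of $DX$-modules containing $DX$ itself: one can build $DX = F(X_{+},\bS)$ from $\bS$ by a finite sequence of cofiber sequences dictated by the cell structure of $X$.

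Second, I would invoke the invariance statement for $\THC$ from the body of the paper: given an $E_{1}$ (or $A_{\infty}$) ring spectrum $R$ and a compact object $M$ that generates the thick subcategory containing $R$, there is an $E_{2}$ equivalence
\[
\THC(R) \;\htp\; \THC(\End_{R}(M)).
\]
Applied with $R = DX$ and $M = \bS$, this gives
\[
\THC(DX) \;\htp\; \THC(\End_{DX}(\bS)) \;\htp\; \THC(\splus \Omega X),
\]
where the second equivalence is induced by the Koszul duality identification of the previous paragraph.

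The principal obstacle will be keeping track of the $E_{2}$ structure throughout. At the level of underlying spectra or even $A_{\infty}$ algebras, each of these identifications is essentially classical, but the output we need is an equivalence of $E_{2}$ ring spectra. This forces us to produce the Koszul duality equivalence $\splus\Omega X \htp \End_{DX}(\bS)$ as an $A_{\infty}$ equivalence in a form that feeds into the $\THC$ machinery, and it forces the Morita invariance of $\THC$ to be stated and proved with its full $E_{2}$/$B_{\infty}$ multiplicative structure. The technical work on functoriality of $\THC$ advertised in the abstract is presumably designed precisely to handle these two points, so the argument above should reduce to an assembly of those tools together with the Koszul duality input, with the simple connectivity and finiteness of $X$ entering exactly to guarantee the compactness and generation hypotheses required by the invariance theorem.
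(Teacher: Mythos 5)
Your high-level architecture — Koszul duality plus an invariance theorem for $\THC$ — is the right shape, and your first step ($\splus\Omega X \htp \End_{DX}(\bS)$, which is one of the two ``single centralizer'' conditions in the paper's language) is correct and is exactly part of what the paper uses. But the second step contains a genuine gap. You claim that ``the finite-cell hypothesis ensures that $\bS$ is a compact $DX$-module which generates the thick subcategory of $DX$-modules containing $DX$,'' with the justification that $DX = F(X_+,\bS)$ is built from $\bS$ by finitely many cofiber sequences dictated by the cell structure of $X$. Those cofiber sequences are sequences of \emph{spectra}, not of $DX$-modules, and the copies of $\bS$ in them do not carry the augmentation $DX$-module structure. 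In fact $\bS$ with the augmentation module structure is typically not a compact $DX$-module at all: rationalize and take $X = S^3$, so the algebraic analogue is asking whether $\bQ$ is a perfect complex over $H^*(S^3;\bQ)\iso\bQ[y]/(y^2)$ — it is not, since $\bQ$ has infinite minimal free resolution there. Consequently there is no Morita equivalence between $DX$ and $\splus\Omega X$, and you cannot deduce $\THC(DX)\htp\THC(\End_{DX}(\bS))$ from any compact-generation/Morita invariance. The body of the paper does not prove the statement you invoke; it proves Morita invariance (Theorem~\ref{main:me}), whose hypotheses fail here.

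What the paper actually uses is the strictly weaker \emph{double centralizer condition} for a $(\splus\Omega X, DX)$-bimodule equivalent to $\bS$: both $\splus\Omega X \to \dR\Hom_{DX}(\bS,\bS)$ (your step 1) \emph{and} $DX \to \dR\Hom_{\splus\Omega X}(\bS,\bS)$ are weak equivalences. The second map is not a formal consequence of compactness or generation — it is the Eilenberg--Moore / cobar convergence statement, and this is precisely where the simple-connectivity and finiteness hypotheses are used (the verification is cited from a companion paper, following Dwyer--Greenlees--Iyengar). The paper keeps Morita contexts (Example~\ref{ex:mc}) and the $DX$ / $\Omega X$ case (Example~\ref{ex:dxloop}) as distinct examples of the DCC precisely because the latter is not an instance of the former. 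Once the DCC is in hand, the proof of the theorem is a one-liner via the zigzag of $\oD_2$-algebras
\[
\CC(DX')\from\CC(\Cat_{SP})\to\CC(R)
\]
from Theorem~\ref{thm:catzigzag}; the $E_2$-equivalence is produced directly by this zigzag and does not require upgrading the Koszul duality equivalence $\splus\Omega X \htp \End_{DX}(\bS)$ to any particular multiplicative form, contrary to what your last paragraph anticipates. To repair your argument you would need to replace the Morita step with the DCC and supply the Eilenberg--Moore argument for the second centralizer map.
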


Beyond the technical role of $HH^{*}(\bar\Omega C_{*}M)$ in the
comparison of Gerstenhaber algebra structures, the spectral
analogue $\THC(\splus \Omega M)$ in the previous theorem also provides a
connection between string topology and topological field theory, as 
explained in~\cite{BCT}.  Furthermore, \cite{BCT} sketches a
relationship between $\THC(\splus \Omega M)$ and the wrapped Fukaya
category of $T^{*}M$, motivated by the work of
Abbondandolo-Schwarz~\cite{AbbondandoloSchwarz} and
Abouzaid~\cite{Abouzaid}.  (See also~\cite{SeidelICM} for discussion of the
significance of Hochschild cohomology of Fukaya categories.)
Indeed, the previous theorem (and the machinery we develop to prove
it) fills in results stated in~\cite{BCT} but deferred to a future
paper.

In the discussion above and in the statement of
Theorem~\ref{main:string}, we are using $\THC$ to denote a derived
version of the topological Hochschild cohomology spectrum.  What this
means is slightly complicated by the fact the standard cosimplicial
construction is not functorial.  In the setting of differential graded
categories, Keller~\cite{Keller-Picard,Keller-DIH} made sense of this
for Hochschild cochains and proved limited functoriality and
invariance results for Hochschild cohomology.  Part of the purpose of
this paper is to provide a spectral version of this theory.

For the $E_{2}$ structure, we use the McClure-Smith theory
of~\cite{McClureSmith-Deligne,McClureSmith-CosimplicialCubes},
which establishes the action of a specific $E_{2}$ operad $\oD_{2}$ on
totalization ($\Tot$) of the topological Hochschild
cosimplicial construction of a strictly associative ring spectrum in
any modern category of spectra (such as symmetric spectra, orthogonal
spectra, or EKMM $S$-modules).  We denote this point-set topological
Hochschild cohomology construction as $\CC$ in the following theorem,
proved in Section~\ref{sec:pfwefunct}.

\begin{main}\label{main:wefunct}
Let $\aS$ denote either symmetric spectra, orthogonal spectra, or EKMM
$S$-modules; let $\aS[\Ass]$ be the
category of associative ring spectra (in $\aS$); and let
$\Ho\aS[\Ass]$ denote its homotopy category.  Let
$\Ho\aS[\Ass]^{\simeq}$ denote the subcategory of $\Ho\aS[\Ass]$ where
the maps are isomorphisms. There is a contravariant functor
$\THC$ from $\Ho\aS[\Ass]^{\simeq}$ to the homotopy category of
$E_{2}$ ring spectra together with canonical isomorphisms $\THC(R)\to
\CC(R)$ for those $R$ whose underlying objects of $\aS$ are
fibrant and cofibrant relative to the unit.
\end{main}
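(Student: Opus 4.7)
The plan is to define $\THC(R)$ as $\CC(R)$ for those $R$ cofibrant and fibrant relative to the unit, extend to general associative ring spectra via functorial cofibrant-fibrant replacement, and establish functoriality on isomorphisms in $\Ho\aS[\Ass]$ through a hybrid cosimplicial construction that produces an $E_{2}$ zigzag of equivalences for each weak equivalence.

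Write $\aS[\Ass]^{cf}$ for the full subcategory of objects whose underlying spectra are cofibrant and fibrant relative to the unit; standard model category arguments produce a functorial cofibrant-fibrant replacement $Q : \aS[\Ass] \to \aS[\Ass]^{cf}$ together with natural weak equivalences $R \to QR$. For $R \in \aS[\Ass]^{cf}$ I set $\THC(R) := \CC(R)$ with its McClure-Smith $\oD_{2}$-action. The central technical construction is the following \emph{hybrid} cosimplicial spectrum associated to a morphism $f : R \to R'$ in $\aS[\Ass]^{cf}$: let $\CC(f)^{n} = \Hom(R^{\wedge n}, R')$, where $R'$ is viewed as an $R$-bimodule via $f$, and equip it with cosimplicial structure using the $R$-multiplication on the source and the $R$-bimodule structure on $R'$. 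Pre-composition with $f^{\wedge n}$ and post-composition with $f$ yield canonical cosimplicial maps
\[
\CC(R') \overto{\alpha_{f}} \CC(f) \overfrom{\beta_{f}} \CC(R),
\]
both levelwise weak equivalences (and hence weak equivalences after $\Tot$) whenever $f$ is, given the cofibrancy-fibrancy hypotheses.

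The main obstacle is to show that $\Tot \CC(f)$ carries an $\oD_{2}$-action extending the McClure-Smith structure such that $\alpha_{f}$ and $\beta_{f}$ are strict $\oD_{2}$-algebra maps. The McClure-Smith construction uses only a specific combinatorial multiplicative structure on the cosimplicial object; for $\CC(f)$ such a multiplication arises from the ring structure of $R'$ applied to the target. Then $\alpha_{f}$ is strictly multiplicative because the multiplications on $\CC(R')$ and $\CC(f)$ both come from $R'$, and $\beta_{f}$ is strictly multiplicative because $f$ is a ring map. Unwinding the McClure-Smith cosimplicial cube machinery in this two-ring setting and verifying compatibility of all operations is where the argument really lives.

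Granting this, the zigzag $\CC(R') \to \CC(f) \from \CC(R)$ defines an isomorphism $\THC(R') \to \THC(R)$ in the homotopy category of $E_{2}$ ring spectra, and composition compatibility for $f : R \to R'$, $g : R' \to R''$ follows by assembling a triple hybrid $\CC(g,f)^{n} = \Hom(R^{\wedge n}, R'')$ that receives maps from each of $\CC(f), \CC(g), \CC(gf)$ and from each of $\CC(R), \CC(R'), \CC(R'')$, together with a two-out-of-three argument. Finally, setting $\THC(R) := \CC(QR)$ and representing an arbitrary isomorphism $[f]$ in $\Ho \aS[\Ass]$ by a zigzag of weak equivalences in $\aS[\Ass]^{cf}$, independence of the chosen representative reduces to the naturality of the hybrid construction under commutative squares of weak equivalences, along the same lines as Keller's treatment of the differential graded case.
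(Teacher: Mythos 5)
Your hybrid cosimplicial spectrum $\CC(f)^n = F(R^{\wedge n}, R')$ does not carry the structure that McClure--Smith need, and this is the fatal gap. The $\oD_2$ action of~\cite[10.3]{McClureSmith-CosimplicialCubes} is induced from an \emph{operad with multiplication}: a non-symmetric operad $\oO$ together with a map $\Ass\to\oO$. For $\CC(R)$ this operad is $\oEndb(R)$, whose composition $\circ_i\colon \oEndb(R)(m)\wedge\oEndb(R)(n)\to\oEndb(R)(m+n-1)$ exists because the output of an element of $F(R^{\wedge n},R)$ can be plugged into an input of another (both are $R$). Your hybrid terms $F(R^{\wedge m}, R')$ and $F(R^{\wedge n}, R')$ have $R'$-outputs but $R$-inputs, and $f\colon R\to R'$ points the wrong way for substitution; there is no $\circ_i$. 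What survives is only the cup product from the multiplication of $R'$, which is a multiplicative cosimplicial structure yielding just an $A_\infty$ (i.e.\ $E_1$) structure on $\Tot$. The braces are exactly what separate $E_2$ from $E_1$, so the claim that $\Tot\CC(f)$ carries a $\oD_2$-action ``extending the McClure--Smith structure'' fails, and the triple-hybrid composition argument breaks for the same reason.

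The paper avoids this by never trying to put an $E_2$ structure on a hybrid $\Hom$ object. For a spectral functor $\phi\colon\aD\to\aC$ (for rings, a map $f\colon R\to R'$), it forms the genuine spectral category $\Cat_{\aF}$ with object set $O(\aC)\amalg O(\aD)$: the upper-triangular category with $\aC$ and $\aD$ on the diagonal and the $(\aC,\aD)$-bimodule $\aF=\aC(\phi(-),-)$ off-diagonal. Because $\Cat_{\aF}$ \emph{is} a spectral category, its Hochschild--Mitchell complex $\CC(\Cat_{\aF})=\oEndb(B_{\Cat_{\aF}})$ is an operad with multiplication, so its $\Tot$ is a $\oD_2$-algebra, and the strictly fully faithful inclusions of $\aC$ and $\aD$ into $\Cat_{\aF}$ give strict $\oD_2$-algebra maps $\CC(\aD)\from\CC(\Cat_{\aF})\to\CC(\aC)$ (Proposition~\ref{prop:limitednaturality}). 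Degreewise $\CC^n(\Cat_{\aF})$ is a product containing $\CC^n(\aC)$, $\CC^n(\aD)$, and a family of mixed $\Hom$-terms resembling your hybrid; it is this larger object, carrying all three kinds of factors at once, that supports the braces. That the two legs of the zigzag are weak equivalences under the centralizer conditions is Theorem~\ref{thm:catzigzag}, whose proof compares $\CC(\Cat_{\aF})$ to $\Hom^b_{\aC,\aD}(\aR(\aF),\aF)$ for a bar-type resolution $\aR$ -- an object of the shape you propose, but used there only to compute homotopy groups, never as the carrier of the $E_2$ structure. Composition is handled with $\Cat_{\phi_1,\ldots,\phi_n}$ (Definition~\ref{defn:nsimp}), again an honest spectral category rather than a pointwise $\Hom$-hybrid.
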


The correct generality for $\THC$ is the setting of small spectral
categories, which generalize associative ring spectra.  In
Section~\ref{sec:MS}, we explain that the McClure-Smith theory extends
to construct an $E_{2}$ structure on the $\Tot$ of the topological
Hochshild-Mitchell cosimplicial construction $\CC(\aC)$ for a small spectral
category $\aC$.  The natural weak equivalences for spectral categories
are the \term{Dwyer-Kan} equivalences, or DK-equivalences.  A spectral
functor $\phi\colon \aD\to \aC$ is a \term{DK-embedding} if it induces a
weak equivalence $\aD(a,b)\to \aC(\phi(a),\phi(b))$ for all objects $a,b$ of
$\aD$; a DK-equivalence is a DK-embedding that induces an equivalence
of \term{homotopy categories} $\pi_{0}\aD\to \pi_{0}\aC$.  

The following theorem, proved in Section~\ref{sec:pfdkfunct}, is the
natural generalization of Theorem~\ref{main:wefunct} to this setting;
the theorem roughly says that $\THC$ is functorial in DK-embeddings.  In it,
we use the condition for small spectral categories analogous to the
condition we used for associative ring spectra in
Theorem~\ref{main:wefunct}: We say that a small spectral category
$\aC$ is pointwise relatively cofibrant if the mapping spectra
$\aC(c,c)$ are cofibrant relative to the unit for all objects $c$ in
$\aC$ and the mapping spectra $\aC(c,d)$ are cofibrant for all pairs
of objects $c\neq d$ in $\aC$.  Similarly, we say a small spectral
category is pointwise fibrant if each mapping spectrum $\aC(c,d)$ is
fibrant.

\begin{main}\label{main:dkfunct}
Let $\aS\Cat$ denote the category of small spectral categories and\break
$\Ho(\aS\Cat)$ the category obtained by formally inverting the
DK-equivalences.  Let $\Ho(\aS\Cat)^{DK}$ be the subcategory of
$\Ho(\aS\Cat)$ generated by the DK-embeddings.   There is a contravariant
functor
$\THC$ from $\Ho(\aS\Cat)^{DK}$ to the homotopy category of
$E_{2}$ ring spectra together with canonical isomorphisms $\THC(\aC)\to
\CC(\aC)$ for those $\aC$ which are pointwise relatively cofibrant and
pointwise fibrant.
\end{main}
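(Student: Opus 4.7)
The plan is to parallel the proof of Theorem~\ref{main:wefunct}, working with spectral categories throughout and accommodating DK-embeddings (not just pointwise equivalences) as input morphisms. On the point-set level, $\CC$ is contravariantly functorial in arbitrary spectral functors: restriction along $\phi\colon\aD\to\aC$ produces a map $\CC^{\bullet}(\aC)\to\CC^{\bullet}(\aD)$ of cosimplicial spectra, and the McClure--Smith $E_{2}$ action from Section~\ref{sec:MS} is natural in the cosimplicial object, so this promotes to a point-set map of $E_{2}$ ring spectra.

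The first task is homotopy invariance for good spectral categories. For $\aD$ and $\aC$ pointwise relatively cofibrant and pointwise fibrant, a pointwise equivalence induces a weak equivalence on each cosimplicial level, since $\CC^{n}$ is assembled from iterated smash products of the cofibrant mapping spectra together with function spectra into the fibrant mapping spectra; standard $\Tot$-convergence then yields a weak equivalence on totalizations. When $\phi\colon\aD\to\aC$ is moreover a genuine DK-equivalence (rather than just a pointwise equivalence on a shared object set), one additionally needs that every string of composable morphisms in $\aC$ can be replaced up to $\pi_{0}\aC$-isomorphism by a string in the image of $\phi$; a cofinality argument then upgrades the levelwise fact to a weak equivalence of $E_2$ ring spectra.

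Next I would build a functorial pointwise relatively cofibrant, pointwise fibrant replacement $Q$ that preserves objects and comes equipped with a natural DK-equivalence $Q\aC\to\aC$. This is assembled from standard cofibrant and fibrant replacements on the mapping spectra, with a unit-relative cofibrant replacement on each endomorphism spectrum $\aC(c,c)$ to match the hypothesis on the unit, and compatibilities with composition handled by a standard inductive cellular construction. Set $\THC(\aC) := \CC(Q\aC)$. A point-set spectral functor $\phi\colon\aD\to\aC$ produces $Q\phi\colon Q\aD\to Q\aC$; if $\phi$ is a DK-embedding, then so is $Q\phi$, and we set $\THC(\phi) := \CC(Q\phi)$, which is a map of $E_2$ ring spectra by the functoriality observed above.

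The main obstacle is descending this assignment to a functor on $\Ho(\aS\Cat)^{DK}$: two point-set representatives of the same morphism, that is, zigzags of DK-embeddings composed with wrong-way DK-equivalences, must induce the same map of $E_{2}$ ring spectra. Following the $\Zig$-formalism indicated by the paper's notation, I would introduce an intermediate zigzag category, show that $\CC\circ Q$ sends wrong-way DK-equivalences of good categories to isomorphisms (by the invariance step above), and verify the remaining relations between zigzags; this is where the bulk of the technical work lies, and it mirrors the corresponding step for Theorem~\ref{main:wefunct} in Section~\ref{sec:pfwefunct}. The canonical isomorphism $\THC(\aC)\to\CC(\aC)$ on pointwise relatively cofibrant and pointwise fibrant $\aC$ is then induced by the natural DK-equivalence $Q\aC\to\aC$ via the invariance step, and Theorem~\ref{main:wefunct} is recovered by specializing to one-object spectral categories.
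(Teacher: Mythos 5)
The core claim in your first paragraph --- that $\CC$ is contravariantly functorial in arbitrary spectral functors, with restriction along $\phi\colon\aD\to\aC$ producing a point-set map $\CC\supdot(\aC)\to\CC\supdot(\aD)$ of cosimplicial spectra --- is false, and the paper flags this explicitly right after Proposition~\ref{prop:limitednaturality}: ``$\CC(\aC)$ is not functorial in $\aC$ (on the point-set level) in any reasonable way,'' and it only extends to \emph{strictly fully faithful} spectral functors. The obstruction is visible in the formula $\CC^{n}(\aC)\iso\prod F(\aC(c_1,c_0)\sma\cdots\sma\aC(c_n,c_{n-1}),\aC(c_n,c_0))$: a candidate restriction along $\phi$ needs maps $\aD(d_i,d_{i-1})\to\aC(\phi(d_i),\phi(d_{i-1}))$ on the sources, which $\phi$ supplies, but also a map $\aC(\phi(d_n),\phi(d_0))\to\aD(d_n,d_0)$ on the target, going the \emph{wrong} way relative to $\phi$. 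When $\phi$ is merely a DK-embedding these wrong-way maps are weak equivalences but not isomorphisms, so there is no canonical choice, and no cosimplicial map exists. (This is the familiar fact that Hochschild cohomology of rings, unlike Hochschild homology, is not functorial for ring maps.) The same issue undercuts the claimed McClure--Smith naturality: the $E_2$ structure comes from the operad-with-multiplication structure on $\oEndb(B_{\aC})$, and the restriction must be a map of operads with multiplication, which again requires the fully-faithful isomorphisms. Consequently the definition $\THC(\phi):=\CC(Q\phi)$ does not make sense, and the rest of your argument has nothing to descend to $\Ho(\aS\Cat)^{DK}$.

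The paper's construction is built precisely to circumvent this. Given a DK-embedding $\phi\colon\aD\to\aC$, it does not try to map $\CC(\aC)$ to $\CC(\aD)$ directly; instead it forms the Keller-style linking category $\Cat_{\aF_{\phi}}$ on $O(\aC)\amalg O(\aD)$, into which both $\aC$ and $\aD$ include by \emph{strictly fully faithful} spectral functors. Proposition~\ref{prop:limitednaturality} (the limited functoriality you do not need to invent) gives honest $\oD_2$-algebra maps $\CC(\aC^{\Cell,\Omega})\from\CC(\Cat_{\tilde\phi})\to\CC(\aD^{\Cell,\Omega})$, and Theorem~\ref{thm:catzigzag} together with Example~\ref{ex:dke} (single centralizer condition for $\aD$ $\Leftrightarrow$ $\phi$ is a DK-embedding) shows the left-pointing arrow is a weak equivalence, yielding $\THC(\phi)$ in the homotopy category. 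Composition is then checked with the multi-step linking category $\Cat_{\phi_1,\ldots,\phi_n}$ and Corollary~\ref{cor:nsimp}. Your outline does correctly anticipate the need for a functorial pointwise relatively cofibrant, pointwise fibrant replacement (this is Proposition~\ref{prop:replace} and Definition~\ref{defn:THCobj}) and for verifying zigzag relations; but without the linking-category device there is no way to produce the individual arrows those zigzags are supposed to contain, so the plan cannot be carried out as written.
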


For any small spectral category $\aC$, we can construct a functorial
``thick closure'' $\perf(\aC)$~\cite[\S 5]{BM-tc} (after fixing a
cardinal bound); roughly speaking, this is
the full subcategory of spectral presheaves on $\aC$ generated under
finite homotopy colimits and retracts by $\aC$.  A spectral functor
$\phi \colon \aD\to \aC$ is a Morita equivalence when the induced
functor $\perf(\aC) \to \perf(\aD)$ is a DK-equivalence.  One reason
for interest in the Morita equivalences is that the Bousfield
localization of the category of small spectral categories at the
Morita equivalences is a model for the $\infty$-category of small
stable idempotent-complete $\infty$-categories~\cite[4.23]{BGT}.  The
following theorem, proved in Section~\ref{sec:pfme}, shows that $\THC$
descends to a functor on a subcategory of this localization.

\begin{main}\label{main:me}
If $\phi\colon \aD\to \aC$ is a Morita equivalence, then $\THC(\phi)\colon
\THC(\aC)\to \THC(\aD)$ is an isomorphism in the homotopy category of
$E_{2}$ ring spectra.
\end{main}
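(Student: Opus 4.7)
The plan is to reduce to DK-equivalences handled by Theorem~\ref{main:dkfunct}, using the Yoneda embedding $y_{\aC}\colon \aC\to \perf(\aC)$, which is a DK-embedding by the enriched Yoneda lemma. The key technical ingredient is a \emph{Morita invariance lemma} asserting that $\THC(y_{\aC})\colon \THC(\perf(\aC))\to \THC(\aC)$ is an isomorphism in the homotopy category of $E_{2}$ ring spectra. I would prove this by identifying $\THC(\aC)$ with the derived endomorphism ring spectrum $\dR\End_{\aC\otimes\aC^{\mathrm{op}}}(\aC)$ of the identity bimodule and invoking derived Morita theory: the derived bimodule categories of $\aC$ and $\perf(\aC)$ are equivalent, with identity bimodule matching identity bimodule. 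A bar or cotriple resolution recovers the McClure--Smith cosimplicial model $\CC$ from the endomorphism description, allowing the $E_{2}$ structure to be transported across the comparison.

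Given the lemma, for a Morita equivalence $\phi\colon \aD\to \aC$, form the zigzag
\[
\aD \overto{y_{\aD}} \perf(\aD) \overfrom{\phi^{*}} \perf(\aC) \overfrom{y_{\aC}} \aC
\]
in $\aS\Cat$, where $\phi^{*}$ denotes restriction of presheaves along $\phi$, which is a DK-equivalence by hypothesis. Applying $\THC$ through the functor of Theorem~\ref{main:dkfunct} yields a zigzag
\[
\THC(\aD) \from \THC(\perf(\aD)) \to \THC(\perf(\aC)) \from \THC(\aC)
\]
of maps of $E_{2}$ ring spectra. The middle arrow is an isomorphism by the DK-invariance part of Theorem~\ref{main:dkfunct}, and the outer two by the Morita invariance lemma. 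Composing yields the asserted isomorphism $\THC(\aC)\to \THC(\aD)$, which represents $\THC(\phi)$ by the naturality of the Yoneda embedding in spectral functors.

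The main obstacle is the Morita invariance lemma at the $E_{2}$ level. Morita invariance of Hochschild cohomology as a graded ring is classical and immediate from the $\dR\End$ description, but lifting the comparison to $E_{2}$ ring spectra defined via the McClure--Smith cosimplicial construction is delicate: one must either construct an explicit cosimplicial map (or zigzag) between $\CC(\aC)$ and $\CC(\perf(\aC))$ compatible with the McClure--Smith $\oD_{2}$-action, or factor the comparison through a third $E_{2}$ model in which both sides are visibly equivalent. This is where the argument depends essentially on the point-set functoriality results established earlier in the paper.
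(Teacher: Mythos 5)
Your structural reduction — factor a Morita equivalence $\phi\colon \aD\to\aC$ through the Yoneda embeddings into $\perf(\aD)$, $\perf(\aC)$ and a DK-equivalence between them — is close to what the paper implicitly does, but it is not the route the paper actually takes, and it introduces a naturality obligation the paper avoids. The paper applies Theorem~\ref{main:dcc} directly to the bimodule $\aF_\phi=\aC(\phi(-),-)$: by Example~\ref{ex:dke}, $\aF_\phi$ satisfies the single centralizer condition for $\aD$ (a Morita equivalence is a DK-embedding), and by the argument of Example~\ref{ex:perf} (or Example~\ref{ex:mc}), Morita equivalence of $\phi$ gives the single centralizer condition for $\aC$, hence the double centralizer condition. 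Theorem~\ref{thm:catzigzag} then says both legs of the zigzag $\CC(\aC^{\Cell,\Omega})\from \CC(\Cat_{\tilde\phi})\to\CC(\aD)$ are weak equivalences of $\oD_2$-algebras, and since this zigzag is $\THC(\phi)$ by Definition~\ref{defn:THCmorph}, the result follows with no further bookkeeping.

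Your proposal has a genuine gap in exactly the place you flag. Your ``Morita invariance lemma'' — that $\THC(y_{\aC})\colon\THC(\perf(\aC))\to\THC(\aC)$ is an $E_2$ isomorphism — is precisely what Example~\ref{ex:perf} plus Theorem~\ref{main:dcc} establish, via the double centralizer condition for $\aF_{y_{\aC}}$ and the $\Cat_{\aF}$ zigzag of Theorem~\ref{thm:catzigzag}; these are the concrete form of the ``explicit cosimplicial map (or zigzag)'' you say one must construct. Your sketch via $\dR\End_{\aC\sma\aC^{\op}}(\aC)$ and a bar/cotriple resolution does not constitute a proof: you would need to exhibit a point-set model carrying a McClure--Smith action and a comparison preserving that action, which is exactly what the paper's $\Cat_{\aF}$ construction supplies. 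Separately, your final assertion that the composed isomorphism ``represents $\THC(\phi)$ by the naturality of the Yoneda embedding'' is not free. The square relating $\phi$, the Yoneda embeddings, and the restriction functor commutes only up to a natural weak equivalence, not strictly, and you would need to show that $\THC$ — which is only defined on the homotopy category through the non-functorial $\Cat_{\aF}$ zigzag — respects such coherences. The paper sidesteps this entirely by working with $\aF_\phi$ itself rather than routing through $\perf$.
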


Theorems~\ref{main:wefunct} and~\ref{main:me} describe invariance
properties of $\THC$ analogous to the well-established invariance
properties of $THH$.  However, $\THC$ in 
fact has more general invariance properties.
For example, if $\aD$ is a small spectral subcategory of the category
of cofibrant-fibrant right $\aC$-modules that factors the Yoneda
embedding, then $\THC(\aC)\to \THC(\aD)$ is an isomorphism in the
homotopy category of $E_{2}$ ring spectra (see Example~\ref{ex:perf}
below).  The most general 
expression of this invariance we know can be expressed in terms of the
double centralizer condition, which is also a generalization of
derived Koszul duality.

Let $\aC$ and $\aD$ be small spectral categories and let $\aM$ be a
$(\aC,\aD)$-bimodule (a commuting left $\aC$-module and right
$\aD$-module structure; see Definition~\ref{defn:bibimod}
or~\ref{defn:sbimod} below). Then there are 
canonical maps in the categories of homotopical $(\aC,\aC)$-bimodules
and homotopical $(\aD,\aD)$-bimodules, respectively, 
\[
\aC\to \dR\Hom_{\aD^{\op}}(\aM,\aM),\qquad 
\aD\to \dR\Hom_{\aC}(\aM,\aM).
\]
A standard definition is that
$\aM$ satisfies the \term{double centralizer condition} when both
these maps are weak equivalences.  Working backward from this
terminology, we say that $\aM$ satisfies the \term{single centralizer
condition for $\aC$} when the first map (out of $\aC$) is a weak
equivalence and the \term{single centralizer
condition for $\aD$} when the second map (out of $\aD$) is a weak
equivalence.  The following is the spectral
version of the main theorem of Keller~\cite{Keller-DIH}; we prove it
in Section~\ref{sec:pfdcc}.

\begin{main}\label{main:dcc}
Let $\aC$ and $\aD$ be small spectral categories and $\aM$ a
$(\aC,\aD)$-bimodule that satisfies the single centralizer condition
for $\aD$; then there is a canonical map in the homotopy category of
$E_{2}$ ring spectra $\THC(\aC)\to\THC(\aD)$.  If $\aM$ satisfies
the double centralizer condition, then
$\THC(\aC)\to\THC(\aD)$ is an isomorphism in the homotopy category of
$E_{2}$ ring spectra.
\end{main}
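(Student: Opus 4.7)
The plan is to use the bimodule $\aM$ to construct a bridging spectral category $\aE$ that receives a canonical Yoneda DK-embedding from $\aC$ and, under the single centralizer condition for $\aD$, a second DK-embedding from $\aD$.  Applying the contravariant functoriality of Theorem~\ref{main:dkfunct} and inverting the $\aC$-side map then produces the canonical map of the theorem, and the double centralizer case is handled by running the same argument symmetrically.

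Concretely, I would let $\aE$ be a small spectral category DK-equivalent to the full spectral subcategory of cofibrant-fibrant $\aC$-modules on the objects $\aC(-,c)$ for $c\in\aC$ and cofibrant-fibrant replacements of the columns $\aM(-,d)$ for $d\in\aD$.  The enriched Yoneda lemma identifies $\aE(\aC(-,c),\aC(-,c'))\simeq\aC(c,c')$, producing a DK-embedding $Y\colon\aC\to\aE$, while the identification
\[
\aE(\aM(-,d),\aM(-,d'))\simeq \dR\Hom_{\aC}(\aM,\aM)(d,d')
\]
exhibits the single centralizer condition for $\aD$ as exactly the assertion that $d\mapsto \aM(-,d)$ defines a DK-embedding $F\colon\aD\to\aE$.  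Theorem~\ref{main:dkfunct} then produces maps $Y^{*}\colon\THC(\aE)\to\THC(\aC)$ and $F^{*}\colon\THC(\aE)\to\THC(\aD)$ in the homotopy category of $E_{2}$ ring spectra, and the canonical map is defined to be $F^{*}\circ(Y^{*})^{-1}$.

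The principal technical obstacle is to show that $Y^{*}$ is an isomorphism.  This is the Yoneda-factoring invariance underlying Example~\ref{ex:perf}, and it is strictly stronger than Theorem~\ref{main:me} because the columns $\aM(-,d)$ need not lie in $\perf(\aC)$.  I expect this to follow from a free-standing invariance lemma asserting that for any small subcategory $\aE$ of cofibrant-fibrant $\aC$-modules that contains the Yoneda image of $\aC$, the map $\THC(\aE)\to\THC(\aC)$ is an isomorphism in the homotopy category of $E_{2}$ ring spectra; the proof should bootstrap from Theorem~\ref{main:me} through a cofinality argument comparing the cosimplicial models $\CC(\aE)$ and $\CC(\aC)$ via the two-sided bar complex of the $\aC$-module structures on the added objects.

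For the double centralizer statement, I would run the construction symmetrically to produce a second bridging category $\aE'$ inside $\aD$-modules, with a Yoneda DK-embedding $Z\colon\aD\to\aE'$ and a DK-embedding $G\colon\aC\to\aE'$ produced by the single centralizer condition for $\aC$.  The same invariance lemma shows that $Z^{*}$ is an isomorphism, producing a map $G^{*}\circ(Z^{*})^{-1}\colon\THC(\aD)\to\THC(\aC)$.  The final step is a compatibility check: the tensor-adjunctions $-\otimes^{\dL}_{\aC}\aM$ and $\dR\Hom_{\aD^{\op}}(\aM,-)$ between $\aC$- and $\aD$-modules identify $\aE$ and $\aE'$ up to DK-equivalence and, under the double centralizer hypothesis, match the pairs $(Y,G)$ and $(F,Z)$ in the homotopy category, so that the two constructed maps are mutually inverse.
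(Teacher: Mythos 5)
Your proposal takes a different route from the paper's and contains a genuine gap that is not a routine fill-in. The paper uses Keller's gluing category $\Cat_{\aF}$, which has both $\aC$ and $\aD$ as strictly full subcategories, with \emph{trivial} mapping spectra from $\aC$-objects to $\aD$-objects; this produces an honest point-set zigzag $\CC(\aD)\from \CC(\Cat_{\aF})\to \CC(\aC)$ of $\oD_{2}$-algebras via Proposition~\ref{prop:limitednaturality}, and the upper-triangular structure is what makes the bar-resolution comparison in Theorem~\ref{thm:catzigzag} go: each single centralizer condition independently controls one arrow of the zigzag. Your bridging category $\aE$ is a full subcategory of $\aC$-modules, so its mapping spectra $\aE(\aM(-,d),\aC(-,c))$ are \emph{not} trivial; the triangular splitting of the bar complex that drives the paper's proof is unavailable, and Theorem~\ref{thm:catzigzag} does not apply to $\aE$ directly.

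The decisive gap is the invariance lemma you rely on: that $\THC(\aE)\to\THC(\aC)$ is an isomorphism for any small full subcategory $\aE$ of cofibrant-fibrant $\aC$-modules containing the Yoneda image. In the paper's architecture this is a corollary of the theorem being proved: Example~\ref{ex:perf} checks that the Yoneda bimodule satisfies the double centralizer condition, and then Theorem~\ref{main:dcc} gives the $\THC$-equivalence. Invoking it to prove Theorem~\ref{main:dcc} is therefore circular unless you supply an independent proof, and you correctly observe that it is strictly stronger than Morita invariance (the added objects $\aM(-,d)$ need not be perfect). There is no evident way to ``bootstrap from Theorem~\ref{main:me}''; the paper explicitly deduces Theorem~\ref{main:me} from Theorem~\ref{main:dcc}, and passing from perfect to arbitrary cofibrant-fibrant added objects is precisely where the resolution-and-$\Tot$ machinery of Theorem~\ref{thm:catzigzag} is doing its work. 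The closing compatibility check -- that the maps built symmetrically from $\aE$ and $\aE'$ are mutually inverse in the homotopy category of $E_{2}$ ring spectra -- is also nontrivial; the paper avoids it entirely because $\aC$ and $\aD$ both embed into the single category $\Cat_{\aF}$, so there is only one zigzag to analyze.
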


We deduce Theorem~\ref{main:string} in Section~\ref{sec:pfstring} as
an immediate corollary of the previous theorem.  
Dwyer-Greenlees-Iyengar~\cite[4.22]{DwyerGreenleesIyengar-Duality}
relates the double centralizer condition for the sphere spectrum $\bS$
as a $(\splus \Omega X,DX)$-bimodule to the Eilenberg-Moore spectral sequence;
Section~3 of~\cite{BM-Koszul} describes nice models of $DX$ and
$\splus \Omega X$ and explicitly proves the double centralizer
condition when $X$ is a simply connected finite cell
complex for a bimodule whose underlying spectrum is equivalent to
$\bS$. 

Theorems~\ref{main:dkfunct} and~\ref{main:me} have an
$\infty$-categorical extension.  Specifically, we prove the following
theorem in Section~\ref{sec:inftyfunct}.

\begin{main}\label{main:inf}
Let $\Cat^{\ex}$ denote the $\infty$-category of small stable
idempotent-complete $\infty$-categories and exact functors.  Then
$\THC$ extends to a functor from the
subcategory of $\Cat^{\ex}$ where the morphisms are fully-faithful
inclusions to the $\infty$-category of $E_2$ ring spectra.
\end{main}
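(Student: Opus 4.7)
The plan is to upgrade the homotopy-category-level statements of Theorems \ref{main:dkfunct} and \ref{main:me} to genuinely $\infty$-functorial statements by working at the point-set level. I would first restrict attention to the 1-category whose objects are pointwise relatively cofibrant and pointwise fibrant small spectral categories and whose morphisms are the DK-embeddings between them. On this subcategory, the topological Hochschild--Mitchell cosimplicial construction $\CC$ is strictly contravariantly functorial, and the construction of Section~\ref{sec:MS} produces a strict action of the McClure-Smith $E_{2}$ operad $\oD_{2}$ on $\Tot\,\CC$ that is natural for these morphisms. Taking nerves gives an $\infty$-functor from the underlying $\infty$-category into the $\infty$-category of $E_{2}$ ring spectra.

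Next, I would invoke the universal property of $\infty$-categorical localization: any $\infty$-functor that sends a class $W$ of morphisms to equivalences factors uniquely through the localization at $W$. By Theorem~\ref{main:dkfunct}, $\THC$ inverts DK-equivalences, and by Theorem~\ref{main:me}, it inverts Morita equivalences; therefore the $\infty$-functor above factors through the localization at both classes. In order to extend this $\infty$-functor off the pointwise cofibrant-fibrant subcategory (so that all small spectral categories with DK-embeddings up to Morita equivalence appear in the source), I would use a functorial pointwise cofibrant-fibrant replacement, which is developed in the earlier sections of the paper.

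The final identification uses the theorem of Blumberg-Gepner-Tabuada~\cite[4.23]{BGT} that the Morita localization of $\aS\Cat$ models the $\infty$-category $\Cat^{\ex}$. Under this identification, DK-embeddings of spectral categories correspond precisely to fully-faithful exact functors of small stable idempotent-complete $\infty$-categories, since the fully-faithful condition on mapping spectra is exactly the DK-embedding condition. Hence the domain obtained above is equivalent to the subcategory of $\Cat^{\ex}$ whose morphisms are the fully-faithful inclusions, yielding the desired $\THC$.

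The main obstacle is guaranteeing strict point-set functoriality of the $E_{2}$ structure, not merely of the underlying spectrum, along DK-embeddings of pointwise cofibrant-fibrant spectral categories. This requires the McClure-Smith action of $\oD_{2}$ on $\Tot\,\CC(\aC)$ to be assembled from operations natural in the spectral-category variable. The explicit combinatorial formulas of Section~\ref{sec:MS} involve only the face and degeneracy structure of the Hochschild--Mitchell cosimplicial object, both of which are strictly natural for restriction along full subcategory inclusions; so this step is ultimately a verification rather than a deep obstruction, but it is where most of the bookkeeping lives.
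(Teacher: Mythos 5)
Your proposal contains a genuine gap at the step you yourself identify as the ``main obstacle'' and then dismiss as bookkeeping. You claim that on the subcategory of pointwise relatively cofibrant-fibrant small spectral categories and DK-embeddings, $\CC$ is \emph{strictly} contravariantly functorial with a strictly natural $\oD_{2}$-action, so that applying nerves gives an $\infty$-functor that can then be localized. This is false. The point-set functoriality established in Section~\ref{sec:MS} (Proposition~\ref{prop:limitednaturality}) is only for \emph{strictly} fully faithful spectral functors --- those inducing isomorphisms $\aC(\phi(a),\phi(b))\iso \aD(a,b)$ on mapping spectra. A DK-embedding only gives weak equivalences, not isomorphisms, and the explicit formula for the restriction map $\oEndb(B_{\aC})(n)\to\oEndb(B_{\aD})(n)$ uses those isomorphisms in an essential way to pass between $\aC(\phi(d_i),\phi(d_j))$ and $\aD(d_i,d_j)$ in both the covariant and contravariant slots of $F(-,-)$. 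Without isomorphisms there is simply no map of spectra $\CC^n(\aC)\to\CC^n(\aD)$, let alone one that is a map of operads with multiplication. No choice of cofibrant-fibrant replacement fixes this, because the condition on $\phi$ is not about fibrancy or cofibrancy of the categories but about the failure of $\phi$ to be literally bijective on mapping spectra.

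The paper's proof takes a different route precisely to get around this: for a single DK-embedding $\phi$ one only gets a \emph{zigzag} of $\oD_{2}$-algebra maps $\CC(\aC^{\Cell,\Omega})\from\CC(\Cat_{\tilde\phi})\to\CC(\aD^{\Cell,\Omega})$ through the auxiliary category $\Cat_{\tilde\phi}$, whose inclusions of $\aC$ and $\aD$ \emph{are} strictly fully faithful. Composites of such zigzags only agree up to homotopy, so to get an $\infty$-functor one must produce coherent higher zigzags; this is exactly what the $\Cat_{\phi_1,\ldots,\phi_n}$ constructions of Definition~\ref{defn:nsimp} and Corollary~\ref{cor:nsimp} provide. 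Assembling these into an actual map of quasicategories into $N^{hc}L\aS[\oD_2]$ is nontrivial and relies on Lindsey's $\Zig(-,-)$ quasicategory together with Steimle's machinery for correcting a map of simplicial sets that preserves faces but not degeneracies. This is not bookkeeping; the paper even remarks explicitly that ``we do not get a point-set map of quasicategories.'' Your localization and Blumberg--Gepner--Tabuada identification steps at the end are fine and match the paper, but they sit on top of a foundation that does not exist as you have described it.
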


\subsection*{Conventions}

In this paper, $\aS$ denotes either the category of symmetric spectra
(of topological spaces), or the category of orthogonal spectra, or the
category of EKMM $S$-modules.  For brevity we call $\aS$ the
\term{category of spectra} and objects of $\aS$ \term{spectra}.  We
regard the \term{stable category} as the homotopy category obtained
from $\aS$ be formally inverting the weak equivalences.  (The words
``spectrum'' and ``spectra'' when used in this paper should not be
construed as refering to any other notion or category.)

\subsection*{Acknowledgments}
The authors thank Ralph Cohen, Zachery Lindsey, and Constantin
Teleman for useful conversations; we additionally thank Ralph Cohen
for suggesting this project.

\section{Bi-indexed spectra and the tensor-Hom adjunctions}
\label{sec:bis}

The purpose of this section is to establish technical foundations for
proving tensor-$\Hom$ adjunctions for modules over small spectral
categories.  To do this, we work here with the theory of
``bi-indexed'' spectra, which are like spectrally enriched directed
graphs but where the source and target vertices can 
be in different sets.  

\begin{defn}\label{defn:bis}
For sets $A$ and $B$, an $(A,B)$-spectrum $\aX$ consists of a choice of
spectrum $\aX(a,b)$ for each object $(a,b)$ of $A\times B$; we call
$(a,b)$ a \term{bi-index}.  A morphism
of $(A,B)$-spectra $\aX\to \aY$ consists of a map of spectra $\aX(a,b)\to
Y(a,b)$ for all bi-indexes $(a,b)\in A\times B$.  A bi-indexed spectrum $\aX$ is an
$(A,B)$-spectrum for some $A,B$; we define the \term{source} of $\aX$
(denoted $S(\aX)$) to
be $B$ and the \term{target} of $\aX$ to be $A$ (denoted $T(\aX)$).  If
$S(X)=S(Y)$ and $T(X)=T(Y)$, then the set of maps of bi-indexed spectra from $\aX$
to $\aY$ is the set of maps of $(T(\aX),S(\aX))$-spectra from $\aX$ to $\aY$;
otherwise, it is empty.  

For a bi-indexed spectrum $\aX$, let $\aX^{\op}$ denote the bi-indexed spectrum with
\begin{itemize}
\item $S(\aX^{\op})=T(\aX)$,
\item $T(\aX^{\op})=S(\aX)$, and
\item $\aX^{op}(s,t)=\aX(t,s)$ for all $(t,s)\in T(\aX)\times S(\aX)$.
\end{itemize}
\end{defn}

We have written and typically write generic bi-indexed spectra with
the target variable first and the source variable second; we refer to
this as the $TS$-indexing convention.  For the bi-indexed spectra
associated to small spectral categories (see Definition~\ref{defn:scat}
below), it is more usual to use the $ST$-indexing convention, writing
the source variable first and the target variable second, and we
follow this convention for spectral categories and their bimodules.
When it is unclear from the context which indexing is used, we add
a superscript $st$ or $ts$, so 
\[
\aX^{st}(a,b)=\aX^{ts}(b,a).
\]
We emphasize the distinction between $(-)^{st}$ and $(-)^{\op}$:
$(-)^{st}$ just reverses the notation of source and target, while
$(-)^{\op}$ reverses the notion of source and target.

As defined above, the category of bi-indexed spectra only admits maps
between objects whose source sets agree and target sets agree and so
it is sometimes useful to alter these sets.

\begin{defn}\label{defn:restriction}
Given functions $f\colon A'\to
A$, $g\colon B'\to B$ and an $(A,B)$-spectrum $\aX$, define the
\term{restriction of $\aX$ along $(f,g)$} to be the $(A',B')$-spectrum
$R_{f,g}\aX$ where
\[
(R_{f,g}\aX)(a',b')=\aX(f(a'),g(b'))
\]
for all $(a',b')\in A'\times B'$.  We define the \term{target
restriction of $\aX$ along $f$} and the \term{source
restriction of $\aX$ along $g$} to be the $(A',B)$-spectrum $T_{f}\aX$
and $(A,B')$-spectrum $S_{g}\aX$ where
\[
(T_{f}\aX)(a',b)=\aX(f(a'),b),\qquad (S_{g}\aX)(a,b')=\aX(a,g(b'))
\]
for all $(a',b)\in A'\times B$, $(a,b')\in A\times B'$.
\end{defn}

Since bi-indexed spectra are determined by their constituent spectra
on each bi-index, we have
\[
T_{g}S_{f} = R_{f,g} = S_{f}T_{g}
\]
and for $f'\colon A''\to A'$ and $g'\colon B''\to B'$,
\[
S_{f'}S_{f} = S_{f\circ f'}, \qquad 
R_{(f',g')}R_{f,g} = R_{f\circ f',g\circ g'},
\qquad T_{g'}T_{g} = T_{g\circ g'}.
\]
We could use the preceding definition to define a more sophisticated
category of bi-indexed sets incorporating non-identity maps on source
and target sets, but the advantage of the current approach is that
this category of bi-indexed spectra has a partial monoidal structure,
constructed as follows.

\begin{cons}\label{cons:tensor}
For $\aX$ an $(A,B)$-spectrum and $\aY$ a $(B,C)$-spectrum, define
$\aX\otimes \aY$ to be the $(A,C)$-spectrum
\[
(\aX\otimes \aY)(a,c) = \bigvee_{b\in B}\aX(a,b)\sma \aY(b,c).
\]
For $\aZ$ a $(C,D)$-spectrum, the associativity isomorphism for the
smash product and the universal property of coproduct induce an
associativity isomorphism 
\[
\alpha_{\aX,\aY,\aZ}\colon (\aX\otimes \aY)\otimes \aZ\iso \aX\otimes (\aY\otimes \aZ).
\]
For a set $A$, let $\bS_{A}$ be the $(A,A)$-spectrum where
\[
\bS(a_{1},a_{2})=\begin{cases}
*&a_{1}\neq a_{2}\\
\bS&a_{1}=a_{2}.
\end{cases}
\]
The left and right unit isomorphism for the smash product induce left
and right unit isomorphisms
\[
\eta^{\ell}_{\aX}\colon \bS_{A}\otimes \aX\iso \aX\qquad \text{and}\qquad 
\eta^{r}_{\aX}\colon \aX\otimes \bS_{B}\iso \aX.
\]
\end{cons}

The coherence of associativity and unit isomorphisms for spectra then
imply the following proposition.

\begin{prop}\label{prop:pmc}
The category of bi-indexed spectra is a partial monoidal category
under $\otimes$:  The object $\aX\otimes \aY$ is defined when $T(\aY)=S(\aX)$,
and whenever defined, the following associativity
\[
\xymatrix@C-5.25pc{%
&&(\aW\otimes \aX)\otimes (\aY\otimes \aZ)
\ar[drr]^-{\quad\alpha_{\aW,\aX,\aY\otimes \aZ}}\\
((\aW\otimes \aX)\otimes \aY)\otimes \aZ
\ar[urr]^-{\alpha_{\aW\otimes \aX,\aY,\aZ}\quad}
\ar[dr]_-{\alpha_{\aW,\aX,\aY}\otimes \id_{\aZ}\quad}
&&&&
\aW\otimes (\aX\otimes (\aY\otimes \aZ))\\
&(\aW\otimes (\aX\otimes \aY))\otimes \aZ
\ar[rr]_-{\alpha_{\aW,\aX\otimes \aY,\aZ}
\vrule height1em width0pt depth0pt}
&&
\aW\otimes ((\aX\otimes \aY)\otimes \aZ)
\ar[ur]_-{\quad\id_{\aW}\otimes \alpha_{\aX,\aY,\aZ}}
}
\]
and unit
\[
\xymatrix@C-1.25pc{%
(\aX\otimes \bS_{B})\otimes \aY
\ar[rr]^{\alpha_{\aX,\bS_{B},\aY}}
\ar[dr]_{\eta^{r}_{\aX}}&&
\aX\otimes (\bS_{B}\otimes \aY)
\ar[dl]^{\eta^{\ell}_{\aY}}&&
\\
&\aX\otimes \aY
}
\]
diagrams commute.
\end{prop}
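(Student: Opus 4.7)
The plan is to reduce each coherence diagram to a bi-index by bi-index check and then invoke the corresponding coherence axiom for the smash product of spectra, since a map of bi-indexed spectra with matching source and target is literally a family of maps of spectra indexed by the bi-indices, and so equality of two such maps can be tested bi-index by bi-index. So for the pentagon I only need to verify commutativity after evaluating at a bi-index $(a,e)$, where $\aW$ is an $(A,B)$-spectrum, $\aX$ a $(B,C)$-spectrum, $\aY$ a $(C,D)$-spectrum, and $\aZ$ a $(D,E)$-spectrum; likewise, the unit triangle need only be checked at $(a,c)$ for $\aX$ an $(A,B)$-spectrum and $\aY$ a $(B,C)$-spectrum.

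For the pentagon, I would unfold each of the five vertices using Construction~\ref{cons:tensor}. Every vertex, evaluated at $(a,e)$, expands by iterating the definition to a wedge indexed by $B\times C\times D$ of some parenthesization of the four-fold smash $\aW(a,b)\sma\aX(b,c)\sma\aY(c,d)\sma\aZ(d,e)$. The associativity isomorphism $\alpha_{-,-,-}$ of Construction~\ref{cons:tensor} on bi-indexed spectra is built bi-index-wise from the associativity isomorphism for the smash product by taking wedges, using that smash distributes over wedge and that coproducts commute with each other. Thus each edge of the pentagon for bi-indexed spectra, when evaluated at $(a,e)$, becomes the wedge over $B\times C\times D$ of the corresponding edge of the pentagon for $\aW(a,b)$, $\aX(b,c)$, $\aY(c,d)$, $\aZ(d,e)$ in $\aS$. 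Commutativity then follows from the pentagon coherence for the smash product, wedged over $B\times C\times D$.

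For the unit triangle, evaluation at $(a,c)$ of $(\aX\otimes \bS_B)(a,b)=\bigvee_{b'\in B}\aX(a,b')\sma\bS_B(b',b)$ collapses, since $\bS_B(b',b)=*$ for $b'\neq b$, to $\aX(a,b)\sma\bS$; an analogous collapse applies to $\bS_B\otimes \aY$. After this collapse, all three vertices of the triangle, evaluated at $(a,c)$, identify with $\bigvee_{b\in B}\aX(a,b)\sma\bS\sma\aY(b,c)$ (with $\bS$ appearing in the left, middle, or right slot), and the edges are the wedges of the unit and associativity isomorphisms for the smash product. The triangle thus reduces bi-index-wise to the unit triangle axiom for $\aS$.

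I do not anticipate a substantive obstacle, since the verification is purely formal once the reduction to bi-indices and the bookkeeping of wedges and smash products is under control; the main subtlety is simply to confirm that the associativity and unit isomorphisms constructed in Construction~\ref{cons:tensor} genuinely are the wedges (over the relevant indexing sets) of the smash-product associativity and unit isomorphisms, together with the evident distributivity isomorphisms between iterated smash products of wedges. All of these latter isomorphisms are coherent in any symmetric monoidal category with coproducts over which the tensor distributes, which is the case for $\aS$.
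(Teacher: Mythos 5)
Your proposal is correct and takes essentially the same approach as the paper, whose entire justification for this proposition is the single preceding sentence invoking coherence of the associativity and unit isomorphisms for spectra; you simply make explicit the bi-index-wise reduction to the smash-product pentagon and triangle. The one subtlety, which you correctly flag, is that the reduction also requires coherence of the distributivity isomorphisms relating iterated smash products of wedges (so that each vertex of the pentagon really does identify, compatibly with all the edges, with a wedge over $B\times C\times D$ of parenthesized four-fold smashes); this is standard since $\sma$ preserves coproducts in each variable, but it is strictly more than the bare pentagon and triangle axioms for $(\aS,\sma,\bS)$.
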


The tensor product has two partially defined right adjoints and we
also construct a third closely related functor.

\begin{cons}\label{cons:homlrb}
If $\aX$
is an $(A,B)$-spectrum, $\aY$ is an $(A,C)$-spectrum, and $\aZ$ is a
$(D,B)$-spectrum, we define the $(B,C)$-spectrum $\Hom^{\ell}(\aX,\aY)$ as
\[
(\Hom^{\ell}(\aX,\aY))(b,c)=\prod_{a\in A} F(\aX(a,b),\aY(a,c))
\]
and the $(D,A)$-spectrum $\Hom^{r}(\aX,\aZ)$ as
\[
(\Hom^{r}(\aX,\aZ))(d,a)=\prod_{b\in B} F(\aX(a,b),\aZ(d,b))
\]
(where $F$ denotes the function spectrum construction, adjoint to the
smash product).  For $\aX'$ an $(A,B)$-spectrum, we define the
spectrum $\Hom^{b}(\aX,\aX')$ as
\[
\Hom^{b}(\aX,\aX')=\prod_{(a,b)\in A\times B}F(\aX(a,b),\aX'(a,b)).
\]
\end{cons}

We note that $\Hom^{b}$ provides a \term{partial spectral enrichment} of bi-indexed
spectra: when $\Hom^{b}(\aX,\aX')$ is defined, maps
of spectra from $\bS$ into $\Hom^{b}(\aX,\aX')$ are canonically in 
one-to-one correspondence with maps of bi-indexed spectra from $\aX$ to $\aX'$,
and when $\Hom^{b}(\aX,\aX')$ is not defined, the set of maps of
bi-indexed spectra from $\aX$ to $\aX'$ is empty.
An easy check of definitions shows the following adjunction property.

\begin{prop}\label{prop:bisadj}
Let $\aX$ be an $(A,B)$-spectrum, $\aY$ a $(B,C)$-spectrum, and $\aZ$
an $(A,C)$-spectrum.  Then there are canonical isomorphisms of spectra
\[
\Hom^{b}(\aX,\Hom^{r}(\aY,\aZ))\iso
\Hom^{b}(\aX\otimes \aY,\aZ)\iso
\Hom^{b}(\aY,\Hom^{\ell}(\aX,\aZ)).
\]
\end{prop}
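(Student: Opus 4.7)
The plan is to prove the proposition by unpacking all three expressions on individual bi-indices and reducing them to a common triple-product of function spectra.

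First I would compute the middle term. Since $\aX \otimes \aY$ is an $(A,C)$-spectrum with wedge summands indexed by $b \in B$, and since the function spectrum construction $F$ takes wedges in its first variable to products, one gets
\[
\Hom^{b}(\aX\otimes \aY,\aZ) \iso \prod_{(a,c)\in A\times C} \prod_{b\in B} F\bigl(\aX(a,b)\sma \aY(b,c),\,\aZ(a,c)\bigr),
\]
which by commutation of iterated products I rewrite as a single product over $(a,b,c)\in A\times B\times C$. This is the ``target'' object I will match against.

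Next I would compute the outer terms. By Definition~\ref{cons:homlrb}, $\Hom^{r}(\aY,\aZ)$ is an $(A,B)$-spectrum (matching $\aX$), and unpacking gives
\[
\Hom^{b}(\aX,\Hom^{r}(\aY,\aZ)) \iso \prod_{(a,b)} F\Bigl(\aX(a,b),\,\prod_{c} F(\aY(b,c),\aZ(a,c))\Bigr).
\]
Applying the standard isomorphism $F(W, \prod_{c} V_{c}) \iso \prod_{c} F(W, V_{c})$ for spectra, then the tensor-Hom adjunction $F(W, F(W',V)) \iso F(W\sma W',V)$, and finally reindexing the products, yields the same triple product. The argument for $\Hom^{b}(\aY,\Hom^{\ell}(\aX,\aZ))$ is symmetric: $\Hom^{\ell}(\aX,\aZ)$ is a $(B,C)$-spectrum (matching $\aY$), and the same chain of isomorphisms produces $\prod_{(a,b,c)} F(\aY(b,c)\sma \aX(a,b),\aZ(a,c))$, which matches the target after inserting the symmetry isomorphism for the smash product.

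The main step is really just to verify that all the bi-indexing bookkeeping works out so that each $\Hom$ construction above is defined (the source/target sets agree as required), and to check that the composite isomorphisms built from the standard adjunction, the product-compatibility of $F$, and the symmetry of the smash product are \emph{canonical} (independent of any choices). There is no genuine obstacle here: the proposition is a formal consequence of the corresponding isomorphisms in $\aS$ together with the fact that products distribute over products and that $F$ converts wedges to products in its first slot. I would present the proof as a single display of equalities, then remark on naturality in $\aX$, $\aY$, and $\aZ$ (which is automatic from the pointwise construction) for use in the adjunction arguments of the next section.
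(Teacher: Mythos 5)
Your proposal is correct and is exactly the ``easy check of definitions'' that the paper offers in lieu of a written-out proof: unpack each side pointwise, use that $F$ turns wedges into products in its first slot and commutes with products in its second, apply the standard smash/$F$ adjunction, and regroup the triple product. The one point worth retaining in the write-up is the symmetry isomorphism needed on the $\Hom^{\ell}$ side, which you correctly flag.
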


When $S(\aX)=T(\aX)=O$ for some set $O$, $\aX$ is precisely a
small spectral $O$-graph (with the reverse convention on the order of
variables, i.e., with the $TS$-indexing convention); the tensor product above restricts to a monoidal 
product on $O$-graphs and it is well known that
the category of small spectral $O$-categories is isomorphic to the
category of monoids for this monoidal product
(see~\cite[\S6.2]{SSMonoidalEq}; compare~\cite[\S
II.7]{MacLane-Categories}).  We say more about this below in
Section~\ref{sec:rewrite}.  Partly to avoid confusion with the
indexing conventions, we will call the monoids under this convention bi-indexed
ring spectra.

\begin{defn}\label{defn:birs}
A \term{bi-indexed ring spectrum} is a monoid for $\otimes$ in
bi-indexed spectra.  For a bi-indexed ring spectrum $\aX$,
the \term{object set} $O(\aX)$ is $S(\aX)=T(\aX)$.
\end{defn}

Note that with the above definition, the natural morphisms for
bi-indexed ring spectra only allow maps between small spectral categories
with the same object sets.  Instead of defining the analogue of
spectral functors directly, it is more convenient to work with bimodules.

\begin{defn}\label{defn:bibimod}
Let $\aX$ and $\aY$ be bi-indexed ring spectra.  An
$(\aX,\aY)$-bimodule consists of a bi-indexed spectrum $\aM$ together
with a left $\aX$-object structure (for $\otimes$) and a commuting
right $\aY$-object structure.  We write $\Mod_{\aX,\aY}$ for the category of
$(\aX,\aY)$-bimodules. 
\end{defn}

Commuting here means that for the left-object structure $\xi\colon
\aX\otimes \aM\to \aM$ and the right object structure $\upsilon\colon
\aM\otimes \aY\to \aM$, the diagram
\[
\xymatrix@C-1pc{%
(\aX\otimes \aM)\otimes \aY\ar[rr]^{\alpha_{\aX,\aM,\aY}}
\ar[d]_{\xi\otimes \id_{\aY}}
&&\aX\otimes (\aM\otimes \aY)
\ar[d]^{\id_{\aX}\otimes \upsilon}\\
\aM\otimes \aY\ar[dr]_{\upsilon}&&\aX\otimes \aM\ar[dl]^{\xi}\\
&\aM
}
\]
commutes.  The left and right object structures 
require (and are defined by the requirement that) the associativity
\[
\xymatrix@R-1pc@C+.75pc{%
(\aX\otimes \aX)\otimes \aM\ar[r]^-{\alpha_{\aX,\aX,\aM}}\ar[d]_-{\mu_{\aX}\otimes \id_{\aM}}
&\aX\otimes (\aX\otimes \aM)\ar[r]^-{\id_{\aX}\otimes \xi}
&\aX\otimes \aM\ar[d]^-{\xi}\\
\aX\otimes \aM\ar[rr]_-{\xi}&&\aM\\
(\aM\otimes \aY)\otimes \aY\ar[r]^-{\alpha_{\aM,\aY,\aY}}\ar[d]_-{\upsilon\otimes \id_{\aY}}
&\aM\otimes (\aY\otimes \aY)\ar[r]^-{\id_{\aM}\mu_{\aY}}
&\aM\otimes \aY\ar[d]^-{\upsilon}\\
\aM\otimes \aY\ar[rr]_-{\upsilon}&&\aM
}
\]
and unit
\[
\xymatrix@R-1pc@C+.75pc{%
\bS_{O(\aX)}\otimes \aM\ar[r]^-{\eta_{\aX}\otimes \id_{\aM}}\ar[dr]_-{\eta^{\ell}}
&\aX\otimes \aM\ar[d]^-{\xi}
&
\aM\otimes \bS_{O(\aY)}\ar[r]^-{\id_{\aM}\otimes \eta_{\aY}}\ar[dr]_-{\eta^{r}}
&\aM\otimes \aY\ar[d]^-{\upsilon}
\\
&\aM&&\aM
}
\]
diagrams commute, where $\mu_{\aX},\mu_{\aY}$ denote the multiplications
and $\eta_{\aX},\eta_{\aY}$ denote the units for the monoid structures
on $\aX$ and $\aY$.

Given a function $\phi \colon O(\aY)\to O(\aX)$, we obtain an
$(O(\aX),O(\aY))$-spectrum $S_{\phi}\aX=\aX(-,\phi(-))$, which has a canonical
left $\aX$-object structure, given by the monoid structure of $\aX$.
We explain in Section~\ref{sec:rewrite} why the following definition
captures the correct notion of spectral functor.

\begin{defn}\label{defn:bifunc}
Let $\aX$ and $\aY$ be bi-indexed ring spectra.  A \term{spectral
functor} $\phi\colon \aY\to \aX$ consists of a function
$\phi\colon O(\aY)\to O(\aX)$ and an $(\aX,\aY)$-bimodule structure on
the left $\aX$-object $S_{\phi}\aX=\aX(-,\phi(-))$.
\end{defn}

The Hochshild-Mitchell construction requires a version of $\Hom^{b}$
``over'' a pair of small spectral categories and the adjunction of
Proposition~\ref{prop:bisadj} suggests the utility of analogues of
$\otimes$, $\Hom^{\ell}$, and $\Hom^{r}$ over small spectral
categories.  

\begin{cons}
Let $\aX$, $\aY$, and $\aZ$ be bi-indexed ring spectra.
If $\aM$ is an $(\aX,\aY)$-bimodule and $\aN$ is a
$(\aY,\aZ)$-bimodule, then we define the $(\aX,\aZ)$-bimodule
$\aM\otimes_{\aY}\aN$ to be the usual coequalizer
\[
\xymatrix@C-1pc{%
\aM\otimes \aY\otimes \aN
\ar@<-.5ex>[r]\ar@<.5ex>[r]
&\aM\otimes \aN\ar[r]
&\aM\otimes_{\aY}\aN
}
\]
of the left and right $\aY$-actions.
If $\aM$ is an $(\aX,\aY)$-bimodule and $\aP$ is an $(\aX,\aZ)$-bimodule,
we define the $(\aY,\aZ)$-bimodule $\Hom^{\ell}_{\aX}(\aM,\aP)$ to be the usual
equalizer
\[
\xymatrix@C-1pc{%
\Hom^{\ell}_{\aX}(\aM,\aP)\ar[r]
&\Hom^{\ell}(\aM,\aP)\ar@<.5ex>[r]\ar@<-.5ex>[r]
&\Hom^{\ell}(\aX\otimes \aM,\aP)
}
\]
where one map is induced by the left $\aX$-action on $\aM$ and the
other map is composite of the left $\aX$-action on
$\aP$ and the map
\[
\Hom^{\ell}(\aM,\aP)
\to \Hom^{\ell}(\aX\otimes \aM,\aX\otimes \aP)
\]
adjoint to the map
\[
\aX\otimes \aM \otimes \Hom^{\ell}(\aM,\aP)\to
\aX\otimes \aP
\]
that applies the counit of the $\aM\otimes (-)$, $\Hom(\aM,-)$
adjunction. 
If $\aM$ is an $(\aX,\aY)$-bimodule and $\aQ$ is a $(\aZ,\aY)$-bimodule,
we define the $(\aZ,\aX)$-bimodule $\Hom^{r}_{\aY}(\aM,\aQ)$ to be the usual
equalizer
\[
\xymatrix@C-1pc{%
\Hom^{r}_{\aY}(\aM,\aQ)\ar[r]
&\Hom^{r}(\aM,\aQ)\ar@<.5ex>[r]\ar@<-.5ex>[r]
&\Hom^{r}(\aM\otimes \aY,\aQ)
}
\]
using the analogous pair of maps for $\Hom^{r}$.
If $\aM$ is an $(\aX,\aY)$-bimodule and $\aM'$ is an $(\aX,\aY)$-bimodule,
we define the spectrum $\Hom^{b}_{\aX,\aY}(\aM,\aM')$ to be the usual
equalizer
\[
\xymatrix@C-1pc{%
\Hom^{b}_{\aX,\aY}(\aM,\aM')\ar[r]
&\Hom^{b}(\aM,\aM')\ar@<.5ex>[r]\ar@<-.5ex>[r]
&\Hom^{b}(\aX\otimes \aM\otimes \aY,\aM')
}
\]
with the analogous pair of maps for $\Hom^{b}$.
\end{cons}

An easy check shows that the spectrum of maps $\Hom^{b}_{\aX,\aY}$
provides a spectral enrichment of the category of
$(\aX,\aY)$-bimodules. 

Proposition~\ref{prop:bisadj} now generalizes to the following
proposition.  The proof is again purely formal.

\begin{prop}\label{prop:bimadj}
Let $\aX$, $\aY$, and $\aZ$ be bi-indexed ring spectra.  Let $\aM$ be
an $(\aX,\aY)$-bimodule, let $\aN$ be a $(\aY,\aZ)$-bimodule, and let
$\aP$ be an $(\aX,\aZ)$ bimodule.  Then there are canonical
isomorphisms of spectra
\begin{align*}
&\hspace{-3em}\Hom^{b}_{\aX,\aZ}(\aM\otimes_{\aY} \aN,\aP)\hspace{-3em}\\
\Hom^{b}_{\aX,\aY}(\aM,\Hom^{r}_{\aZ}(\aN,\aP))\iso&&
\iso\Hom^{b}_{\aY,\aZ}(\aN,\Hom^{\ell}_{\aX}(\aM,\aP)).
\end{align*}
\end{prop}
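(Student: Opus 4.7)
The plan is to deduce both isomorphisms from the basic (unenriched) tensor-$\Hom$ adjunction of Proposition~\ref{prop:bisadj} by unfolding each bimodule-enriched $\Hom$ and relative tensor as an equalizer or coequalizer of bi-indexed-spectrum constructions and then matching these (co)equalizers term-by-term.

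I would concentrate on the second isomorphism $\Hom^{b}_{\aX,\aZ}(\aM\otimes_{\aY}\aN,\aP)\iso \Hom^{b}_{\aY,\aZ}(\aN,\Hom^{\ell}_{\aX}(\aM,\aP))$; the first is then symmetric, obtained by swapping $\Hom^{\ell}$ with $\Hom^{r}$ and interchanging the roles of the left $\aX$- and right $\aZ$-actions. Using the definition of $\aM\otimes_{\aY}\aN$ as a coequalizer together with the fact that $\Hom^{b}$ sends colimits in its first variable to limits, the left side is the joint equalizer
\[
\xymatrix@C-1pc{
\Hom^{b}(\aM\otimes\aN,\aP)\ar@<.6ex>[r]\ar@<-.6ex>[r]
&\Hom^{b}(\aX\otimes\aM\otimes\aN,\aP)\times\Hom^{b}(\aM\otimes\aN\otimes\aZ,\aP)\times\Hom^{b}(\aM\otimes\aY\otimes\aN,\aP)
}
\]
of three parallel pairs of maps, recording compatibility with the outer left $\aX$-action, the outer right $\aZ$-action, and the middle $\aY$-action, respectively. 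The right side similarly unfolds as the joint equalizer obtained by combining the $\aY$- and $\aZ$-action compatibility pairs defining $\Hom^{b}_{\aY,\aZ}(\aN,-)$ with the $\aX$-action compatibility pair internal to $\Hom^{\ell}_{\aX}(\aM,\aP)$.

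Applying Proposition~\ref{prop:bisadj} produces a natural isomorphism $\Hom^{b}(\aM\otimes\aN,\aP)\iso\Hom^{b}(\aN,\Hom^{\ell}(\aM,\aP))$ of the ambient objects being equalized. I would then check that each of the three parallel pairs on the left corresponds under this isomorphism to a parallel pair on the right: the right $\aZ$-action pair matches directly via naturality in $\aP$; the right $\aY$-on-$\aM$ and left $\aY$-on-$\aN$ pair corresponds to the pair defining $\Hom^{b}_{\aY,\aZ}(\aN,-)$ via naturality in $\aN$; and the left $\aX$-action pair on the outside of $\aM\otimes\aN$ corresponds (via naturality in $\aM$ of the adjunction) to the pair internal to $\Hom^{\ell}_{\aX}(\aM,\aP)$. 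Each correspondence is a direct diagram chase using only the explicit formulas for the action maps and the counit of the $\aM\otimes(-)\dashv\Hom^{\ell}(\aM,-)$ adjunction. Since limits commute with limits, the two joint equalizers are canonically identified, yielding the asserted isomorphism.

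The main obstacle is purely bookkeeping: one must track the $TS$- versus $ST$-indexing conventions and the left/right side on which each structure map acts so that Proposition~\ref{prop:bisadj} is invoked on the correct variable. Once the three parallel pairs are written down explicitly with the conventions fixed, the required matchings follow immediately from the naturality of the basic adjunction and from the defining diagrams of a bimodule given after Definition~\ref{defn:bibimod}; no further input is needed.
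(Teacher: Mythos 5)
Your argument is correct and is exactly the formal reduction the paper has in mind: the paper does not write out a proof of this proposition but states only that ``the proof is again purely formal,'' and your strategy of unfolding both sides as joint equalizers and then transporting them term by term through the unenriched adjunction of Proposition~\ref{prop:bisadj} is precisely what that formality amounts to. One small bookkeeping point worth noting: the paper defines $\Hom^{b}_{\aX,\aZ}(-,\aP)$ via a single equalizer pair landing in $\Hom^{b}(\aX\otimes(-)\otimes\aZ,\aP)$, whereas your three-pair decomposition implicitly splits this into separate $\aX$- and $\aZ$-compatibility pairs; precomposing with the unit maps $\bS_{O(\aX)}\to\aX$ and $\bS_{O(\aZ)}\to\aZ$ shows these define the same subobject of $\Hom^{b}(\aM\otimes\aN,\aP)$, but since the splitting is what makes the matching with the three pairs on the other side transparent, it deserves a line of justification if the argument is written out in full.
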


We note that $\otimes$ and $\otimes_{\aY}$ commute with target
restriction (Definition~\ref{defn:restriction}) on the first variable
and source restriction on the second variable; $\Hom^{\ell}$ and
$\Hom^{\ell}_{\aX}$ convert source restriction on the first variable
to target restriction and preserves source restriction on the second
variable.  Likewise, $\Hom^{r}$ and $\Hom^{r}_{\aY}$ convert target
restriction on the first variable to source restriction and preserve
target restriction on the second variable.

The balanced tensor product produces the composition of spectral
functors for the definition of spectral functors
(Definition~\ref{defn:bifunc}) above.  Given a spectral functor $\phi$
from 
$\aY$ to $\aX$ and a spectral functor $\theta$ from $\aZ$ to $\aY$, using
the $(\aX,\aY)$-bimodule structure on $S_{\phi}\aX$ inherent in
$\phi$, we can make sense of 
the tensor product over $\aY$ on the right and construct a map of left
$\aX$-objects 
\[
S_{\phi}\aX\otimes_{\aY}S_{\theta}\aY \to S_{\phi\circ \theta}\aX.
\]
This map is an isomorphism because $\otimes_{\aY}$ commutes with
source restriction in the second variable; intrinsically, for every
fixed $x\in O(\aX)$ and $z\in O(\aZ)$, the diagram
\[
\xymatrix@C-1pc@R-2pc{%
\coprod\limits_{y_{0},y_{1}\in O(\aY)}\hspace{-2ex}
\aX(x,\phi(y_{0}))\sma \aY(y_{0},y_{1})\sma \aY(y_{1},\theta(z))
\ar@<-.5ex>[r]\ar@<.5ex>[r]
&\coprod\limits_{y\in O(\aY)}\hspace{-1ex}
\aX(x,\phi(y))\sma \aY(y,\theta (z))\\
&\longrightarrow\aX(x,\phi(\theta(z)))
}
\]
is a split coequalizer.  Using the isomorphism to give
$S_{\phi\circ\theta}\aX$ a right $\aZ$-action makes it an
$(\aX,\aZ)$-bimodule.  We define 
the composite of the spectral functors $\phi \circ \theta$ to consist of the
object function $\phi \circ \theta$ and this bimodule structure on $S_{\phi
\circ \theta}\aX$.

\section{Small spectral categories and the tensor-Hom adjunctions}
\label{sec:rewrite}

This section translates the work from the previous section to the
framework of small spectral categories.  When working in this
framework, we use the $ST$-indexing convention as this is standard in
this context.  We begin by reviewing the
definitions.

\begin{defn}\label{defn:scat}
A \term{small spectral category} is a small category enriched over
spectra. It consists of: 
\begin{enumerate}
\item a set of objects $O(\aC)$,
\item a spectrum $\aC(a,b)$ for each pair of objects $a,b\in O(\aC)$,
\item a unit map $\bS\to \aC(a,a)$ for each object $a \in O(\aC)$, and
\item a composition map $\aC(b,c)\sma \aC(a,b)\to \aC(a,c)$ for each
triple of objects $a,b,c\in O(\aC)$,
\end{enumerate}
satisfying the usual associativity and unit properties.  A
\term{strict morphism} $\aC\to \aC'$ of small spectral categories with the
same object set consists of a map of spectra $\aC(a,b)\to \aC'(a,b)$
for every pair of objects $a,b\in O(\aC)=O(\aC')$ that commutes with
the unit and composition maps; there are no strict morphisms between
small spectral categories with different object sets.
\end{defn}

We have inverse functors between the category of bi-indexed ring
spectra and small spectral categories (with strict morphisms) defined
as follows.  For a bi-indexed ring spectrum $\aX$, let $C_{\aX}$ be
the small spectral category defined by setting
\begin{enumerate}
\item the object set $O(C_{\aX})=O(\aX)$,
\item the mapping spectra $C_{\aX}(a,b)=\aX^{st}(a,b)=\aX^{ts}(b,a)$ for all $a,b\in O(C_{\aX})$,
\item the unit $\bS\to C_{\aX}(a,a)$ to be the map induced by the
monoid structure unit
$\bS_{O(\aX)}\to \aX$ for all $a\in O(C_{\aX})$, and
\item the composition $C_{\aX}(b,c)\sma C_{\aX}(a,b)\to C_{\aX}(a,c)$
to be the map $\aX(c,b)\sma \aX(b,a)\to \aX(c,a)$ that appears as a
wedge summand in the monoid structure multiplication $\aX\otimes \aX\to \aX$.
\end{enumerate}
Similarly, for a small spectral category $\aC$, we define a bi-indexed
ring spectrum $B_{\aC}$ with  the same object set by taking
$B_{\aC}(a,b)=\aC(b,a)$ and the obvious unit and multiplication.
These assignments are evidently functorial.

\begin{prop}\label{prop:iso}
The functors $C$ and $B$ above are inverse isomorphisms of categories
between the category of bi-indexed ring spectra and the category of
small spectral categories (with strict morphisms).
\end{prop}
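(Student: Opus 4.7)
The plan is a direct unwinding of definitions. The two functors $B$ and $C$ each rename bi-indices via the swap $(a,b)\leftrightarrow(b,a)$, so on objects it suffices to verify: (i) $B\circ C$ and $C\circ B$ act as the identity on the underlying indexed collections of spectra (because $\aX^{st}(a,b)=\aX^{ts}(b,a)$ and swapping twice returns to the original indexing), and (ii) the monoid structure on $\aX$ matches the unit and composition on $C_{\aX}$ after the swap. Part (i) is formal. For part (ii), I would simply read off that the wedge summand of $\mu\colon \aX\otimes \aX\to \aX$ indexed by $b\in O(\aX)$ is, by definition of $\otimes$, the map $\aX(c,b)\sma \aX(b,a)\to \aX(c,a)$ used to define composition in $C_{\aX}$, and that the unit $\bS_{O(\aX)}\to \aX$ of the monoid is assembled exactly from the unit maps $\bS\to \aC(a,a)$ of $C_{\aX}$. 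The corresponding assertion for $B$ is identical.

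The only content beyond bookkeeping is to check that the two monoid axioms for $\aX$ unwind to the two axioms of a small spectral category for $C_{\aX}$, and conversely. For associativity, the key observation is that the smash product distributes over wedges, so the map $\mu\circ(\id\otimes\mu)\colon \aX\otimes \aX\otimes \aX\to \aX$ decomposes as a wedge, indexed over pairs of intermediate objects $(b_1,b_2)\in O(\aX)\times O(\aX)$, of the composites $\aX(c,b_1)\sma \aX(b_1,b_2)\sma \aX(b_2,a)\to \aX(c,a)$; equality with the composite coming from $\mu\circ(\mu\otimes \id)$ on each such summand is exactly associativity of composition in $C_{\aX}$. The unit diagrams unwind in the same pointwise fashion, using the description of $\bS_{O(\aX)}\otimes \aX\cong \aX\cong \aX\otimes \bS_{O(\aX)}$ summand by summand.

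On morphisms there is nothing beyond matching definitions. A strict morphism of small spectral categories is by fiat a family of maps on mapping spectra over a common object set commuting with units and compositions; a morphism of bi-indexed ring spectra (which by Definition~\ref{defn:bis} is empty unless source and target sets agree) is a family of maps on constituent spectra over a common object set commuting with unit and multiplication. The wedge-summand analysis of the previous paragraph shows that compatibility with $\mu$ is equivalent to compatibility with composition on each triple $(a,b,c)$, and likewise for units, so the two notions of morphism correspond bijectively under $B$ and $C$.

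No step is a real obstacle: the statement is essentially the standard fact that monoids in the category of $V$-graphs on a fixed object set are the same as small $V$-enriched categories (as recorded in \cite[\S 6.2]{SSMonoidalEq}), specialized to $V=\aS$ and then repackaged with the $TS$/$ST$ convention shift. The only place requiring care is the indexing: one must keep straight that $C_{\aX}(a,b)=\aX(b,a)$ and $B_{\aC}(a,b)=\aC(b,a)$ so that the composition $\aC(b,c)\sma \aC(a,b)\to \aC(a,c)$ lines up with the wedge summand of multiplication at intermediate index $b$. Once the indexing is tracked, all diagrams commute by inspection.
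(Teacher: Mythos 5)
The paper offers no explicit proof of Proposition~\ref{prop:iso}: it constructs $C$ and $B$, observes ``these assignments are evidently functorial,'' and records the statement as immediate, pointing back to the standard fact about monoids in $O$-graphs from \cite[\S6.2]{SSMonoidalEq}. Your unwinding is exactly the intended argument and is correct; in particular you correctly track the $TS$-vs-$ST$ reindexing so that the wedge summand of $\mu$ at intermediate index $b$ lines up with the composition $\aC(b,c)\sma\aC(a,b)\to\aC(a,c)$, and you correctly reduce the monoid associativity/unit axioms to the category axioms summand by summand. This matches the paper's (implicit) approach; no gap.
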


The more usual category of small spectral categories has morphisms given by
\term{spectral functors}, which are simply the spectrally enriched
functors.  The following theorem relates this notion to
Definition~\ref{defn:bifunc}.  We prove it at the end of the section
after reviewing more of the theory of small spectral categories and their
modules.

\begin{thm}\label{thm:functor}
There is a canonical bijection
between the set of spectral functors of small spectral categories $\aD\to
\aC$ 
and the set of spectral functors of the corresponding bi-indexed ring spectra.
This bijection is compatible with composition.
\end{thm}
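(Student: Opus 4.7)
The plan is to unwind Definition~\ref{defn:bifunc} in terms of mapping spectra and check that the resulting data matches the data of an enriched functor. Fix bi-indexed ring spectra $\aX$ and $\aY$ and set $\aC = C_{\aX}$, $\aD = C_{\aY}$, so that under Proposition~\ref{prop:iso} the conversion between the $TS$-convention and $ST$-convention gives $\aX(a,b) = \aC(b,a)$ and $\aY(a,b) = \aD(b,a)$.

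First, given an enriched functor $\phi\colon \aD\to \aC$ with hom-maps $\phi_{d,d'}\colon \aD(d,d')\to \aC(\phi(d),\phi(d'))$ and underlying object function $\phi\colon O(\aY)\to O(\aX)$, I construct a right $\aY$-action $\upsilon\colon S_{\phi}\aX \otimes \aY \to S_{\phi}\aX$ on the canonical left $\aX$-object $S_{\phi}\aX$. On the wedge summand indexed by $d\in O(\aY)$ of the $(a,d')$-component, this is the composite
\[
\aX(a,\phi(d))\sma \aY(d,d')\overto{\id\sma \phi_{d',d}}\aX(a,\phi(d))\sma \aX(\phi(d),\phi(d'))\overto{\mu}\aX(a,\phi(d')),
\]
where $\mu$ is the relevant wedge summand of the multiplication on $\aX$ (i.e., composition in $\aC$). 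The associativity and unit axioms for $\phi$ as an enriched functor translate directly into the axioms for a right $\aY$-action, while compatibility with the canonical left $\aX$-action is a direct consequence of associativity of composition in $\aC$.

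Conversely, given a right $\aY$-action $\upsilon$ commuting with the canonical left $\aX$-action, I recover each hom-map $\phi_{d,d'}$ by precomposing the $d$-summand of $\upsilon$ at bi-index $(\phi(d),d')$ with $\bS\to \aX(\phi(d),\phi(d))$ (the unit of $\aC$ at $\phi(d)$) smashed with $\aY(d,d')$. The unit axiom of the $\aY$-action shows $\phi$ preserves identities, and the associativity axiom together with left-$\aX$-linearity (which lets one ``slide'' an inserted identity past the action) shows $\phi$ preserves composition. That the two constructions are mutually inverse is a formal check: in one direction, splitting the wedge defining $\otimes$ and inserting units recovers the hom-maps from the action, and in the other direction, the left-$\aX$-action compatibility forces the action on the $d$-summand to factor as the displayed composite.

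For compatibility with composition, recall from the end of Section~\ref{sec:bis} that the composite of bi-indexed spectral functors $\theta\colon \aZ\to \aY$ and $\phi\colon \aY\to \aX$ is built from the isomorphism $S_{\phi}\aX \otimes_{\aY} S_{\theta}\aY \iso S_{\phi\circ \theta}\aX$ and the induced right $\aZ$-action. Unwinding this isomorphism through the split coequalizer displayed there, the resulting hom-map at bi-index $(e,e')$ factors through the summand indexed by $\theta(e')$, applies $\theta_{e,e'}$ followed by $\phi_{\theta(e),\theta(e')}$, and then composes in $\aC$, matching the classical enriched composite. The main obstacle is purely bookkeeping: keeping the $TS$- and $ST$-indexing conventions straight when identifying the wedge summands of the $\otimes$-product with composition in $\aC$. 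Once the indexing is fixed, every verification is a formal consequence of associativity and unitality.
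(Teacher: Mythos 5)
Your proposal is correct and follows essentially the same route as the paper's proof: both directions of the bijection are obtained by unwinding the right $\aY$-action on $S_{\phi}\aX$ into wedge-summand components and recovering the hom-maps by unit insertion (the paper packages the same computation via the $\Hom^{\ell}_{\aX}$ adjunction, the identification $\Hom^{\ell}_{\aX}(S_{\phi}\aX,S_{\phi}\aX)\iso R_{\phi,\phi}\aX$, and the enriched Yoneda lemma, but the content is identical), and both verify compatibility with composition through the split coequalizer for $S_{\phi}\aX\otimes_{\aY}S_{\theta}\aY$. One minor indexing slip worth noting: the converse formula you describe produces a map $\aY(d,d')\to\aX(\phi(d),\phi(d'))$, i.e.\ $\aD(d',d)\to\aC(\phi(d'),\phi(d))$, which in the notation of your own forward construction is $\phi_{d',d}$ rather than $\phi_{d,d'}$ --- an instance of exactly the $ST$/$TS$ bookkeeping you rightly identify as the main care point.
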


Left and right modules are basic notions for small spectral categories that
do not precisely correspond to left and right objects for bi-indexed
ring spectra.

\begin{defn}
Let $\aC$ be a small spectral category.  The (spectrally enriched)
category $\Mod_{\aC}$ of \term{left $\aC$-modules} is the (spectrally enriched) category
of spectrally enriched functors from $\aC$ to spectra; the (spectrally enriched)
category $\Mod_{\aC^{\op}}$ of \term{right $\aC$-modules} is the (spectrally enriched) category
of spectrally enriched contravariant functors from $\aC$ to spectra.
\end{defn}

For any one-point set $\{a\}$, the category of left
$\aC$-modules is isomorphic to the full subcategory category of left
$B_{\aC}$-objects with source set $\{a\}$ and is isomorphic as a
spectrally enriched 
category to the category of $(B_{\aC},\bS_{\{a\}})$-bimodules.  Likewise,
the category of right $\aC$-modules is isomorphic to the full
subcategory category of right $B_{\aC}$-objects with target set $\{a\}$
and is isomorphic as a spectrally enriched category to the category of
$(\bS_{\{a\}},B_{\aC})$-bimodules.  The category of left $B_{\aC}$-objects
is essentially the category of (singly) indexed left $B_{\aC}$-modules: a left
$B_{\aC}$-object $\aM$ consists of a left $\aC$-module $\aM^{st}(a,-)$
for each $a$ in $S(\aM)$.

Bimodules for small spectral categories do correspond precisely with
bimodules for bi-indexed ring spectra.  In the context of bimodules of
small spectral categories, just as in the context of bi-indexed
spectra, we take the convention that the category on the
left has the left action and the category on the right has the right
action. However, as always in the context of small spectral category
concepts, we follow the $ST$-indexing convention implicit in the definition
below that the righthand variable is the covariant one
while the lefthand variable is the contravariant one. 

\begin{defn}\label{defn:sbimod}
Let $\aC$ and $\aD$ be small spectral categories.  Let $\aD^{\op}\sma \aC$
be the small spectral category with objects $O(\aD^{\op}\sma
\aC)=O(\aD)\times O(\aC)$, mapping spectra 
\[
(\aD^{\op}\sma \aC)((d,c),(d',c'))=\aD(d',d)\sma \aC(c,c'),
\]
unit induced by the units of $\aC$ and $\aD$ (and the canonical
isomorphism $\bS\sma \bS\iso\bS$), and composition induced by the
composition on $\aD$  (performed
backwards) and the composition on $\aC$:
\begin{multline*}
(\aD^{\op}\sma \aC)((d',c'),(d'',c''))\sma (\aD^{\op}\sma \aC)((d,c),(d',c'))\\
=(\aD(d'',d')\sma \aC(c',c''))\sma (\aD(d',d)\sma \aC(c,c'))\\
\iso (\aD(d',d)\sma\aD(d'',d'))\sma (\aC(c',c'')\sma \aC(c,c'))\\
\to \aD(d'',d)\sma \aC(c,c'')
=(\aD^{\op}\sma \aC)((d,c),(d'',c'')).
\end{multline*}
The (spectrally enriched) category $\Mod_{\aC,\aD}$ of
\term{$(\aC,\aD)$-bimodules} is
the (spectrally enriched) category of
spectrally enriched functors from $\aD^{\op}\sma \aC$ to spectra.  
\end{defn}

Given a $(\aC,\aD)$-bimodule $\aF$, we write $B_{\aF}$ for the
$(O(\aC),O(\aD))$-spectrum 
\[
B_{\aF}(c,d)=B^{st}_{\aF}(d,c)=\aF(d,c)
\]
for $(c,d)\in
O(\aC)\times O(\aD)$.  This has a canonical
$(B_{\aC},B_{\aD})$-bimodule structure with action maps induced by
\[
B_{\aC}(c,c')\sma B_{\aF}(c',d)=
\aC(c',c)\sma \aF(d,c')\to \aF(d,c)=B_{\aF}(c,d)
\]
and
\[
B_{\aF}(c,d)\sma B_{\aD}(d,d')
=\aF(d,c)\sma \aD(d',d)\to \aF(d',c)=B_{\aF}(c,d').
\]
This is evidently functorial, and indeed extends canonically to a
spectrally enriched functor from the category of $(\aC,\aD)$-bimodules
to the category of $(B_{\aC},B_{\aD})$-bimodules (in bi-indexed spectra).

\begin{prop}
The spectrally enriched functor $B$ from $(\aC,\aD)$-bimodules to
$(B_{\aC},B_{\aD})$-bimodules (in bi-indexed spectra) is an
isomorphism of spectrally enriched categories.
\end{prop}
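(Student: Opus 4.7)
The plan is to exhibit an explicit inverse functor $C$ from $(B_{\aC},B_{\aD})$-bimodules (in bi-indexed spectra) to $(\aC,\aD)$-bimodules and verify that $B$ and $C$ are mutually inverse both on objects and on mapping spectra. Once this is done, the conclusion that $B$ is an isomorphism of spectrally enriched categories follows immediately. This is a direct extension of Proposition~\ref{prop:iso} from monoids to modules over them, so no deep input is needed.

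To construct $C$, let $\aM$ be a $(B_{\aC},B_{\aD})$-bimodule in bi-indexed spectra, so $\aM$ is an $(O(\aC),O(\aD))$-spectrum (in the $TS$-convention) equipped with compatible left $B_{\aC}$- and right $B_{\aD}$-actions. Set $(C\aM)(d,c) = \aM^{st}(c,d) = \aM^{ts}(d,c)$. To endow $C\aM$ with the structure of a spectrally enriched functor $\aD^{\op}\sma \aC \to \Sp$, we must produce a structure map
\[
\aD(d',d)\sma\aC(c,c')\sma (C\aM)(d,c)\to (C\aM)(d',c')
\]
for each quadruple $(d,d',c,c')$ satisfying associativity and unit constraints. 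We obtain this by taking the wedge summand corresponding to $(c,c',d,d')$ of the composite
\[
\aC(c,c')\sma (C\aM)(d,c) \sma \aD(d',d)\to (C\aM)(d',c')
\]
extracted from the wedge-sum-of-summands that comprises the composite left and right actions $B_{\aC}\otimes \aM\otimes B_{\aD}\to\aM$; the source-target conventions match because $B_{\aC}(c,c') = \aC(c',c)$ and $B_{\aD}(d,d') = \aD(d',d)$. The associativity and unit diagrams defining the monoid-module structures on $\aM$ are exactly the wedge-sum-of-summands packaging of the composition and unit axioms for a $\Sp$-enriched functor out of $\aD^{\op}\sma \aC$, so $C\aM$ is indeed a $(\aC,\aD)$-bimodule, and $C$ is functorial on strict morphisms by construction.

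The remaining checks are bookkeeping. That $CB\aF = \aF$ and $BC\aM = \aM$ on the level of underlying bi-indexed spectra is immediate from $B_{\aF}(c,d) = \aF(d,c)$; the two routes package the action data into precisely the same wedge-of-summands, so the module/functor structures also coincide. For the enriched statement, one unpacks the two end formulas: on the one hand, $\Mod_{\aC,\aD}(\aF,\aF')$ is the end over $\aD^{\op}\sma\aC$ of $F(\aF(d,c),\aF'(d,c))$, and on the other hand, $\Hom^{b}_{B_{\aC},B_{\aD}}(B_{\aF},B_{\aF'})$ is the equalizer appearing in Construction~2 (Section~\ref{sec:bis}) of $\Hom^{b}(B_{\aF},B_{\aF'})$ against $\Hom^{b}(B_{\aC}\otimes B_{\aF}\otimes B_{\aD},B_{\aF'})$. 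Both are limits of the same diagram of function spectra indexed over $O(\aC)\times O(\aD)$ with equalizing maps encoding the left $\aC$-composition and right $\aD$-composition; the canonical isomorphism $B$ on objects induces an isomorphism on these limits. The only potential obstacle is matching indexing conventions: the $TS$-versus-$ST$ distinction and the opposite that appears in $\aD^{\op}\sma\aC$ interact with the reversal of variables built into $B$, but the two reversals cancel precisely, which is exactly why the compositions and units of $\aD^{\op}\sma\aC$ were defined with the composition of $\aD$ \emph{reversed} in Definition~\ref{defn:sbimod}. No nontrivial homotopical content appears in the argument.
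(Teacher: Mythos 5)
Your proposal is correct and follows the route the paper itself sketches immediately after the statement (the paper does not write out a proof, but points to the explicit inverse $C$). You exhibit $C$, match the structure maps summand-by-summand, and identify both mapping spectra as the same limit of function spectra — this is exactly the content. One small indexing slip: you write $(C\aM)(d,c)=\aM^{st}(c,d)=\aM^{ts}(d,c)$, but since $T(\aM)=O(\aC)$ and $S(\aM)=O(\aD)$, this should read $(C\aM)(d,c)=\aM^{ts}(c,d)=\aM^{st}(d,c)$; the version you wrote has the slots reversed relative to the source/target sets of $\aM$. The rest of your argument (notably the action map you extract from the wedge decomposition) is consistent with the corrected formula, so this is just a typo in the opening line, not a flaw in the argument.
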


We write the inverse isomorphism as $C$; evidently,
$C_{\aM}(c,d)=\aM(d,c)$ for all $(c,d)\in O(\aC)\times O(\aD)$.

In light of the previous proposition, for $(\aC,\aD)$-bimodules $\aF$
and $\aG$, we write $\Hom^{b}_{\aC,\aD}(\aF,\aG)$ for the spectrum of
bimodule maps from $\aF$ to $\aG$ and we more generally define $\otimes$, $\otimes_{\aD}$,
$\Hom^{\ell}$, $\Hom^{\ell}_{\aC}$, $\Hom^{r}$, and $\Hom^{r}_{\aD}$ in terms of the inverse
isomorphisms $B$ and $C$ (for spectral categories / bi-indexed ring spectra and
bimodules). In explicit terms, we have:

\begin{prop}\label{prop:formulas}
Let $\aA$, $\aB$, and $\aC$ be small spectral categories.
\begin{enumerate}
\item\label{i:tensor} For an $(\aA,\aB)$-bimodule $\aF$ and a $(\aB,\aC)$-bimodule
$\aG$, the $(\aA,\aC)$-bimodule $\aF\otimes \aG=C_{B_{\aF}\otimes
B_{\aG}}$ satisfies
\[
(\aF\otimes \aG)(c,a)=\bigvee_{b\in O(\aB)}\aF(b,a)\sma \aG(c,b)
\]
and $\aF\otimes_{\aB} \aG=C_{B_{\aF}\otimes_{B_{\aB}}
B_{\aG}}$ is the coequalizer
\[
\xymatrix@C-1pc{%
\aF\otimes \aB\otimes \aG
\ar@<.5ex>[r]\ar@<-.5ex>[r]
&\aF\otimes \aG\ar[r]
&\aF\otimes_{\aB}\aG.
}
\]
\item\label{i:Homl} For an $(\aA,\aB)$-bimodule $\aF$ and an $(\aA,\aC)$-bimodule
$\aG$, the $(\aB,\aC)$-bimodule
$\Hom^{\ell}(\aF,\aG)=C_{\Hom^{\ell}(B_{\aF},B_{\aG})}$ satisfies
\[
(\Hom^{\ell}(\aF,\aG))(c,b)=\prod_{a\in O(\aA)}F(\aF(b,a),\aG(c,a))
\]
and
$\Hom^{\ell}_{\aA}(\aF,\aG)=C_{\Hom^{\ell}_{B_{\aA}}(B_{\aF},B_{\aG})}$
is the equalizer
\[
\xymatrix@C-1pc{%
\Hom^{\ell}_{\aA}(\aF,\aG)\ar[r]
&\Hom^{\ell}(\aF,\aG)
\ar@<.5ex>[r]\ar@<-.5ex>[r]
&\Hom^{\ell}(\aA\otimes \aF,\aG).
}
\]
\item\label{i:Homr} For an $(\aA,\aB)$-bimodule $\aF$ and a $(\aC,\aB)$-bimodule
$\aG$, the $(\aC,\aA)$-bimodule
$\Hom^{r}(\aF,\aG)=C_{\Hom^{r}(B_{\aF},B_{\aG})}$ satisfies
\[
(\Hom^{r}(\aF,\aG))(a,c)=\prod_{b\in O(\aB)}F(\aF(b,a),\aG(b,c))
\]
and
$\Hom^{r}_{\aB}(\aF,\aG)=C_{\Hom^{r}_{B_{\aB}}(B_{\aF},B_{\aG})}$
is the equalizer
\[
\xymatrix@C-1pc{%
\Hom^{r}_{\aB}(\aF,\aG)\ar[r]
&\Hom^{r}(\aF,\aG)
\ar@<.5ex>[r]\ar@<-.5ex>[r]
&\Hom^{r}(\aF\otimes \aB,\aG).
}
\]
\end{enumerate}
\end{prop}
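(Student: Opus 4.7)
The proof is essentially a direct unwinding of definitions, using the inverse isomorphisms $B$ and $C$ from the preceding proposition to transport the constructions of Section~\ref{sec:bis} from bi-indexed spectra to small spectral categories. Since $B_{\aF}(a,b)=\aF(b,a)$ and the analogue holds for $\aG$, the formulas on the right-hand sides of parts~(\ref{i:tensor})--(\ref{i:Homr}) are obtained mechanically from Constructions~\ref{cons:tensor} and~\ref{cons:homlrb} once one tracks the switch from the $TS$-convention (bi-indexed spectra) to the $ST$-convention (bimodules of small spectral categories). The main task is therefore clerical bookkeeping with index conventions; the coequalizer and equalizer presentations are preserved because $C$, being an isomorphism of spectrally enriched categories, preserves all limits and colimits.

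For part~(\ref{i:tensor}), Construction~\ref{cons:tensor} applied to $B_{\aF}$ (an $(O(\aA),O(\aB))$-spectrum) and $B_{\aG}$ (an $(O(\aB),O(\aC))$-spectrum) yields the $(O(\aA),O(\aC))$-spectrum
\[
(B_{\aF}\otimes B_{\aG})(a,c)=\bigvee_{b\in O(\aB)} B_{\aF}(a,b)\sma B_{\aG}(b,c).
\]
Substituting $B_{\aF}(a,b)=\aF(b,a)$ and $B_{\aG}(b,c)=\aG(c,b)$ and applying $C$ (which reindexes to the $ST$-convention) produces exactly the displayed formula for $(\aF\otimes\aG)(c,a)$. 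The coequalizer presentation of $\aF\otimes_{\aB}\aG$ is then the image under $C$ of the coequalizer defining $B_{\aF}\otimes_{B_{\aB}}B_{\aG}$ in bi-indexed spectra.

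For part~(\ref{i:Homl}), Construction~\ref{cons:homlrb} applied to $B_{\aF}$ and $B_{\aG}$ (which share target set $O(\aA)$) produces the $(O(\aB),O(\aC))$-spectrum
\[
(\Hom^{\ell}(B_{\aF},B_{\aG}))(b,c)=\prod_{a\in O(\aA)} F(B_{\aF}(a,b),B_{\aG}(a,c)),
\]
and substituting $B_{\aF}(a,b)=\aF(b,a)$, $B_{\aG}(a,c)=\aG(c,a)$ followed by applying $C$ yields the stated formula for $(\Hom^{\ell}(\aF,\aG))(c,b)$. The equalizer presentation of $\Hom^{\ell}_{\aA}(\aF,\aG)$ is the image under $C$ of the equalizer defining $\Hom^{\ell}_{B_{\aA}}(B_{\aF},B_{\aG})$. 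Part~(\ref{i:Homr}) is entirely analogous, using instead that $B_{\aF}$ and $B_{\aG}$ share source set $O(\aB)$ and applying the formula for $\Hom^{r}$ from Construction~\ref{cons:homlrb}.

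The only genuine obstacle is ensuring that the left/right actions transported along $B$ and $C$ line up with the bimodule structures built from the compositions in $\aA$, $\aB$, and $\aC$; this was effectively already settled in the construction of $B_{\aF}$ and its bimodule structure in Section~\ref{sec:rewrite}, so there is nothing substantive to verify beyond checking that the equalizer and coequalizer diagrams agree under the isomorphism, which is immediate.
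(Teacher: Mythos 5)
Your proof is correct and takes exactly the route the paper intends: the paper states this proposition without proof, immediately following the sentence that \emph{defines} $\otimes$, $\otimes_{\aB}$, $\Hom^{\ell}$, $\Hom^{\ell}_{\aA}$, $\Hom^{r}$, $\Hom^{r}_{\aB}$ for bimodules over small spectral categories as the transports along $B$ and $C$ of the corresponding constructions for bi-indexed spectra, so the content is just the definitional unwinding and index bookkeeping you carry out.
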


As an immediate consequence of Proposition~\ref{prop:bimadj}, we
obtain the corresponding adjunction in the context of small spectral categories.

\begin{prop}\label{prop:scbimadj}
Let $\aA$, $\aB$, and $\aC$ be small spectral categories.  Let $\aF$ be
an $(\aA,\aB)$-bimodule, $\aG$ be a $(\aB,\aC)$-bimodule, and let
$\aH$ be an $(\aA,\aC)$ bimodule.  Then there are canonical
isomorphisms of spectra
\begin{align*}
&\hspace{-3em}\Hom^{b}_{\aA,\aC}(\aF\otimes_{\aB} \aG,\aH)\hspace{-3em}\\
\Hom^{b}_{\aA,\aB}(\aF,\Hom^{r}_{\aC}(\aG,\aH))\iso&&
\iso\Hom^{b}_{\aB,\aC}(\aG,\Hom^{\ell}_{\aA}(\aF,\aH)).
\end{align*}
\end{prop}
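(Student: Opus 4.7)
The plan is to reduce this directly to Proposition~\ref{prop:bimadj} via the inverse isomorphisms $B$ and $C$ between small spectral categories (with their bimodules) and bi-indexed ring spectra (with their bimodules). Indeed, the paragraph preceding Proposition~\ref{prop:formulas} defines $\otimes_{\aB}$, $\Hom^{\ell}_{\aA}$, $\Hom^{r}_{\aC}$, and $\Hom^{b}_{\aA,\aB}$ on small-spectral-category bimodules precisely by transporting the corresponding operations on bi-indexed ring spectra through $B$ and $C$; once this is unpacked, the claimed isomorphisms transport for free.

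Concretely, first I would apply $B$ to the data, sending the $(\aA,\aB)$-bimodule $\aF$, the $(\aB,\aC)$-bimodule $\aG$, and the $(\aA,\aC)$-bimodule $\aH$ to the $(B_{\aA},B_{\aB})$-bimodule $B_{\aF}$, the $(B_{\aB},B_{\aC})$-bimodule $B_{\aG}$, and the $(B_{\aA},B_{\aC})$-bimodule $B_{\aH}$ in bi-indexed spectra. Proposition~\ref{prop:bimadj} applied to this triple yields canonical isomorphisms of spectra
\[
\Hom^{b}_{B_{\aA},B_{\aB}}(B_{\aF},\Hom^{r}_{B_{\aC}}(B_{\aG},B_{\aH}))
\iso \Hom^{b}_{B_{\aA},B_{\aC}}(B_{\aF}\otimes_{B_{\aB}} B_{\aG},B_{\aH})
\iso \Hom^{b}_{B_{\aB},B_{\aC}}(B_{\aG},\Hom^{\ell}_{B_{\aA}}(B_{\aF},B_{\aH})).
\]
Then I would invoke the defining equations $\aF\otimes_{\aB}\aG=C_{B_{\aF}\otimes_{B_{\aB}} B_{\aG}}$, $\Hom^{\ell}_{\aA}(\aF,\aH)=C_{\Hom^{\ell}_{B_{\aA}}(B_{\aF},B_{\aH})}$, and $\Hom^{r}_{\aC}(\aG,\aH)=C_{\Hom^{r}_{B_{\aC}}(B_{\aG},B_{\aH})}$, together with the identification of $\Hom^{b}_{\aA,\aB}(-,-)$ with $\Hom^{b}_{B_{\aA},B_{\aB}}(B_{-},B_{-})$ supplied by the fact that $B$ is an isomorphism of spectrally enriched categories. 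Under these identifications, the three spectra above are literally the three spectra appearing in the statement of Proposition~\ref{prop:scbimadj}.

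The only real obstacle is bookkeeping: the small-spectral-category conventions use $ST$-indexing while the bi-indexed-spectra conventions use $TS$-indexing, and one must verify that the $B$/$C$ translation dictionary in Proposition~\ref{prop:formulas} produces the correct variance (contravariant on the first argument of $\Hom^{\ell}$, on the second argument of $\Hom^{r}$, and on both arguments of $\Hom^{b}$). Since $B$ and $C$ are defined precisely by interchanging source and target and the formulas in Proposition~\ref{prop:formulas} have already been worked out in both notations, this verification is mechanical and requires no new ideas.
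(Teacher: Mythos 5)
Your proposal is correct and matches the paper's approach exactly: the paper introduces the proposition with the phrase ``As an immediate consequence of Proposition~\ref{prop:bimadj}, we obtain the corresponding adjunction in the context of small spectral categories,'' and supplies no further proof, relying precisely on the fact that $\otimes_{\aB}$, $\Hom^{\ell}_{\aA}$, $\Hom^{r}_{\aC}$, and $\Hom^{b}_{\aA,\aB}$ are defined by transport along the inverse isomorphisms $B$ and $C$. Your write-up simply makes the transport explicit, which is fine.
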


Comparing the formulas in Proposition~\ref{prop:formulas} with the
intrinsic definition of the spectral enrichment of a category of
spectral functors reveals the following relationship between
$\Hom^{\ell}_{\aC}$ and the spectral enrichment on the category of
left $\aC$-modules, which is essentially a special case
of the observation on $\Hom^{\ell}_{\aX}$ and source restriction in
the previous section.  An analogous result holds for
$\Hom^{r}_{\aC}$ and the spectral enrichment on the category of right
$\aC$-modules.

\begin{prop}\label{prop:homover}
In the notation of Proposition~\ref{prop:formulas}.(\ref{i:Homl}), 
\[
(\Hom^{\ell}_{\aA}(\aF,\aG))^{st}(c,b)
\iso \Mod_{\aA}(\aF(b,-),\aG(c,-))
\]
for all $b\in O(\aB)$, $c\in O(\aC)$.
\end{prop}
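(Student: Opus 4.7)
The proof is essentially a matter of unwinding definitions on both sides and matching the resulting equalizers. I will proceed in two steps: first identify the underlying spectra, then identify the two maps whose equalizers describe the two sides.

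First, I would unpack the right-hand side. By the standard end formula for the spectrum of natural transformations between spectrally enriched functors (specialized to left $\aA$-modules), the spectrum $\Mod_{\aA}(\aF(b,-),\aG(c,-))$ is the equalizer
\[
\textstyle\prod_{a\in O(\aA)} F(\aF(b,a),\aG(c,a))
\rightrightarrows
\textstyle\prod_{a,a'\in O(\aA)} F(\aA(a,a')\sma \aF(b,a),\aG(c,a')),
\]
where one map applies the left $\aA$-action on $\aF(b,-)$ before $\phi$ and the other applies the left $\aA$-action on $\aG(c,-)$ after $\phi$.

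Next I would unpack the left-hand side. By Proposition~\ref{prop:formulas}.(\ref{i:Homl}), $\Hom^{\ell}_{\aA}(\aF,\aG)$ is the equalizer of a pair of maps $\Hom^{\ell}(\aF,\aG)\rightrightarrows \Hom^{\ell}(\aA\otimes \aF,\aG)$. Evaluating at $(c,b)$, the first term is literally $\prod_{a} F(\aF(b,a),\aG(c,a))$, matching the first term of the end formula. For the second term, the formula for $\otimes$ in Proposition~\ref{prop:formulas}.(\ref{i:tensor}) gives
\[
(\aA\otimes \aF)(b,a) = \textstyle\bigvee_{a'\in O(\aA)} \aA(a',a)\sma \aF(b,a'),
\]
so substituting into the formula for $\Hom^{\ell}$ and using that $F$ converts wedges in the first variable to products yields
\[
\Hom^{\ell}(\aA\otimes \aF,\aG)(c,b) \iso \textstyle\prod_{a,a'} F(\aA(a',a)\sma \aF(b,a'),\aG(c,a)),
\]
which agrees with the second term of the end formula after renaming $a\leftrightarrow a'$.

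Finally, I would verify that the two pairs of parallel arrows correspond under these identifications. The construction of $\Hom^{\ell}_{\aA}$ in the previous section specifies one map via the left $\aA$-action on $\aF$ (i.e., precomposition by $\aA\otimes \aF\to \aF$) and the other via the left $\aA$-action on $\aG$ (through the canonical enlargement $\Hom^{\ell}(\aF,\aG)\to \Hom^{\ell}(\aA\otimes \aF,\aA\otimes \aG)$ postcomposed with $\aA\otimes \aG\to \aG$). Unwinding the adjunctions, these are precisely the two maps in the end formula for $\Mod_{\aA}(\aF(b,-),\aG(c,-))$, in opposite order. This is pure bookkeeping with the tensor-$\Hom$ adjunction of Construction~\ref{cons:homlrb}, and is the one place where slight care is required; the rest of the argument is definition-chasing and has no substantive obstacle. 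The isomorphism of equalizers then gives the claim.
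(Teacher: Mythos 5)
Your proof is correct and matches the paper's intended argument: the paper offers no separate proof, saying only that the claim follows by comparing the formulas of Proposition~\ref{prop:formulas} with the end formula for the spectral enrichment of $\Mod_{\aA}$, which is exactly the definition-chase you carry out. The variable relabeling ($a\leftrightarrow a'$) and the identification of the two parallel maps are the only points requiring care, and you handle both correctly.
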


Finally, we return to Theorem~\ref{thm:functor}.

\begin{proof}[Proof of Theorem~\ref{thm:functor}]
Let $\aX$ and $\aY$ be bi-indexed ring spectra and let $\aC$ and $\aD$
denote the corresponding small spectral categories.

Given a spectral functor $\psi\colon \aD\to \aC$, let $\aF_{\psi}$
denote the $(\aC,\aD)$-bimodule with component spectra
$\aF^{st}_{\psi}(d,c)=\aC(\psi(d),c)$ and the evident bimodule structure.
Then the underlying left $\aX$-object of the $(\aX,\aY)$-bimodule
$B_{\aF_{\psi}}$ is $S_{\psi}\aX$.
Let $B_{\psi}$ be the spectral functor $\aX \to \aY$ that uses the
underlying object function of $\psi$ as the function on object sets and
$B_{\aF_{\psi}}$ as specifying the bimodule structure on
$S_{\psi}\aX$.

Given a spectral functor 
$\phi\colon \aY \to \aX$, we obtain a spectral functor $C_{\phi}$
from $\aD$ to $\aC$ using the same object function and
the map on morphism spectra defined as follows.
The map of left $\aX$-objects $S_{\phi}\aX\otimes \aY\to S_{\phi}\aX$ is 
adjoint to a map of bi-indexed spectra $\aY\to
\Hom^{\ell}_{\aX}(S_{\phi}\aX,S_{\phi}\aX)$.  
Because $\Hom^{\ell}$ converts source restriction in the first
variable and preserves source restriction in the second variable, we
have a canonical isomorphism 
\[
\Hom^{\ell}_{\aX}(S_{\phi}\aX,S_{\phi}\aX)\iso
R_{\phi,\phi}\Hom^{\ell}_{\aX}(\aX,\aX)
\iso R_{\phi,\phi}\aX.
\]
The map of bi-indexed spectra $\aY\to R_{\phi,\phi}\aX$ then specifies
a map $\aD(a,b)\to \aC(\phi(a),\phi(b))$ for all $a,b\in O(\aD)$.
In light of Proposition~\ref{prop:homover}, this map is the composite
\[
\aD(a,b)\to F_{\aC}(\aC(\phi(b),-),\aC(\phi(a),-))\iso 
\aC(\phi(a),\phi(b))
\]
of the adjoint of $\aC(\phi(b),-)\sma \aD(a,b)\to \aC(\phi(a),-)$ and
the enriched Yoneda lemma isomorphism.  From here it follows easily
that the constructed map on morphism spectra preserves units and
composition. 

It is clear that $B_{C_{\phi}}=\phi$, $C_{B_{\psi}}=\psi$, and
moreover that $B$ preserves composition of spectral functors.
\end{proof}

Using the enriched form of the Yoneda lemma, it is straightforward to
check that natural transformations of spectral functors between small
spectral categories correspond to maps of bimodules for spectral
functors between bi-indexed ring spectra; we do not use this result.

\section{Hochschild-Mitchell and McClure-Smith constructions}\label{sec:MS}

In this section, we review the point-set construction of topological
Hochschild cohomology of a small spectral category in terms of the
Hochschild-Mitchell complex $\CC$.  We then observe that this fits into the
framework of the McClure-Smith approach to the Deligne conjecture; in
particular, there is a natural $E_{2}$ ring spectrum structure on
$\CC$.

\begin{cons}[Topological Hochschild-Mitchell construction]\label{cons:HM}
Let $\aC$ be a\break small spectral category and $\aM$ a $(\aC,\aC)$-bimodule.
Let $\CC\supdot(\aC;\aM)$ be the cosimplicial spectrum
$\Hom^{b}_{\aC,\aC}(B\subdot(\aC;\aC;\aC),\aM)$, where $B\subdot$
denotes the two-sided bar construction for the monoidal product
$\otimes$.  More concretely, $\CC\supdot(\aC;\aM)$ is the cosimplicial
spectrum  which in cosimplicial degree $n$ is
\[
\Hom^{b}_{\aC,\aC}(\aC\otimes 
\underbrace{\aC\otimes \cdots \otimes \aC}_{n\text{ factors}}
\otimes \aC,\aM)
\]
with coface map $\delta^{i}$ induced by the product $\aC\otimes \aC\to
\aC$ (q.v.~Proposition~\ref{prop:iso}, Definition~\ref{defn:birs}) on
the $i$th, $(i+1)$th factors (starting the count from zero outside the
braces) and codegeneracy $\sigma^{i}$ maps the unit map
$C_{\bS_{O(\aC)}}\to \aC$ (q.v.~Proposition~\ref{prop:pmc}) inserting
the $\aC$ as the $i$th factor.  We write $\CC\supdot(\aC)$ for
$\CC\supdot(\aC;\aC)$ in the case $\aM = \aC$.  Let $CC(\aC;\aM)$ and
$CC(\aC)$ denote the spectra obtained by applying $\Tot$.
\end{cons}

Construction~\ref{cons:HM} is evidently covariantly functorial in maps
of the 
bimodule $\aM$ and contravariantly functorial in spectral functors of
the small spectral category $\aC$ (pulling back the bimodule structure along
the spectral functor).  Without hypotheses on $\aC$ and $\aM$, the
topological Hochschild-Mitchell construction may not preserve weak
equivalences.  However, when $\aC$ is pointwise relatively cofibrant
(see Definition~\ref{defn:relcof}) and $\aM$ is pointwise fibrant, $\CC$
preserves weak equivalences in each variable; see
Proposition~\ref{prop:hypreedy} and Theorem~\ref{thm:inv}. 

The free, forgetful adjunction arising from the interpretation of small
spectral categories as monoids for $\otimes$
(Proposition~\ref{prop:iso}) allows us to rewrite the
cosimplicial object in Construction~\ref{cons:HM} more explicitly as
\[
\CC^{0}(\aC,\aM)=\prod_{c}\aM(c,c)
\]
and
\begin{equation}\label{eq:HM}
\begin{aligned}
\CC^{n}(\aC;\aM)&\iso\Hom^{b}(\underbrace{\aC\otimes \cdots \otimes \aC}_{n\text{ factors}},\aM)\\
&\iso \prod_{c_{0},\ldots,c_{n}}F(\aC(c_{1},c_{0})\sma \cdots \sma\aC(c_{n},c_{n-1}),\aM(c_{n},c_{0})). 
\end{aligned}
\end{equation}
In this form, the faces $\delta^{1},\ldots,\delta^{n-1}\colon
\CC^{n-1}\to \CC^{n}$ are induced by the composition in the category
with $\delta^{0},\delta^{n}$ induced by the bimodule structure on
$\aM$; the degeneracies $\sigma^{i}\colon \CC^{n}\to \CC^{n+1}$ are
induced by $\bS\to \aC(c_{i},c_{i})$, inserting the identity map in
the $i$th position.  This is the usual explicit description of the
Hochschild-Mitchell construction for an enriched category.

Thinking in terms of the partial monoidal category of bi-indexed
spectra (Section~\ref{sec:bis}), the description of $\CC\supdot(\aC)$
as $\Hom^{b}(\aC\otimes \cdots \otimes \aC,\aC)$ identifies
$\CC\supdot(\aC)$ as the (non-symmetric) endomorphism operad
$\oEndb(B_{\aC})$ of the 
corresponding bi-indexed spectrum $B_{\aC}$.  Because $B_{\aC}$ is a 
monoid for the monoidal product, there is a canonical map $\bS\to
\oEndb(B_{\aC})(n)$ for all $n$ induced by the iterated multiplication
\[
B_{\aC}\otimes \cdots \otimes B_{\aC}\to B_{\aC}.
\]
These assemble to a map of non-symmetric operads $\Ass\to
\oEndb(B_{\aC})$, where $\Ass$ denotes the non-symmetric associative
operad in spectra $\Ass(n)=\bS$.  This is precisely a ``operad with
multiplication'' in the terminology of 
McClure-Smith~\cite[10.1]{McClureSmith-CosimplicialCubes}.  The point
of identifying this structure is that it is the data required in the
McClure-Smith theory to
induce an $E_{2}$ ring structure on $\Tot$.  Specifically, as a
consequence of~\cite[9.1,10.3]{McClureSmith-CosimplicialCubes}, we can
immediately deduce the following proposition.

\begin{prop}
The topological Hochschild-Mitchell construction $\CC(\aC))$ has a
canonical structure of a $\oD_{2}$-algebra ($E_{2}$ ring spectrum)
where $\aD_{2}$ is the $E_{2}$ operad of
McClure-Smith~\cite[\S9]{McClureSmith-CosimplicialCubes}. 
\end{prop}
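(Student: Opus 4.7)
The plan is to assemble the structure McClure-Smith require and then invoke their machinery essentially verbatim. First I would make the identification of $\CC\supdot(\aC)$ as a non-symmetric operad completely precise: writing out~\eqref{eq:HM}, the spectrum $\CC^{n}(\aC)$ is canonically isomorphic to $\Hom^{b}(B_{\aC}^{\otimes n},B_{\aC})$, and the operadic composition on $\oEndb(B_{\aC})$ is induced by substitution in the middle variable via the (partial) monoidal structure $\otimes$ of Section~\ref{sec:bis}. The associativity and unit coherences of Proposition~\ref{prop:pmc} ensure this gives a non-symmetric operad in spectra in the sense of McClure-Smith.

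Next I would check that the monoid structure on $B_{\aC}$ produces an operad-with-multiplication. The iterated multiplication $B_{\aC}^{\otimes n}\to B_{\aC}$ (well-defined up to canonical associativity) is a point of $\CC^{n}(\aC)$, and together with the unit $\bS\to B_{\aC}(c,c)$ these points assemble to an operad map $\Ass\to \oEndb(B_{\aC})$, exactly the data of~\cite[10.1]{McClureSmith-CosimplicialCubes}. The cosimplicial structure on $\CC\supdot(\aC)$ from Construction~\ref{cons:HM} then agrees with the cosimplicial structure induced on any operad with multiplication in the McClure-Smith framework: the coface maps $\delta^{0}$ and $\delta^{n}$ come from multiplying the leftmost (resp.\ rightmost) arguments by the multiplication element, the inner $\delta^{i}$ come from operadic composition with the binary multiplication, and the $\sigma^{i}$ come from operadic composition with the unit (identity insertion). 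This is a bookkeeping check using formulas~\eqref{eq:HM}.

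Once both pieces are in place, the conclusion is immediate from the McClure-Smith theorems: \cite[9.1]{McClureSmith-CosimplicialCubes} equips the totalization of any cosimplicial object coming from an operad with multiplication with an action of the operad $\oD_{2}$, and \cite[10.3]{McClureSmith-CosimplicialCubes} identifies this cosimplicial object with the one we have constructed here. Thus $\CC(\aC) = \Tot\CC\supdot(\aC)$ inherits a canonical $\oD_{2}$-algebra structure.

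The substantive part of the argument is not a hard theorem but a notational translation: the main obstacle is that the McClure-Smith setup is written for (non-symmetric) operads whose multiplication acts on a single spectrum, whereas our $B_{\aC}$ lives in the partial monoidal category of bi-indexed spectra, so one must verify that their operadic formalism makes sense when the underlying tensor product is only partial. This amounts to checking that all the tensor products $B_{\aC}^{\otimes n}$ that arise in the operadic composition and in the cosimplicial identity diagrams are defined (they are, since the source and target of $B_{\aC}$ both equal $O(\aC)$), after which the McClure-Smith arguments go through without change.
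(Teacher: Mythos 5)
Your proposal follows the paper's argument essentially verbatim: identify $\CC\supdot(\aC)$ with the non-symmetric endomorphism operad $\oEndb(B_{\aC})$ in the partial monoidal category of bi-indexed spectra, observe that the monoid structure on $B_{\aC}$ makes it an operad with multiplication, check that the cosimplicial structure of Construction~\ref{cons:HM} agrees with the one McClure-Smith induce (the paper writes out the $\delta^{i}=f\circ_{i}\mu$, $\sigma^{i}=f\circ_{i+1}e$ formulas where you describe the same thing in words), and invoke~\cite[9.1,10.3]{McClureSmith-CosimplicialCubes}. The one small clarification worth making to your last paragraph: once $\oEndb(B_{\aC})$ is constructed, it is an honest operad \emph{in spectra}, so the McClure-Smith machinery applies without modification; the partiality of $\otimes$ on bi-indexed spectra only enters in verifying that $\oEndb(B_{\aC})$ is indeed an operad (via the coherences of Proposition~\ref{prop:pmc} and the fact that $S(B_{\aC})=T(B_{\aC})=O(\aC)$ makes all the needed tensor powers defined), which you correctly note.
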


\begin{proof}
In terms of the maps $e\colon \bS\to \oEndb(B_{\aC})(0)$ and
$\mu\colon \bS\to \oEndb(B_{\aC})(2)$, under the isomorphism of
$\oEndb(B_{\aC})(n)$ with $\CC^{n}(\aC)$, the face and degeneracy maps
for $\CC(\aC)$ above coincide with the ones
described on p.~1136 of~\cite{McClureSmith-CosimplicialCubes} in the
proof of Theorem~10.3: for $f\in \oEndb(B_{\aC})$
\[
\sigma^{i}f=f\circ_{i+1}e,
\qquad \qquad 
\delta^{i}f=\begin{cases}
\mu\circ_{2}f&i=0\\
f\circ_{i}\mu&i=1,\ldots,n\\
\mu \circ_{1}f&i=n+1. 
\end{cases}
\qedhere
\]
\end{proof}

There is no naturality statement in the preceding proposition because
$\CC(\aC)$ is not functorial in $\aC$ (on the point-set level) in any
reasonable way.  It does have a very limited functoriality for
spectral functors that induce isomorphisms on mapping spectra; we
refer to these as \term{strictly fully faithful} spectral functors.
For a strictly fully faithful spectral functor $\phi \colon \aD\to
\aC$, there is a restriction map $\phi^{*}\colon \oEndb(\aC)\to
\oEndb(\aD)$, which is a map of operads with multiplication,
constructed as follows.

On arity $n$, the map takes the form
\begin{multline*}
\oEndb(\aC)(n)=\prod_{c_{0},\ldots,c_{n}}F(\aC(c_{1},c_{0})\sma \cdots \sma\aD(c_{n},c_{n-1}),\aD(c_{n},c_{0})) \\
\to\prod_{d_{0},\ldots,d_{n}}F(\aD(d_{1},d_{0})\sma \cdots \sma\aD(d_{n},d_{n-1}),\aD(d_{n},d_{0}))
=\oEndb(\aD)(n)
\end{multline*}
where on the $d_{0},\ldots,d_{n}$ factor of the target, we use
projection onto the $c_{i}=\phi(d_{i})$ factor of the source and
the isomorphism $\aC(\phi(d_{i}),\phi(d_{j}))\iso \aD(d_{i},d_{j})$
of the strictly fully faithful spectral functor $\phi$.

It is straightforward to verify that this map is compatible with the
operad structures and commutes with the inclusion of $\Ass$.

\begin{prop}\label{prop:limitednaturality}
The topological Hochschild-Mitchell construction $\CC$ extends to a
functor from the category of small spectral categories and
strictly fully faithful spectral functors to $\oD_{2}$-algebras
($E_{2}$ ring spectra).
\end{prop}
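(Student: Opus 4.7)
The plan is to verify that the restriction map $\phi^{*}$ defined in the paragraph preceding the proposition is in fact a map of operads with multiplication, and that this assignment is functorial in strictly fully faithful spectral functors. Then the $E_{2}$ structure on $\Tot$ provided by McClure-Smith~\cite[9.1,10.3]{McClureSmith-CosimplicialCubes} transports along $\phi^{*}$ automatically, since that theorem produces $\oD_{2}$-algebra structures on $\Tot$ naturally in maps of operads with multiplication.

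First I would treat the well-definedness of $\phi^{*}$. The key point is that strict full faithfulness gives, for each pair $(d,d')$ of objects of $\aD$, an actual isomorphism of spectra $\aD(d,d')\iso \aC(\phi(d),\phi(d'))$, whose inverse is what participates in the formula for $\phi^{*}$; nowhere does one need to invert composites or use homotopy inverses. Next I would check compatibility with the operadic units and the inclusion $\Ass\to \oEndb(B_{\aC})$: the unit $1\in \oEndb(B_{\aC})(1)$ is the identity map on $B_{\aC}$, and under projection to factors indexed by $\phi(d)$ followed by the inverse of $\phi$ it visibly becomes the identity on $B_{\aD}$. More generally the maps $\bS\to \oEndb(B_{\aC})(n)$ that assemble the multiplication are the iterated multiplications, and since $\phi$ is strict (hence preserves composition and units strictly once one inverts the fully faithful isomorphisms), their images under $\phi^{*}$ are the corresponding iterated multiplications of $B_{\aD}$.

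The main content is compatibility with operadic composition $\circ_{i}$. Here I would argue directly from the definition of the endomorphism operad: for $f\in \oEndb(B_{\aC})(n)$ and $g\in \oEndb(B_{\aC})(m)$, the composite $f\circ_{i} g$ is obtained by plugging $g$ into the $i$th slot and using the monoidal structure to contract the intermediate indices $c_{j}\in O(\aC)$. Pulling back along $\phi$ amounts to restricting to those $c_{j}$ of the form $\phi(d_{j})$; because $\phi$ induces an isomorphism on each mapping spectrum (and only on the diagonal piece $c_{i}=\phi(d_{i})$ does anything nontrivial happen to the projection), the projection and the plug-in commute. This reduces the verification to a bookkeeping exercise on indices, which one checks by writing out both $\phi^{*}(f\circ_{i}g)$ and $\phi^{*}f\circ_{i}\phi^{*}g$ component-wise using~\eqref{eq:HM} and observing they agree.

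Finally, I would verify functoriality: for composable strictly fully faithful $\psi\colon \aE\to \aD$ and $\phi\colon \aD\to \aC$, the composite $\phi\circ \psi$ is again strictly fully faithful, and $(\phi\circ \psi)^{*}=\psi^{*}\circ \phi^{*}$ because both sides, on each arity, project to the same factor $c_{i}=\phi(\psi(e_{i}))$ and apply the composite of inverses of the fully faithful isomorphisms for $\psi$ and $\phi$, which coincides with the inverse of the fully faithful isomorphism for $\phi\circ \psi$. Similarly $(\id_{\aC})^{*}=\id$. Applying McClure-Smith~\cite[9.1,10.3]{McClureSmith-CosimplicialCubes} to the map of operads with multiplication $\phi^{*}$ yields a map of $\oD_{2}$-algebras $\Tot(\CC\supdot(\aC))\to \Tot(\CC\supdot(\aD))$ and the functoriality just observed passes to $\Tot$. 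The main obstacle, insofar as there is one, is the operadic composition check; everything else is formal once one accepts that strict full faithfulness provides honest point-set isomorphisms on mapping spectra.
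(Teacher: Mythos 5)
Your proposal matches the paper's argument: the paper defines $\phi^{*}$ exactly as you describe, asserts that it is ``straightforward to verify'' compatibility with operad structure and the inclusion of $\Ass$, and lets the McClure--Smith naturality of the $\oD_{2}$-action on $\Tot$ do the rest. You have simply unpacked those verification steps (units, $\circ_{i}$-compatibility, functoriality) explicitly, which is exactly what the paper leaves to the reader.
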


\section{Homotopy theory of objects and bimodules over small spectral categories}

Proposition~\ref{prop:limitednaturality} established a very limited
naturality for the functor $\CC$.  To extend this functoriality to the
more general derived statement of Theorems~\ref{main:dkfunct}
and~\ref{main:dcc}, we need to introduce in the next section some
conditions on bimodules that we
call centralizer conditions.  These are phrased in terms of the
derived functors of $\Hom^{\ell}_{\aC}$ and $\Hom^{r}_{\aD}$; the
purpose of this section is to set up the homotopical algebra and
review some conditions that ensure that the point-set functors
represent the derived functors.

Before beginning the discussion of model category structures, it is
convenient to introduce terminology for extending conditions and
properties on spectra and maps of spectra to small spectral categories, left
and right objects, and bimodules.

\begin{defnsch}
For any property or condition on spectra or maps of spectra, we say
the property holds \term{pointwise} on a bi-indexed spectrum, bi-indexed
spectrum with extra structure, or a map of bi-indexed spectra with
extra structure when it holds at every bi-index.  We say that such a
property holds \term{pointwise} for a small spectral category, bimodule,
strict morphism of small spectral categories, or map of bimodules
when it holds for the underlying bi-indexed
spectrum or map of bi-indexed spectra.
\end{defnsch}

For the model structures on the category of bi-indexed spectra and for
bi-indexed ring spectra $\aX$ and $\aY$, the categories of left
$\aX$-objects, right $\aY$-objects, and $(\aX,\aY)$-bimodules, we take
the weak equivalences to be the pointwise weak equivalences and the
fibrations to be the pointwise fibrations.  To describe the cofibrations,
let $I$ denote the standard set of generating cofibrations for the
model category of spectra.  Then given sets $A,B$, elements $a\in A$,
$b\in B$ and element $i\colon C\to D$ in $I$, let $C_{A,B;a,b;i}$ and
$D_{A,B;a,b;i}$ be the $(A,B)$-spectra that are $C$ and $D$
(respectively) on $(a,b)$ and $*$ elsewhere, and let
$f_{A,B;a,b;i}\colon C_{A,B;a,b;i}\to D_{A,B;a,b;i}$ be the map of
bi-indexed spectra that does $i\colon C\to D$ on $(a,b)$.  Although
the collection $BI$ of such maps does not form a small set, for any
given bi-indexed spectrum $\aZ$, the collection of maps from the
domains of the elements of $BI$ does form a small set (or is
isomorphic to one) since only those $C_{A,B;a,b;i}$ with $A=T(\aZ)$
and $B=S(\aZ)$ admit a map to $\aZ$.  This allows the small object
argument to be applied with the collection $BI$.  The cofibrations of
bi-indexed spectra are exactly the pointwise cofibrations (maps that
are cofibrations at each bi-index $(a,b)$) and these are exactly the
maps that are retracts of relative cell complexes built by attaching
cells from $BI$ (in the sense of \cite[5.4]{MMSS}).  The cofibrations
in the category of left $\aX$-objects, right $\aY$-objects, and
$(\aX,\aY)$ bimodules are the retracts of relative cell complexes (in
the sense of \cite[5.4]{MMSS}) built by attaching cells of the form
$\aX\otimes f_{O(\aX),B;a,b;i}$, $f_{A,O(\aY);a,b;i}\otimes \aY$, and
$\aX\otimes f_{O(\aX),O(\aY);a,b;i}$, respectively. The usual
arguments (e.g., \cite[\S5]{MMSS}) prove the following proposition.

\begin{prop}
The category of bi-indexed spectra
and for bi-indexed ring spectra $\aX$ and $\aY$, the categories of
left $\aX$-objects, right $\aY$-objects, and $(\aX,\aY)$-bimodules are
topologically enriched closed model categories with fibrations and weak
equivalences the pointwise fibrations and pointwise weak equivalences,
and cofibrations the retracts of relative cell complexes. The category
of $(\aX,\aY)$-bimodules is a spectrally enriched closed model category.
\end{prop}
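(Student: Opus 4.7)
The plan is to follow the template of \cite[\S5]{MMSS} almost line-by-line, lifting the pointwise model structure on bi-indexed spectra to the categories of left $\aX$-objects, right $\aY$-objects, and $(\aX,\aY)$-bimodules via the standard free-forgetful adjunctions with left adjoints $\aX\otimes -$, $-\otimes \aY$, and $\aX\otimes -\otimes \aY$ (which make sense by Proposition~\ref{prop:pmc}). Although the ambient category of bi-indexed spectra admits morphisms only between objects with matching source and target sets, each of the four categories in play has all its objects supported on fixed underlying source/target sets (for bi-indexed spectra, one works within a fixed $(A,B)$-component; for the module categories, the source and target sets are determined by $\aX$ and $\aY$). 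Consequently limits and colimits exist and are computed pointwise in spectra, and the 2-out-of-3 and retract axioms for weak equivalences, fibrations, and acyclic fibrations transfer directly from their pointwise counterparts in $\aS$.

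Next I would run the small object argument twice: once with $BI$ to produce the (cofibration, acyclic fibration) factorization and once with the analogous class $BJ$ built from the generating acyclic cofibrations $J$ of $\aS$ to produce the (acyclic cofibration, fibration) factorization. The delicate point flagged in the excerpt is that $BI$ and $BJ$ are proper classes rather than sets. However, for any bi-indexed spectrum $\aZ$, only those $f_{A,B;a,b;i}$ with $A=T(\aZ)$ and $B=S(\aZ)$ admit a map out of their domain into $\aZ$, and this subclass is a set; the corresponding statement passes to the module categories through the free-forgetful adjunctions. Smallness of the domains follows pointwise from smallness in $\aS$. The two halves of the lifting axiom then follow: the (cof, acyclic fib) lifting is equivalent by adjunction (from the free functors) and pointwise reduction to the analogous lifting in $\aS$, and the (acyclic cof, fib) lifting follows from the characterization of acyclic cofibrations as retracts of $BJ$-cell complexes produced by the small object argument. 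Identification of cofibrations as retracts of relative $BI$-cell complexes (and its module analogues) is then the standard output of the small object argument.

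The topological enrichment in all four cases reduces to the topological pushout-product (SM7) axiom for $\aS$ applied pointwise at each bi-index, since tensoring with spaces is computed bi-index by bi-index. For the spectral enrichment of $(\aX,\aY)$-bimodules, the relevant SM7 statement is phrased in terms of $\Hom^{b}_{\aX,\aY}$; using the tensor-Hom adjunction of Proposition~\ref{prop:bimadj}, the pushout-product axiom for generating cells $\aX\otimes f_{O(\aX),O(\aY);a,b;i}\otimes \aY$ against cofibrations in the bimodule category reduces, after stripping off the free functor and the adjunction, to the pushout-product axiom for the generators $i\in I$ in $\aS$ with the spectral enrichment, handled pointwise.

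The principal obstacle is the bookkeeping around $BI$ and $BJ$ not being sets; this does not break the small object argument, but it requires the care described above to verify smallness and to legitimize the argument in each of the four categories. Everything else is formal once one commits to the lifting-along-the-free-module-adjunction strategy of \cite[\S5]{MMSS}.
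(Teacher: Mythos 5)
Your proposal reconstructs exactly the argument the paper has in mind: the paper explicitly describes the generating class $BI$, the set-theoretic point about $BI$ not being small, and the cofibration characterization via relative cell complexes, and then simply cites ``the usual arguments (e.g., \cite[\S5]{MMSS})'' for the proof. Your filling-in of the free-forgetful lifting, the small object argument, and the pointwise reduction of the enrichment axioms matches the intended proof, so there is nothing to flag.
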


\begin{prop}
For small spectral categories $\aC$ and $\aD$, the category of
$(\aC,\aD)$ bimodules is a spectrally enriched closed model category
with fibrations and weak equivalences the pointwise fibrations and
pointwise weak equivalences and cofibrations the retracts of relative
cell complexes.
\end{prop}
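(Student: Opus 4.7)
The proof reduces entirely to the previous proposition together with the isomorphism of spectrally enriched categories established after Definition~\ref{defn:sbimod}.

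My plan is as follows. Set $\aX = B_{\aC}$ and $\aY = B_{\aD}$. The previous proposition equips the category of $(\aX,\aY)$-bimodules in bi-indexed spectra with a spectrally enriched closed model structure whose fibrations and weak equivalences are pointwise and whose cofibrations are retracts of relative cell complexes built from the generating set $\{\aX \otimes f_{O(\aX),O(\aY);a,b;i} \otimes \aY\}$. The isomorphism of spectrally enriched categories $B \colon \Mod_{\aC,\aD} \iso \Mod_{\aX,\aY}$ (together with its inverse $C$) is an honest isomorphism on underlying ordinary categories as well as on mapping spectra, so we simply transport the model structure along it.

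To check that the transported structure has the asserted form, I would observe the following. First, since $B$ is defined by $B_{\aF}^{st}(d,c) = \aF(d,c)$, a map of $(\aC,\aD)$-bimodules is a pointwise weak equivalence or pointwise fibration on the $\aC,\aD$ side exactly when its image under $B$ is a pointwise weak equivalence or pointwise fibration on the bi-indexed side, since ``pointwise'' refers to the same underlying collection of spectra (indexed over $O(\aC) \times O(\aD) = O(\aX) \times O(\aY)$). Second, a cofibration in $\Mod_{\aC,\aD}$ is by transport a retract of a relative cell complex built from cells $C_{\aF}$ where $\aF = \aX \otimes f_{O(\aX),O(\aY);a,b;i} \otimes \aY$; under the identifications of Proposition~\ref{prop:formulas}, these cells are precisely the cells of the form given by tensoring representable bimodules with the generating cofibrations of spectra, and are exactly the standard generating cofibrations for bimodules over small spectral categories. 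Finally, the spectral enrichment transfers because $B$ preserves the spectrum of maps: the enrichment on $\Mod_{\aC,\aD}$ is given by $\Hom^{b}_{\aC,\aD}(-,-)$, which was defined precisely via $B$ and $\Hom^{b}_{\aX,\aY}$, and the pushout-product / SM7 axioms for the target are exactly those in the bi-indexed setting.

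There is essentially no obstacle here beyond bookkeeping: the only potentially delicate issue is confirming that the set-theoretic size concerns (smallness of the generating cofibrations relative to a given bimodule) are the same on both sides of the isomorphism, which holds because for a fixed $(\aC,\aD)$-bimodule $\aM$, the generating cells admitting a map into $B_{\aM}$ are indexed by $O(\aC) \times O(\aD)$ and an element of $I$, which is a small set. Thus the statement follows formally from the previous proposition via the isomorphism $\Mod_{\aC,\aD} \iso \Mod_{B_{\aC},B_{\aD}}$.
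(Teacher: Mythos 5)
Your proof is correct and follows exactly the route the paper intends (and does not spell out): transport the model structure established on $(B_\aC,B_\aD)$-bimodules along the isomorphism of spectrally enriched categories $B\colon \Mod_{\aC,\aD}\iso\Mod_{B_\aC,B_\aD}$ from the unlabeled proposition after Definition~\ref{defn:sbimod}, noting that pointwise conditions, cells, smallness, and the enrichment all match under the identification $O(\aC)\times O(\aD)=O(B_\aC)\times O(B_\aD)$. Your writeup is a faithful and adequately detailed account of the argument the paper implicitly defers to the reader.
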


In the statement ``topologically enriched'' or ``spectrally enriched''
means that the categories satisfy the topological or spectral version
of Quillen's Axiom SM7, which is called the ``Enrichment Axiom'' in
\cite[\S3]{LewisMandell2}.  

For identifying cofibrant resolutions in
Section~\ref{sec:pfcatzigzag}, we need to know when cofibrant
bimodules are pointwise cofibrant; a sufficient condition is for the
small spectral categories in question to be pointwise ``semicofibrant'':
Recall from Lewis-Mandell~\cite[1.2,6.4]{LewisMandell2} that a
spectrum $X$ is \term{semicofibrant} when $X\sma(-)$ preserves
cofibrations and acyclic cofibrations, or equivalently, $F(X,-)$
preserves fibrations and acyclic fibrations.  Cofibrant spectra are in
particular semicofibrant; in the standard model structure on symmetric
spectra and orthogonal spectra, the sphere spectrum is cofibrant and
all semicofibrant objects are cofibrant.  In the positive stable model
category and in EKMM $S$-modules, the sphere spectrum is not cofibrant but
is only semicofibrant.  It follows formally that a weak equivalence of
semicofibrant spectra $X\to X'$ induces a weak equivalence $X\sma Y\to
X'\sma Y$ for any spectrum $Y$ (to see this, smash with a cofibrant
approximation of $\bS$) and a weak equivalence $F(X',Z)\to F(X,Z)$ for
any fibrant spectrum $Z$ \cite[6.2]{LewisMandell2}.  The explicit
description of cofibrations in the bimodule model structures implies
the following proposition.

\begin{prop}
A cofibrant bi-indexed spectrum is pointwise cofibrant.  If $\aX$ and
$\aY$ are pointwise semicofibrant bi-indexed ring spectra then the
cofibrant objects in left 
$\aX$-objects, right $\aY$-objects, and $(\aX,\aY)$-bimodules are
pointwise cofibrant.  In particular if $\aC$ and $\aD$ are pointwise
semicofibrant small spectral categories then cofibrant $(\aC,\aD)$-bimodules
are pointwise cofibrant.
\end{prop}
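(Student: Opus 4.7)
The plan is to work directly from the explicit cellular description of cofibrations given in the paragraph preceding the proposition, together with the pointwise nature of colimits of bi-indexed spectra. In all three cases the argument reduces to: (a) the generating cells are pointwise cofibrations, (b) pushouts and transfinite compositions in the relevant category of bi-indexed spectra (left/right objects, bimodules) are computed pointwise, so pointwise cofibrations are closed under them, and (c) retracts of pointwise cofibrations are pointwise cofibrations.

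For the first statement, the generating cell $f_{A,B;a,b;i}\colon C_{A,B;a,b;i}\to D_{A,B;a,b;i}$ is by definition equal to $i\colon C\to D$ on the bi-index $(a,b)$ and equal to the identity $*\to *$ elsewhere, so it is pointwise a cofibration of spectra. Since colimits of bi-indexed spectra are computed bi-indexwise, the transfinite attachments of such cells starting from the pointwise-$*$ initial object build pointwise cell complexes, and hence pointwise cofibrant bi-indexed spectra. Retracts in the pointwise sense then give the full class of cofibrant bi-indexed spectra.

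For the second statement, consider a generating cell $\aX\otimes f_{O(\aX),B;a,b;i}$ for left $\aX$-objects. Using the formula for $\otimes$, at bi-index $(a',b')$ this map is $*\to *$ when $b'\ne b$ and is $\aX(a',a)\sma i\colon \aX(a',a)\sma C\to \aX(a',a)\sma D$ when $b'=b$. By pointwise semicofibrancy of $\aX$, the functor $\aX(a',a)\sma(-)$ preserves cofibrations, so each generating cell is a pointwise cofibration; as above, attaching cells from the pointwise-$*$ initial object yields pointwise cofibrant left $\aX$-objects and retracts preserve this property. The cases of right $\aY$-objects and of $(\aX,\aY)$-bimodules are identical, using pointwise semicofibrancy of $\aY$, respectively of both $\aX$ and $\aY$, to see that the generating cells $f_{A,O(\aY);a,b;i}\otimes\aY$ and $\aX\otimes f_{O(\aX),O(\aY);a,b;i}\otimes\aY$ are pointwise cofibrations (for the bimodule cells one applies semicofibrancy twice, once for the smash on the left and once for the smash on the right).

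The third statement is a direct translation of the second under the isomorphism $B\colon\aS\Cat\to\text{bi-indexed ring spectra}$ of Proposition~\ref{prop:iso}, together with the isomorphism of bimodule categories between $(\aC,\aD)$-bimodules and $(B_{\aC},B_{\aD})$-bimodules in bi-indexed spectra; pointwise semicofibrancy of $\aC$ and $\aD$ is equivalent to that of $B_{\aC}$ and $B_{\aD}$ by construction, so the previous paragraph applies. The only mild subtlety, and the point that requires the semicofibrancy hypothesis rather than merely pointwise cofibrancy, is ensuring that the smash products $\aX(a',a)\sma C$ and $D\sma\aY(b,b')$ appearing in the generating cells are themselves cofibrant spectra; this is precisely what the semicofibrancy assumption is designed to guarantee (recall from \cite[1.2,6.4]{LewisMandell2} that for the positive stable and $S$-module model structures the sphere is only semicofibrant, so pointwise cofibrancy would be too strong a hypothesis to assume on spectral categories whose units are given by $\bS$).
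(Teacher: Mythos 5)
Your argument is correct and is exactly the proof the paper leaves implicit (the paper simply asserts the proposition follows from ``the explicit description of cofibrations in the bimodule model structures''). You correctly unwind the formula for $\otimes$ to see that at a fixed bi-index $(a',b')$ the generating cell $\aX\otimes f_{O(\aX),B;a,b;i}$ is either $*\to *$ or $\aX(a',a)\sma i$, that semicofibrancy of the mapping spectra makes these pointwise cofibrations, and that colimits of bi-indexed spectra (and of objects/bimodules over bi-indexed ring spectra, since $\otimes$ preserves colimits in each variable) are computed bi-indexwise, so cell attachments and retracts preserve pointwise cofibrancy starting from the pointwise-$*$ initial object. You also silently correct the paper's typo (writing $\aX\otimes f_{O(\aX),O(\aY);a,b;i}\otimes\aY$ for the bimodule cell), which is the right formula.

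One small wording issue in your closing paragraph: the parenthetical about ``rather than merely pointwise cofibrancy'' reads as if pointwise cofibrancy were a weaker hypothesis than semicofibrancy, but in fact cofibrant implies semicofibrant. What you surely mean, and say correctly in the next clause, is that demanding pointwise \emph{cofibrancy} would be an unnecessarily strong hypothesis that fails for $\bS$-module and positive stable model structures, and that semicofibrancy is the minimal hypothesis that makes $\aX(a',a)\sma(-)$ and $(-)\sma\aY(b,b')$ preserve cofibrations. Also, the operative point is that the generating \emph{maps} become cofibrations, not merely that the \emph{objects} $\aX(a',a)\sma C$ are cofibrant (though the latter follows too); you state the map-level fact correctly in the second paragraph, so this is just a matter of aligning the final paragraph with it.
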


In later work, we use the following slightly stronger hypothesis on
the small spectral categories.

\begin{defn}\label{defn:relcof}
A small spectral category $\aC$ is pointwise relatively cofibrant when for
every object $c$ in $O(\aC)$, the unit map $\bS\to\aC(c,c)$ is a
cofibration of spectra, and for every pair of distinct objects $c,d$
in $O(\aC)$, the mapping spectrum $\aC(c,d)$ is cofibrant as a
spectrum.
\end{defn}

Pointwise relatively cofibrant small spectral categories are in particular
pointwise semicofibrant~\cite[1.3(c)]{LewisMandell2}.  
The following proposition produces sufficient examples of such
small spectral categories.  We made the following observation in~\cite[2.6--7]{BM-tc} based on
the earlier work of Schwede-Shipley~\cite[6.3]{SSMonoidalEq},
extending functoriality in strict morphisms to functoriality in
arbitrary spectral functors.  Although stated there in the context of 
symmetric spectra of simplicial sets, the same
arguments prove it for the other modern categories of spectra.

\begin{prop}\label{prop:replace}
Let $\aC$ be a small spectral category.  There are functorial spectral
categories $\aC^{\Cell}$ and $\aC^{\Cell,\Omega}$ and natural
DK-equivalences that are isomorphisms on object sets (or,
equivalently, strict morphisms that are pointwise weak equivalences)
\[
\aC\from \aC^{\Cell}\to \aC^{\Cell,\Omega}
\]
such that $\aC^{\Cell}$ is pointwise relatively cofibrant and
$\aC^{\Cell,\Omega}$ is pointwise relatively cofibrant and pointwise fibrant.
Moreover, if $\aC$ is pointwise fibrant, then so is $\aC^{\Cell}$.
\end{prop}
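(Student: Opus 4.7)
The plan is to construct $\aC^{\Cell}$ by a small-object-argument cellular cofibrant replacement in the model category of small spectral categories with a fixed object set, and then to obtain $\aC^{\Cell,\Omega}$ from $\aC^{\Cell}$ by a functorial pointwise fibrant replacement compatible with the composition structure. The main subtlety, as flagged by the reference to Schwede--Shipley and \cite{BM-tc}, is upgrading functoriality in strict morphisms to functoriality in arbitrary spectral functors.

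For $\aC^{\Cell}$, I would first fix the object set $O = O(\aC)$ and consider the model structure on small spectral categories with object set $O$ whose weak equivalences and fibrations are the pointwise ones; cofibrations are retracts of relative cell complexes built from cells of the form $\bS_O \vee (f_{O,O;a,b;i}) \vee \bS_O$ attached via the monoidal structure $\otimes$ over $\bS_O$. A cofibrant replacement $q \colon \aC^{\Cell} \to \aC$ in this structure is automatically a strict morphism and is pointwise a weak equivalence. Because the generating cofibrations have domain including $\bS_O$ as a summand (hitting the unit diagonally), the units $\bS \to \aC^{\Cell}(c,c)$ are cofibrations and the off-diagonal spectra $\aC^{\Cell}(c,d)$ are cofibrant in $\aS$, so $\aC^{\Cell}$ is pointwise relatively cofibrant in the sense of Definition~\ref{defn:relcof}. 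To extend this assignment from strict morphisms to all spectral functors $\phi\colon \aD \to \aC$, I would follow the device of \cite[2.6--7]{BM-tc}: the small object argument, run with a fixed transfinite ordinal and a canonical choice at each stage, produces a functor on the category where morphisms are spectral functors and not just strict morphisms. Concretely, given $\phi$, one attaches cells compatibly: the cells of $\aD^{\Cell}$ are pushed forward via the $(\aC,\aD)$-bimodule $S_\phi \aC$ (Definition~\ref{defn:bifunc}) to produce matching cells of $\aC^{\Cell}$, and the resulting strict morphism $\aD^{\Cell} \to \phi^*\aC^{\Cell}$ has object function $\phi$ on underlying categories, yielding the required spectral functor $\aD^{\Cell} \to \aC^{\Cell}$.

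For $\aC^{\Cell,\Omega}$, I would apply a functorial pointwise fibrant replacement to $\aC^{\Cell}$ in a manner that preserves the unit and composition maps. The cleanest way is a bar-construction/coend approach: take the spectral category $\aC^{\Cell,\Omega}$ with the same object set, mapping spectra obtained by applying a functorial fibrant-replacement monad (e.g.\ $\Omega$-spectrification or a small-object-argument fibrant replacement in $\aS$) cell by cell while compatibly preserving the composition $\otimes$-structure; alternatively, define $\aC^{\Cell,\Omega}$ as the endomorphism spectral category of a cofibrant-fibrant replacement of the canonical right $\aC^{\Cell}$-module $\aC^{\Cell}$ in each corepresentable component. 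Either construction produces a strict morphism $\aC^{\Cell} \to \aC^{\Cell,\Omega}$ that is a pointwise weak equivalence, is pointwise fibrant, and preserves cofibrancy of the mapping spectra because fibrant replacement of a cofibrant spectrum in any of the three listed model categories of spectra may be chosen to remain cofibrant (by applying the lift in a factorization of the terminal map). The units $\bS \to \aC^{\Cell,\Omega}(c,c)$ remain cofibrations for the analogous reason, using the relative fibrant replacement of $\bS \to \aC^{\Cell}(c,c)$.

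The moreover clause is immediate from the construction of $\aC^{\Cell}$: the cells attached during the small object argument for the cofibrant replacement are of the form $\aC \otimes f_{O,O;a,b;i} \otimes \aC$ with $i$ an acyclic cofibration in $\aS$ in the intermediate factorization steps, so when $\aC$ is pointwise fibrant one inductively preserves fibrancy through the cellular tower, and the resulting $\aC^{\Cell}$ is pointwise fibrant. The main obstacle I expect is verifying that the functoriality extension from strict morphisms to general spectral functors is compatible with composition of spectral functors in a strictly (rather than just weakly) functorial way; this is exactly the point treated carefully in \cite[\S 2]{BM-tc} and \cite[\S 6]{SSMonoidalEq}, and the argument there (relying on the pushout/cellular description of spectral functors $\aD\to\aC$ via the bimodule $S_\phi\aC$) carries over verbatim to the three categories of spectra under consideration.
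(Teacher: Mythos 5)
The paper itself does not prove this proposition; it cites \cite[2.6--7]{BM-tc} (which in turn relies on \cite[\S 6]{SSMonoidalEq}) and asserts that the argument carries over from symmetric spectra of simplicial sets to the other modern categories of spectra. Your sketch reconstructs a plausible proof in the same spirit: take $\aC^{\Cell}$ to be a cofibrant replacement in the Schwede--Shipley model structure on small spectral $O$-categories, observe that cofibrant $O$-categories are pointwise relatively cofibrant, extend from strict morphisms to general spectral functors via the device of \cite{BM-tc}, and then do a functorial fibrant replacement. So the overall strategy is correct, and you have identified the right references.

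Two of the steps as written are off, however. For the ``moreover'' clause you say that the cells attached during cofibrant replacement come from \emph{acyclic} cofibrations in $\aS$ and that adding such cells inductively preserves fibrancy through the tower. Neither claim is right: the cofibrant replacement of $\aC$ is built by attaching cells from the free monoids $T(f_{O,O;a,b;i})$ with $i$ a generating \emph{cofibration} (not an acyclic one), and attaching cofibration cells does not preserve pointwise fibrancy in general. The correct argument is simpler and structural: because weak equivalences and fibrations in the model structure on $O$-categories are defined pointwise, the augmentation $\aC^{\Cell}\to\aC$ is a pointwise acyclic fibration; so if $\aC(c,d)\to *$ is a fibration, the composite $\aC^{\Cell}(c,d)\to\aC(c,d)\to *$ is a fibration as well, and $\aC^{\Cell}$ is pointwise fibrant.

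For $\aC^{\Cell,\Omega}$, applying a fibrant-replacement functor for $\aS$ ``cell by cell while compatibly preserving the composition $\otimes$-structure'' does not work without further argument, since the standard fibrant replacement functors in symmetric or orthogonal spectra are not lax monoidal and so do not automatically produce a small spectral category from one. The cleanest route that achieves exactly what you need is to perform fibrant replacement \emph{in the model structure on small spectral $O$-categories}: factor $\aC^{\Cell}\to *$ into an acyclic cofibration followed by a fibration. The acyclic cofibration $\aC^{\Cell}\to\aC^{\Cell,\Omega}$ is in particular a cofibration of $O$-categories, so $\aC^{\Cell,\Omega}$ remains cofibrant, hence pointwise relatively cofibrant, and it is pointwise fibrant by definition of the model structure. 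Functoriality in spectral functors then follows once more by the \cite{BM-tc} mechanism (choosing the small-object factorizations canonically). The bar/coend and endomorphism-category alternatives you mention are not clearly constructions at all as stated, and should not be offered as drop-in replacements for the model-categorical one.
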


Next we move on to derived functors.  We concentrate on the case of
\[
\Hom^{\ell}_{\aX}(-,-)\colon (\LObj_{\aX})^{\op}\times \LObj_{\aX}\to \BiSp
\]
which takes a pair of left $\aX$-objects to a bi-indexed spectrum,
where $\aX$ is an arbitrary bi-indexed ring
spectrum.  The discussion for $\Hom^{r}$ has an exact parallel
for $\Hom^{\ell}$, switching left/right and source/target, with all
corresponding results holding.  In essence, Section~5
of~\cite{LewisMandell2} discusses this kind of derived functor,
although the story here is complicated because bi-indexed spectra and
categories of left 
objects are not enriched over spectra, but are only partially
enriched: once we fix a source set $A$, the full subcategory of left
$\aX$-objects with source set $A$ is isomorphic to the category of
$(\aX,\bS_{A})$-bimodules and then is enriched, while there are no
maps between left $\aX$-objects with different source sets.
Therefore, constructing ``partially enriched'' derived functors of two
variables for the entire
category of left $\aX$-objects is equivalent to constructing enriched
derived functors of two variables on each pair of these categories of
bimodules.  Applying \cite[5.8]{LewisMandell2}, we have the following result.

\begin{thm}\label{thm:derhom}
Let $\aX$ be a bi-indexed ring spectrum.  For the functor
\[
\Hom^{\ell}_{\aX}(-,-)\colon (\LObj_{\aX})^{\op}\times \LObj_{\aX}\to \BiSp
\]
the partially enriched right derived
functor $\bR\Hom^{\ell}_{\aX}(-,-)$ exists and is constructed by
cofibrant replacement of the first variable and fibrant replacement of
the second variable.
\end{thm}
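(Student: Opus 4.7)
The plan is to reduce to a direct application of \cite[5.8]{LewisMandell2} after organizing the partially enriched situation into genuinely enriched pieces. First I would decompose $\LObj_{\aX}$ according to source set: for each set $A$, let $\LObj_{\aX}^{A}\subset \LObj_{\aX}$ be the full subcategory of left $\aX$-objects whose underlying bi-indexed spectrum has source set $A$; this is isomorphic to the spectrally enriched model category of $(\aX,\bS_{A})$-bimodules in bi-indexed spectra discussed earlier in the section. Since left $\aX$-objects with distinct source sets admit no maps between them, $\LObj_{\aX}$ is the disjoint union of the $\LObj_{\aX}^{A}$, and the restriction of $\Hom^{\ell}_{\aX}(-,-)$ to $(\LObj_{\aX}^{A})^{\op}\times \LObj_{\aX}^{A'}$ is a spectrally enriched functor of two variables landing in the spectrally enriched model category of $(A,A')$-bi-indexed spectra.

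Next I would verify the two-variable Quillen bifunctor hypothesis needed to invoke \cite[5.8]{LewisMandell2}. By Proposition~\ref{prop:bimadj}, $\Hom^{\ell}_{\aX}$ is a right adjoint in a two-variable adjunction whose left adjoint is the balanced tensor product $\otimes_{\aX}$. Because fibrations and weak equivalences in the model categories of bimodules are defined at each bi-index, and cofibrations are built from cells of the form $\aX\otimes f_{O(\aX),A;a,b;i}$ with $i$ a generating cofibration in $\aS$, the pushout-product axiom for this adjunction reduces at each bi-index to the pushout-product axiom in the underlying category $\aS$, which holds for symmetric spectra, orthogonal spectra, and EKMM $S$-modules.

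With the hypothesis verified, applying \cite[5.8]{LewisMandell2} to each pair of source sets $(A,A')$ yields a spectrally enriched right derived functor of two variables on $(\LObj_{\aX}^{A})^{\op}\times \LObj_{\aX}^{A'}$, computed by cofibrant replacement of the first variable and fibrant replacement of the second. Since the functorial replacements fix the source set of the underlying bi-indexed spectrum, these derived bifunctors assemble into a single partially enriched derived functor $\bR\Hom^{\ell}_{\aX}(-,-)$ on all of $(\LObj_{\aX})^{\op}\times \LObj_{\aX}$. The only nontrivial step is the pushout-product verification; the rest is organizational bookkeeping made possible by the decomposition into source-set sectors.
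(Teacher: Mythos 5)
Your high-level plan matches the paper's: decompose by source set to turn the partially enriched situation into genuinely enriched ones, and invoke \cite[5.8]{LewisMandell2}. But you have misidentified the adjunction. For a fixed left $\aX$-object $\aM$ with source set $A$, the left adjoint to $\Hom^{\ell}_{\aX}(\aM,-)$ is $\aM\otimes(-)$, taking an $(A,B)$-spectrum $\aY$ to the left $\aX$-object $\aM\otimes \aY$ with source set $B$. Reading off Proposition~\ref{prop:bimadj}, the tensor paired with $\Hom^{\ell}_{\aX}$ is $\otimes_{\aY}$, and here $\aY=\bS_{A}$, so $\otimes_{\bS_A}=\otimes$. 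It is \emph{not} the balanced tensor $\otimes_{\aX}$: that would not even typecheck, since $\aM\otimes_{\aX}(-)$ needs $\aM$ to be a \emph{right} $\aX$-object, while here $\aM$ is a left one. This is not a cosmetic slip, because your key verification step --- ``the pushout-product axiom for this adjunction reduces at each bi-index to the pushout-product axiom in $\aS$'' --- is false for $\otimes_{\aX}$ (the balanced tensor is a coequalizer and does not compute bi-index-wise) but essentially correct for $\otimes$ (which at each bi-index is a wedge of smash products).

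There is a secondary mismatch with the actual hypotheses of \cite[5.8]{LewisMandell2}. That statement is about an enriched \emph{parametrized} adjunction, not a full two-variable Quillen bifunctor, and the conditions to check are conditional: each one-variable adjunction is Quillen \emph{when the parametrizing variable is cofibrant}, and the left adjoint preserves weak equivalences between cofibrant objects in the parametrizing variable when the adjunction variable is cofibrant. This matters because $\aX$ itself need not be cofibrant; the argument relies on the cells $\aX\otimes f_{O(\aX),A;a,b;i}$ and on $\bS_c\sma(-)$-type arguments, not on a pushout-product property of $\aX$. If you fix the adjunction to $\aM\otimes(-)$ and restate your verification in the parametrized form, the rest of your argument goes through along the same lines as the paper.
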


\begin{proof}
As indicated above, we restrict the source set to $A$ for the first
variable (the contravariant variable) and the source set to $B$ for
the second variable (the covariant variable) to apply
\cite[5.8]{LewisMandell2} directly.  In the statement of
\cite[5.8]{LewisMandell2}, to reach this conclusion, we need to
observe that (1) $\Hom^{\ell}_{\aX}$ fits into an enriched parametrized
adjunction, (2) that each adjunction of one variable is a Quillen
adjunction when the parametrizing variable is cofibrant, and (3) that the
left adjoint preserves weak equivalences between cofibrant objects in
the parametrizing variable when the adjunction variable is cofibrant
(or, equivalently, the analogous condition for fibrant objects on the right adjoint). In this case, the 
enriched parametrized left adjoint is given by the
functor $\otimes$ that takes a left $\aX$-object with source set $A$
and an $(A,B)$-spectrum to a left $\aX$-object with source
set $B$. (Here the $(A,B)$-spectrum is the adjunction variable and the
left $\aX$-object with source set $A$ is the parametrizing variable.)
From the explicit description of cofibrations, it is clear
that $\otimes$ preserves cofibrations in each variable when the other
is cofibrant.  Since smash product of spectra preserves acyclic
cofibrations of spectra and the smash product with a cofibrant
spectrum preserves arbitrary weak equivalences, it is clear from the
formula for $\otimes$ that it preserves acyclic cofibrations in each
variable when the other variable is cofibrant.  This then verifies the
hypotheses of~\cite[5.8]{LewisMandell2}.
\end{proof}

When $\aX\to \aX'$ is a map of bi-indexed ring spectra, we obtain a
canonical forgetful or pullback functor from left $\aX'$-objects to
left $\aX$-objects, which induces a natural transformation
$\Hom^{\ell}_{\aX'}\to \Hom^{\ell}_{\aX}$ and a natural transformation
of derived functors $\bR\Hom^{\ell}_{\aX'}\to \bR\Hom^{\ell}_{\aX}$.
The argument for~\cite[8.3]{LewisMandell2} then implies the following
result.

\begin{prop}\label{prop:change}
If $\aX\to \aX'$ is a weak equivalence of bi-indexed ring spectra,
then the forget functor from left $\aX'$-objects to left $\aX$-objects
is the right adjoint of a Quillen equivalence and the
natural map $\bR\Hom^{\ell}_{\aX'}\to \bR\Hom^{\ell}_{\aX}$ is a
natural isomorphism in the homotopy category of bi-indexed spectra.
\end{prop}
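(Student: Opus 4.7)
The plan is to identify the forgetful functor as the right adjoint of an extension-of-scalars Quillen adjunction, prove it is a Quillen equivalence along the weak equivalence $\aX \to \aX'$, and then deduce the Hom comparison formally.

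First, the left adjoint to the forgetful functor $\phi^{*}\colon \LObj_{\aX'} \to \LObj_{\aX}$ is the extension of scalars functor $\aX' \otimes_{\aX} (-)$, defined via the usual coequalizer. Because fibrations and weak equivalences in both left-object model structures are defined pointwise on bi-indexed spectra, and $\phi^{*}$ is the identity on the underlying bi-indexed spectrum, $\phi^{*}$ preserves and reflects fibrations and weak equivalences. This immediately makes $(\aX'\otimes_{\aX}-) \dashv \phi^{*}$ a Quillen adjunction.

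Next I would check the Quillen equivalence. Since $\phi^{*}$ preserves all weak equivalences, it suffices to show that the adjunction unit $\aM \to \aX'\otimes_{\aX}\aM$ is a pointwise weak equivalence for every cofibrant $\aM \in \LObj_{\aX}$. Using the explicit cellular description of cofibrant left $\aX$-objects, a standard transfinite-induction argument reduces the claim to the generating free cells $\aX \otimes f_{O(\aX),B;a,b;i}$ (noting that $O(\aX)=O(\aX')$ because a map of bi-indexed ring spectra requires matching object sets). On such a cell, the unit becomes the map $\aX \otimes f \to \aX' \otimes f$, and at each bi-index this is the smash product of the pointwise weak equivalence $\aX(-,a) \to \aX'(-,a)$ with the cofibrant spectrum supplying the cell. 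This is the main obstacle: it requires that smashing with a cofibrant spectrum preserves weak equivalences between arbitrary (not necessarily semicofibrant) spectra, which holds in each of the three categories of spectra under consideration and is precisely the content of the argument for \cite[8.3]{LewisMandell2}.

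Given the Quillen equivalence, the derived Hom comparison follows formally. For left $\aX'$-objects $\aM$ and $\aN$ (with fixed source sets), choose a cofibrant replacement $Q\phi^{*}\aM \to \phi^{*}\aM$ in $\LObj_{\aX}$; then $\aX'\otimes_{\aX}Q\phi^{*}\aM$ is cofibrant in $\LObj_{\aX'}$, and applying the Quillen equivalence to the unit on $Q\phi^{*}\aM$ shows that the adjoint map $\aX'\otimes_{\aX}Q\phi^{*}\aM \to \aM$ is a weak equivalence of $\aX'$-objects. Choose also a fibrant replacement $\aN \to R\aN$ in $\LObj_{\aX'}$; because fibrations and weak equivalences are pointwise, $\aN \to R\aN$ remains a fibrant replacement in $\LObj_{\aX}$. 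Combining Theorem~\ref{thm:derhom} with the tensor-Hom adjunction of Proposition~\ref{prop:bimadj} yields canonical isomorphisms
\begin{align*}
\bR\Hom^{\ell}_{\aX'}(\aM,\aN)
&\iso \Hom^{\ell}_{\aX'}(\aX'\otimes_{\aX}Q\phi^{*}\aM,\,R\aN) \\
&\iso \Hom^{\ell}_{\aX}(Q\phi^{*}\aM,\,\phi^{*}R\aN)
 \iso \bR\Hom^{\ell}_{\aX}(\phi^{*}\aM,\phi^{*}\aN),
\end{align*}
and a direct check that the composite agrees with the natural transformation $\bR\Hom^{\ell}_{\aX'}\to \bR\Hom^{\ell}_{\aX}$ induced by restriction completes the proof.
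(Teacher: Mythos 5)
Your proof is correct and follows essentially the same route the paper invokes: the paper's own justification for this proposition is simply a citation to the argument for \cite[8.3]{LewisMandell2}, and your writeup reconstructs that argument---extension of scalars as the left Quillen adjoint, reduction to the generating cells by cell induction, and the key input that smashing a cofibrant spectrum with an arbitrary stable equivalence yields a stable equivalence in each of the three spectrum categories---together with the formal deduction of the derived-Hom comparison from the Quillen equivalence via the tensor-Hom adjunction.
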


We prefer to phrase the centralizer conditions in the next section in
terms of small spectral categories and bimodules over small spectral categories.
One technical wrinkle that arises (and indeed is the main issued
studied by \cite{LewisMandell2} as a whole) is that 
when we plug bimodules into $\Hom^{\ell}_{\aX}$ and consider functors
of the form
\begin{equation}\label{eq:bimodhom}
\Hom^{\ell}_{\aX}\colon (\Mod_{\aX,\aY})^{\op}\times \Mod_{\aX,\aZ},
\to \Mod_{\aY,\aZ}
\end{equation}
even when the enriched right derived functor exists, it may not agree
with the derived functor $\bR\Hom^{\ell}_{\aX}$ of
Theorem~\ref{thm:derhom} without hypotheses on $\aY$.  We return to
this question below.

The technical issue just mentioned causes some
awkwardness in trying to state a version of Theorem~\ref{thm:derhom}
for small spectral categories.  We dealt with this in the introduction by
phrasing the centralizer conditions in terms of homotopical bimodules,
which are defined as follows.  By neglect of structure, small spectral
categories become small categories enriched over the stable category;
in the definition below $\aD^{\op}\sma^{\bL}\aC$ denotes the small
category enriched over the stable category that is defined analogously
to $\aD^{\op}\sma \aC$ in the Definition~\ref{defn:sbimod}, but using the
smash product in the stable category.

\begin{defn}
A homotopical $(\aC,\aD)$-bimodule is an enriched
functor from $\aD^{\op}\sma^{\bL}\aC$ to the stable category.  More
generally, for a category $\aCat$ (partially) enriched over spectra or over the
stable category, homotopical left $\aC$-modules in $\aCat$, 
homotopical right $\aD$-modules in $\aCat$, and homotopical
$(\aC,\aD)$-bimodules in $\aCat$ are functors enriched over the stable
category from $\aC$, $\aD^{\op}$, and $\aD^{\op}\sma^{\bL}\aC$ into
$\aCat$, respectively.
\end{defn}

When $\aM$ is a $(\aC,\aD)$-bimodule, by neglect of structure it is a
homotopical right $\aD$-module in the homotopy category of left $\aC$-objects
and any cofibrant approximation in the category of left $\aC$-modules
inherits the canonical structure of a homotopical right $\aD$-module. Similar
observations apply to the fibrant approximation of a
$(\aC,\aE)$-bimodule, giving 
\begin{equation}\label{eq:derhom}
\bR\Hom^{\ell}_{\aC}(\aM,\aN)=\bR\Hom^{\ell}_{B_{\aC}}(B_{\aM},B_{\aN})
\end{equation}
the canonical structure of a homotopical $(\aD,\aE)$-bimodule, for
$\bR\Hom^{\ell}_{B_{\aC}}$ the right derived functor in Theorem~\ref{thm:derhom}.

We now return to the question of when the right derived functor
of~\eqref{eq:bimodhom} exists and is compatible with the right
derived functor in Theorem~\ref{thm:derhom}.  Although written in the
context of symmetric monoidal categories, Theorems~1.7(a) and~1.11(a)
of~\cite{LewisMandell2} show that both of these hold when the
underlying bi-indexed spectrum of $\aY$ is pointwise
semicofibrant.  In our context, the following gives the most convenient
statement; as always, the analogous result for $\Hom^{r}_{\aD}$ also holds. 

\begin{thm}\label{thm:LMmain}
Let $\aC$ and $\aD$ be small spectral categories and assume that $\aD$ is pointwise
semicofibrant. 
\begin{enumerate}
\item The forgetful functor from $(\aC,\aD)$-bimodules to left
$\aC$-objects preserves cofibrations (and all weak equivalences).
\item The enriched right derived functor of 
\[
\Hom^{\ell}_{\aC}\colon (\Mod_{\aC,\aD})^{\op}\times \Mod_{\aC,\aD}\to \Mod_{\aD,\aD}
\]
exists and is constructed by cofibrant replacement of the
contravariant variable and fibrant replacement of the covariant
variable.
\item Moreover, the underlying functor to homotopical 
$(\aD,\aD)$-bimodules of the right derived functor of~(ii) agrees with
the derived functor of~\eqref{eq:derhom}. 
\end{enumerate}
\end{thm}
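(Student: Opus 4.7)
The plan is to establish (i) first --- this is the substantive content and the only place where the semicofibrancy hypothesis on $\aD$ is used --- and then deduce (ii) and (iii) formally by combining (i) with Theorem~\ref{thm:derhom}.

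For (i), I would work directly from the cellular description of the two model structures. A cofibration of $(\aC,\aD)$-bimodules is a retract of a relative cell complex built from cells of the form $\aC\otimes f_{O(\aC),O(\aD);a,b;i}\otimes \aD$ with $i\colon C\to D$ a generating cofibration of spectra, while the free functor $\aC\otimes(-)$ sends cofibrations of bi-indexed spectra to cofibrations of left $\aC$-objects. Thus it suffices to show that $f_{O(\aC),O(\aD);a,b;i}\otimes \aD$ is a pointwise cofibration of bi-indexed spectra. Evaluated at a bi-index $(a',b')$, this map is the identity on the basepoint unless $a'=a$, in which case it is $i\sma \aD(b,b')$; by the hypothesis that $\aD(b,b')$ is semicofibrant, smashing with it preserves cofibrations of spectra, so the map is a cofibration at every bi-index. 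Preservation of weak equivalences is automatic, since both categories have the pointwise weak equivalences.

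For (ii), once (i) is in hand, a cofibrant $(\aC,\aD)$-bimodule $\aM^c$ is cofibrant as a left $\aC$-object, and a fibrant $(\aC,\aD)$-bimodule $\aN^f$ is pointwise fibrant, hence fibrant as a left $\aC$-object. Theorem~\ref{thm:derhom} then identifies the underlying bi-indexed spectrum of $\Hom^{\ell}_{\aC}(\aM^c,\aN^f)$ with $\bR\Hom^{\ell}_{B_{\aC}}(B_{\aM},B_{\aN})$ and establishes that it is invariant (up to pointwise weak equivalence) under the choice of cofibrant and fibrant replacements. To upgrade this invariance to the enriched setting on $(\aD,\aD)$-bimodules, I would invoke the enriched two-variable Brown-type lemma of \cite[5.8]{LewisMandell2} for the adjunction $(-)\otimes_{\aD}(-)\dashv \Hom^{\ell}_{\aC}(-,-)$. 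The hypotheses reduce to (i) together with the observation that $\otimes_{\aD}$ preserves cofibrations in each variable when the other is cofibrant, which again follows from the semicofibrancy hypothesis on $\aD$ by inspection of the cellular description.

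For (iii), the statement is essentially a compatibility check: the same $\aM^c$ and $\aN^f$ used in (ii) serve, by (i), as cofibrant and fibrant replacements in the category of left $B_{\aC}$-objects, so the underlying bi-indexed spectrum of the derived functor from (ii) coincides with the construction of~\eqref{eq:derhom}, and the right $\aD$-module structures on $\aM^c$ and $\aN^f$ induce the same homotopical $(\aD,\aD)$-bimodule structure in both interpretations. The main obstacle in the whole argument is therefore (i); the difficulty there is not mathematical depth but careful bookkeeping with the conventions for bi-indices, the distinction between $\aC\otimes(-)$ as a free functor on left $\aC$-objects versus on bimodules, and the partial nature of the enrichment of bi-indexed spectra, which makes the direct application of standard monoidal model category arguments slightly delicate.
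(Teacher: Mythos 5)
Your proposal matches the paper's proof in structure and content: the paper establishes (i) from the explicit cellular description of cofibrations in $(\aC,\aD)$-bimodules and left $\aC$-objects (which is exactly where the pointwise semicofibrancy of $\aD$ enters, as you note), then verifies the criterion of \cite[5.4]{LewisMandell2} for the existence of the enriched right derived functor in (ii), and observes that (iii) is immediate by comparing the cofibrant/fibrant replacements with those used in Theorem~\ref{thm:derhom}. The only cosmetic difference is the citation: you invoke the parametrized-adjunction criterion \cite[5.8]{LewisMandell2} (the one used earlier for Theorem~\ref{thm:derhom}), whereas the paper's proof cites \cite[5.4]{LewisMandell2} for this step; both require essentially the verifications you describe.
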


\begin{proof}
The explicit description of the cofibrations shows that when
$\aD$ is pointwise semicofibrant, a cofibration of
$(\aC,\aD)$-bimodules forgets to a cofibration of left
$\aC$-objects. From here it is straightforward to check the conditions
of~\cite[5.4]{LewisMandell2} that ensure the existence of the enriched
right derived functor, and the comparison with the derived functor
of~\eqref{eq:derhom} is immediate.
\end{proof}

\section{Centralizer conditions, maps of $\CC$, and the proof of the
main theorem}\label{sec:pfstring}\label{sec:pfme}\label{sec:pfdcc}

In this section we begin the process of extending the functoriality of
$\CC$ by constructing zigzags associated to bimodules that satisfy
centralizer conditions that we review below.  We do enough work that
we can prove the main theorem of the introduction,
Theorem~\ref{main:string}, that gives an equivalence of $E_{2}$ ring
spectra for the two $\THC$ constructions commonly studied in string topology.
We also prove Theorems~\ref{main:me} and~\ref{main:dcc}.

We begin with the centralizer conditions.   

\begin{defn}\label{defn:dcc}
Let $\aC$ and $\aD$ be small spectral categories and let $\aF$ be a
$(\aC,\aD)$-bimodule.  The \term{centralizer map for $\aD$} is the map
in the category of homotopical $\aD$-modules
\[
\aD\to\bR\Hom^{\ell}_{\aC}(\aF,\aF)
\]
adjoint to the map $\aF\otimes\aD\to \aF$ where $\bR\Hom^{\ell}_{\aC}$
is as in~\eqref{eq:derhom} (that is, the right derived functor in
Theorem~\ref{thm:derhom}).  The \term{centralizer map for $\aC$} is
the analogous map
\[
\aC\to \bR\Hom^{r}_{\aD}(\aF,\aF).
\]
We say that:
\begin{enumerate}
\item $\aF$ satisfies the \term{double centralizer condition} when
both centralizer maps are weak equivalences.
\item $\aF$ satisfies the \term{single centralizer condition} for
$\aC$ or $\aD$ when the centralizer map for $\aC$ or $\aD$ (resp.) is
a weak equivalence.
\end{enumerate}
\end{defn}

We have the following motivating examples.

\begin{example}[DK-embeddings]\label{ex:dke}
If $\phi \colon \aD\to \aC$ is a spectral functor and $\aF$ is the
bimodule $\aF_{\phi}=\aC(\phi(-),-)$ then the enriched form of the Yoneda lemma
shows that $\Hom^{\ell}_{\aC}(\aF,\aF)$ is canonically isomorphic to
$\aC(\phi(-),\phi(-))$ and the centralizer map $\aD\to \Hom^{\ell}_{\aC}(\aF,\aF)$ is
the map $\phi\colon \aD(-,-)\to \aC(\phi(-),\phi(-))$; moreover,
$\aC(\phi(-),\phi(-))$ also represents 
the derived functor $\bR\Hom^{\ell}_{\aC}(\aF,\aF)$.  It follows that
$\aF$ satisfies the single 
centralizer condition for $\aD$ if and only if $\phi$ is a
DK-embedding.  Moreover, if $\phi$ is a DK-equivalence then the
enriched Yoneda lemma in the homotopy category shows that $\aC\to
\bR\Hom^{r}_{\aD}(\aF,\aF)$ is a weak equivalence and
$\aF$ satisfies the double centralizer condition.
\end{example}

\begin{example}[$\aD$ and $\perf(\aD)$]\label{ex:perf}
Let $\aD$ be a pointwise fibrant small spectral category, and $\aC$ be
a small full spectral subcategory of the category of right
$\aD$-modules consisting of only cofibrant-fibrant objects.  Assume
the Yoneda embedding factors $\phi
\colon \aD\to \aC$, and let $\aF=\aF_{\phi}$.  For example, $\aC=\perf(\aD)$ (for any large enough
cardinality) fits into this context.  Then the bimodule $\aF$
satisfies the double centralizer condition.  Since $\phi$ is a
DK-embedding, as per the previous example,  
$\aF$ satisfies the single centralizer condition for
$\aD$. To see that the centralizer map for $\aC$ is a weak
equivalence, we consider the map 
\[
\aC(x,y)\to \bR\Hom^{r}_{\aD}(\aF(-,x),\aF(-,y))
\]
for fixed $x,y$.  Recalling that $x$ and $y$ are $\aD$-modules, the
enriched Yoneda lemma gives isomorphisms
$x(d)\iso\aF(d,x)$ and $y(d)\iso\aF(d,y)$ for all $d$ in $\aD$, and
hence an isomorphism 
\[
\Hom^{r}_{\aD}(\aF(-,x),\aF(-,y))\iso \Hom^{r}_{\aD}(x(-),y(-))\iso\Mod_{\aD^{\op}}(x,y)=\aC(x,y)
\]
(see Proposition~\ref{prop:homover}).
Since we have assumed that $x$ and $y$ are cofibrant-fibrant right
$\aD$-modules, the point-set functor represents the right derived
functor, and we see that $\aF$ also satisfies the single centralizer
condition for $\aC$.
\end{example}

\begin{example}[Morita contexts]\label{ex:mc}
Let $\aM$ be a cofibrant $(\aC,\aD)$-bimodule (and we assume without
loss of generality that $\aC$ and $\aC$ are pointwise semicofibrant).
Then the left derived functor of $\aM\otimes_{\aD}(-)$ from the
derived category of $\aD$-modules
to the derived category of $\aC$-modules is an equivalence of homotopy
categories if and only
if $\aM\otimes_{\aD}(-)$ restricts to a DK-equivalence $\perf(\aD)\to
\perf(\aC)$ (for models of large enough cardinality).  When this
holds, the derived functor of the right adjoint
$\Hom^{\ell}_{\aC}(\aM,-)$ induces the inverse equivalence and
represents the right derived functor $\bR\Hom^{\ell}_{\aC}(\aM,-)$ in
Definition~\ref{defn:dcc}.  In particular, the unit of the
derived adjunction for $\aD$ is the centralizer map $\aD\to
\bR\Hom^{\ell}_{\aC}(\aM,\aM)$ and so $\aM$ satisfies the single
centralizer condition for $\aD$.  Although written in the context of
associative ring spectra, the proof of Theorem~4.1.2
of~\cite{SSStable} implies that there exists a cofibrant
$(\aD,\aC)$-bimodule $\aN$ such that $\aM\otimes_{\aD}\aN$ is weakly
equivalent to $\aC$ as a $(\aC,\aC)$-bimodule and
$\aN\otimes_{\aC}\aN$ is weakly equivalent to $\aD$ as a
$(\aD,\aD)$-bimodule.  It then follows that the left derived functor of
$(-)\otimes_{\aC}\aM$ from the derived category of right $\aC$-modules
to the derived category of right $\aD$-modules
is an equivalence of categories, which implies that $\aM$ satisfies
the single centralizer condition for $\aC$.  Thus, $\aM$ satisfies the
double centralizer condition.
\end{example}

\begin{example}[$DX$ and $\Omega X$]\label{ex:dxloop}
Let $X$ be a simply connected finite cell complex, or equivalently (up
to homotopy) the geometric realization of a reduced finite simplicial
set.  In~\cite[\S3]{BM-Koszul}, we consider the Kan loop group model
$GX$ for $\Omega X$ and describe an explicit $(\splus GX,DX)$-bimodule
$SP$ (whose underlying spectrum is equivalent to $\bS$) that we show
satisfies the double centralizer condition. 
(This example is originally due
Dwyer-Greenlees-Iyengar~\cite[4.22]{DwyerGreenleesIyengar-Duality}, at least
after extension of scalars to a field.)
\end{example}

To construct the zigzag, we use the following construction of
Keller~\cite[\S 4.5]{Keller-DIH}.

\begin{cons}
Let $\aC$ and $\aD$ be small spectral categories and $\aF$ a
$(\aC,\aD)$-bimodule.  Let $\Cat_{\aF}$ be the small spectral category with
objects $O(\aC)\amalg O(\aD)$, mapping spectra
\[
\Cat_{\aF}(a,b)=\begin{cases}
\aC(a,b)&a,b\in O(\aC)\\
*&a\in O(\aC), b\in O(\aD)\\
\aF(a,b)&a\in O(\aD), b\in O(\aC)\\
\aD(a,b)&a,b\in O(\aD)\\
\end{cases}
\]
with units coming from the units of $\aC$ and $\aD$, and composition
coming from the composition in $\aC$ and $\aD$ and the bimodule
structure of $\aF$.
\end{cons}

The construction comes with canonical strictly fully faithful spectral
functors $\aC\to \Cat_{\aF}$ and $\aD\to \Cat_{\aF}$, which by
Proposition~\ref{prop:limitednaturality} induce maps of
$\oD_{2}$-algebras
\[
\CC(\aD)\from \CC(\Cat_{\aF})\to \CC(\aC).
\]
The following theorem ties in the double centralizer condition.  We
prove it in Section~\ref{sec:pfcatzigzag}.

\begin{thm}\label{thm:catzigzag}
Assume $\aC$ and $\aD$ are pointwise relatively cofibrant and
pointwise fibrant small spectral categories and let $\aF$
be a pointwise semicofibrant-fibrant $(\aC,\aD)$-bimodule.
\begin{enumerate}
\item If $\aF$ satisfies the single centralizer condition for $\aD$, then
the map\break $\CC(\Cat_{\aF})\to \CC(\aC)$ is a weak equivalence. 
\item If $\aF$ satisfies the single
centralizer condition for $\aC$, then the map\break $\CC(\Cat_{\aF})\to
\CC(\aD)$ is a weak equivalence. 
\end{enumerate}
\end{thm}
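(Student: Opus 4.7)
The plan is to reinterpret the map $\CC(\Cat_{\aF})\to \CC(\aC)$ using the derived-functor perspective $\CC(\aE)\simeq \dR\Hom_{\aE^e}(\aE,\aE)$, which is valid under the stated pointwise cofibrancy and fibrancy hypotheses since the bar resolution provides a cofibrant replacement of $\aE$ as an $(\aE,\aE)$-bimodule. Under this perspective and the adjunction $i^e_!\dashv i^{e*}$ associated to the inclusion $i\colon\aC\hookrightarrow\Cat_{\aF}$, the map becomes
\[
\dR\Hom_{\Cat_{\aF}^e}(\Cat_{\aF},\Cat_{\aF})\to \dR\Hom_{\Cat_{\aF}^e}(i^e_!\aC,\Cat_{\aF})
\]
induced by the counit $i^e_!\aC\to \Cat_{\aF}$, so the theorem reduces to showing $\dR\Hom_{\Cat_{\aF}^e}(C,\Cat_{\aF})\simeq *$ for $C$ the homotopy cofiber of the counit.

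The second step exploits the ``lower-triangular'' structure of $\Cat_{\aF}$ (the absence of morphisms $O(\aC)\to O(\aD)$) to obtain a distinguished triangle in the derived category of $(\Cat_{\aF},\Cat_{\aF})$-bimodules relating $C$ to the derived extension $j^e_!\aD$ of $\aD$ along $j\colon\aD\hookrightarrow\Cat_{\aF}$ and a derived extension of the bimodule $\aF$ along the appropriate map of enveloping categories. Applying $\dR\Hom_{\Cat_{\aF}^e}(-,\Cat_{\aF})$ and computing via the extension-restriction adjunctions yields $\dR\Hom_{\Cat_{\aF}^e}(j^e_!\aD,\Cat_{\aF}) \simeq \dR\Hom_{\aD^e}(\aD,\aD) = \CC(\aD)$ for the $j^e_!\aD$-term, and for the $\aF$-extension term, $\dR\Hom^b_{(\aC,\aD)}(\aF,\aF)$, which via the tensor-Hom adjunction of Proposition~\ref{prop:scbimadj} equals $\CC(\aD;\dR\Hom^{\ell}_{\aC}(\aF,\aF))$. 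The induced connecting map $\CC(\aD)\to \CC(\aD;\dR\Hom^{\ell}_{\aC}(\aF,\aF))$ is $\CC(\aD;-)$ applied to the canonical centralizer map $\aD\to \dR\Hom^{\ell}_{\aC}(\aF,\aF)$.

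Under the single centralizer condition for $\aD$, the centralizer map is a weak equivalence of $(\aD,\aD)$-bimodules, and by the invariance of Hochschild cohomology in its pointwise fibrant bimodule coefficient (a consequence of the pointwise hypotheses), the induced map on Hochschild complexes is a weak equivalence. Tracing through the distinguished triangle gives $\dR\Hom_{\Cat_{\aF}^e}(C,\Cat_{\aF})\simeq *$, establishing statement~(i). Statement~(ii) follows by the symmetric argument applied to the $(\aD,\aC)$-bimodule $\aF^{\op}$ under the single centralizer condition for $\aC$.

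The main technical obstacle is the rigorous construction of the distinguished triangle expressing the cofiber $C$: the naive bi-indexed decomposition of $\Cat_{\aF}$ into a $\aC$-block, an $\aF$-block, and a $\aD$-block does not constitute a decomposition of $(\Cat_{\aF},\Cat_{\aF})$-bimodules (the right $\Cat_{\aF}$-action through $\aF$ sends the $\aC$-block outside itself), so the triangle must be built via a double bar-style resolution, and the identification of its connecting map with the canonical centralizer map requires careful tracking through the tensor-Hom adjunctions of Section~\ref{sec:rewrite}.
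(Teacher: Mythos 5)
Your proposal is correct in outline, and it is a genuinely different formalization of the same structural insight used in the paper. Both exploit the ``upper-triangular'' block structure of $\Cat_{\aF}$ to split the Hochschild complex into pieces indexed by $\aC$, $\aF$, and $\aD$, but you package this via derived Kan extensions and a cofiber sequence in the derived category of $(\Cat_{\aF},\Cat_{\aF})$-bimodules, whereas the paper works explicitly at the cosimplicial level: it constructs a map $\gamma\supdot\colon \CC\supdot(\Cat_{\aF})\to \Hom^{b}_{\aC,\aD}(\aR\subdot(\aF),\aF)$ where $\aR\subdot(\aF)$ is a bespoke simplicial resolution whose $n$-simplices are wedges over the position of the $\aF$ factor among $\aC$ and $\aD$ factors, then shows a homotopy cartesian square whose other three vertices decompose as products of terms attached to $\aC$, $\aD$, and $\aF$, and reads off the weak equivalence from a zigzag argument through that square. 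Your route trades the paper's explicit cosimplicial bookkeeping for extension-restriction adjunctions; the price is that you must verify the point-set Kan extensions and their counits represent the derived ones (plausible under the stated cofibrancy hypotheses, but not free), while the paper gets those checks as part of its Reedy-structure verifications.

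One substantive remark: your self-identified ``main technical obstacle'' is milder than you fear. Working it out at the point-set level, $i^{e}_{!}\aC$ is the $(\aC,\aF)$-block and $j^{e}_{!}\aD$ is the $(\aD,\aF)$-block, both of which are genuine sub-bimodules of $\Cat_{\aF}$ (as is the $\aF$-block itself, which is the left Kan extension of $\aF$ along $\aD^{\op}\sma\aC\to\Cat_{\aF}^{\op}\sma\Cat_{\aF}$); they intersect in the $\aF$-block and their union is $\Cat_{\aF}$, giving a pushout square and hence your distinguished triangle without any double bar resolution. The resolution is only needed to represent the derived counits as cofibrations so that the cofiber is a homotopy cofiber, which is exactly the role $\aR\subdot$ plays in the paper. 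Also, the ``symmetric argument'' for statement~(ii) is cleanest if you just run the same proof with the inclusion $j\colon \aD\hookrightarrow \Cat_{\aF}$ and the cofiber of $j^{e}_{!}\aD\to \Cat_{\aF}$ (which is the extension-by-zero of $\aC$), rather than passing through $\aF^{\op}$; as written your phrasing about ``the $(\aD,\aC)$-bimodule $\aF^{\op}$'' is imprecise since $\aF^{\op}$ is naturally a $(\aD^{\op},\aC^{\op})$-bimodule and $\Cat_{\aF^{\op}}$ has morphisms pointing the other way, so one would also need the canonical equivalence $\CC(\aE)\simeq\CC(\aE^{\op})$.
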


If we take for granted that a functor $\THC$ exists as in
Theorem~\ref{main:wefunct}, then the previous theorem combined with
the examples above gives just what we need to prove
Theorems~\ref{main:string}, \ref{main:me}, and~\ref{main:dcc}.  

\begin{proof}[Proof of Theorem~\ref{main:string}] As per the statement
of Theorem~\ref{main:wefunct}, for any associative ring spectrum $A$,
$\THC(A)$ may be constructed as $\CC(A')$ for an associative ring
spectrum $A'$ whose underlying spectrum is fibrant and for which the
inclusion of the unit $\bS\to A$ is a cofibration of spectra (e.g.,
applying cofibrant and fibrant replacement functors in category of
associative ring spectra).  Indeed, in all previous literature
discussing $\THC(DX)$ and $\THC(\splus \Omega X)$, this was always
done tacitly.  Using such a model $DX'$ for $DX$ and $R$ for $\splus
\Omega X$ (or $GX$ as in Example~\ref{ex:dxloop}), we have a cofibrant
bimodule $SP$ satisfying the double centralizer condition, as in the
example.  The required chain of weak equivalences of $E_{2}$ ring
spectra is then given by the zigzag
\[
\CC(DX')\from \CC(\Cat_{SP})\to \CC(R)
\]
of weak equivalences of $\oD_{2}$-algebras.
\end{proof}

\begin{proof}[Proof of Theorems~\ref{main:me} and~\ref{main:dcc}]
By Examples~\ref{ex:dke} and ~\ref{ex:perf}, Theorem~\ref{main:me} is
a special case of
Theorem~\ref{main:dcc}.  The proof of Theorem~\ref{main:dcc} is
identical to the special case given by Theorem~\ref{main:string}:
Apply both parts of Theorem~\ref{thm:catzigzag} to appropriate
pointwise relatively cofibrant-fibrant
replacements as in Proposition~\ref{prop:replace}.
\end{proof}

\section{The construction of $\THC$ (Proof of
Theorems~\ref{main:wefunct} and~\ref{main:dkfunct})} 
\label{sec:pfwefunct}\label{sec:pfdkfunct}\label{sec:inftyfunct}

The purpose of this section is to construct topological Hochschild
cohomology as a homotopical functor.  We begin by constructing $\THC$
as a functor on the homotopy category level from a subcategory of the
homotopy category of small
spectral categories to the homotopy
category of $E_{2}$ ring spectra.  Using work of
Lindsey~\cite{Lindsey-XYZ}, we then show that essentially the same
argument actually constructs $\THC$ as a functor from a subcategory of
the $(\infty,1)$-category $\Cat^{\ex}$ of small stable idempotent-complete
$(\infty,1)$-categories to the $(\infty,1)$-category of $E_{2}$ ring spectra.
Throughout, we work with quasicategories as a model for
$(\infty,1)$-categories and rely on the foundational setup of Joyal
and Lurie~\cite{Lurie-HTT, Lurie-HA}.

\begin{defn}\label{defn:THCobj}
For a small spectral category $\aC$, let
$\THC(\aC)=\CC(\aC^{\Cell,\Omega})$ where $\aC^{\Cell,\Omega}$ is the
functorial pointwise relatively cofibrant-fibrant replacement of
Proposition~\ref{prop:replace}.
\end{defn}

Given a DK-embedding $\phi \colon \aD\to \aC$, by functoriality we get a
DK-embedding $\tilde\phi\colon \aD^{\Cell,\Omega}\to \aC^{\Cell,\Omega}$ and the
bimodule $\aF_{\tilde\phi}$ representing this functor
(see Definition~\ref{defn:bifunc}, Theorem~\ref{thm:functor}) satisfies the single centralizer
condition for $\aC^{\Cell,\Omega}$ (q.v.~Example~\ref{ex:dke}).
Writing $\Cat_{\tilde\phi}$ as an abbreviation for
$\Cat_{\aF_{\tilde\phi}}$, Theorem~\ref{thm:catzigzag} then gives us a
zigzag of maps of $\oD_{2}$-algebras 
\begin{equation}\label{eq:1simp}
\CC(\aC^{\Cell,\Omega})\overfrom{\simeq}\CC(\Cat_{\tilde\phi})\to \CC(\aD), 
\end{equation}
which we interpret as a map in the homotopy category of $E_{2}$ ring
spectra 
\[
\THC(\aC)\to \THC(\aD).
\]
This gives the next step in the construction of $\THC$ as a functor, the
definition on maps.

\begin{defn}\label{defn:THCmorph}
For a DK-embedding $\phi \colon \aD\to \aC$, define $\THC(\phi)$ to be
the map $\THC(\aC)\to \THC(\aD)$ in the homotopy category of
$E_{2}$-ring spectra arising from the zigzag of~\eqref{eq:1simp}.
\end{defn}

To check that this definition respects composition and unit maps, we
use the following construction.

\begin{defn}\label{defn:nsimp}
Let $\phi_{1}\colon \aC_{0}\to \aC_{1}$,\dots, $\phi_{n}\colon
\aC_{n-1}\to \aC_{n}$ be a composable sequence of spectral functors.
Define $\Cat_{\phi_{1},\ldots,\phi_{n}}$ to be the small spectral category
with objects the disjoint union of the objects of $\aC_{i}$ for all
$i$ and with mapping spectra 
\[
\Cat_{\phi_{1},\ldots,\phi_{n}}(a,b) =
\begin{cases}
\aC_{j}(\phi_{i,j}(a),b)&i\leq j\\
*&i>j
\end{cases}
\]
for $a\in O(\aC_{i})$ and $b\in O(\aC_{j})$, where $\phi_{i,j}=\id$ if
$i=j$ and $\phi_{i,j}=\phi_{j-1}\circ \cdots \circ \phi_{i}$ for
$i<j$.  Composition is induced by composition in $\aC_{0}$,\dots,
$\aC_{n}$ and the functors $\phi_{i}$, and units come from the units in $\aC_{0}$,\dots,
$\aC_{n}$.  
\end{defn}

We note that for a single morphism, $\Cat_{\phi}$ is
$\Cat_{\aF_{\phi}}$ for the bimodule $\aF_{\phi}$ associated to
$\phi$, which is consistent with the notation we used
in~\eqref{eq:1simp}.  We deduce from Theorem~\ref{thm:catzigzag} the
following corollary.

\begin{cor}\label{cor:nsimp}
With notation as in Definition~\ref{defn:nsimp}, assume each $\aC_{i}$
is pointwise relatively cofibrant-fibrant and that $\phi_{1}$ is
a DK-embedding.  Then the inclusion of $\aC_{0}$ in
$\Cat_{\phi_{1},\ldots,\phi_{n}}$ induces a weak equivalence
$\CC(\Cat_{\phi_{1},\ldots,\phi_{n}})\to \CC(\aC_{0})$.
\end{cor}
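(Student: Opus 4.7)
The plan is to induct on $n$, at each step exhibiting $\Cat_{\phi_{1},\ldots,\phi_{n}}$ as a Keller glueing $\Cat_{\aF}$ (as in the construction preceding Theorem~\ref{thm:catzigzag}) so that a single instance of Theorem~\ref{thm:catzigzag} delivers the inductive step. In the base case $n=1$, the identification is immediate: by Theorem~\ref{thm:functor} the spectral functor $\phi_{1}\colon \aC_{0}\to \aC_{1}$ is represented by the $(\aC_{1},\aC_{0})$-bimodule $\aF_{\phi_{1}}=\aC_{1}(\phi_{1}(-),-)$, and Definition~\ref{defn:nsimp} gives $\Cat_{\phi_{1}}=\Cat_{\aF_{\phi_{1}}}$ on the nose. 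Example~\ref{ex:dke} translates the DK-embedding hypothesis on $\phi_{1}$ into the single centralizer condition for $\aC_{0}$ (in the role of ``$\aD$'' of Theorem~\ref{thm:catzigzag}), and that theorem then supplies the desired weak equivalence onto $\CC(\aC_{0})$.

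For $n\geq 2$, split the object set of $\Cat_{\phi_{1},\ldots,\phi_{n}}$ as $O(\aC_{0})\sqcup \bigsqcup_{i\geq 1}O(\aC_{i})$. Inspection of Definition~\ref{defn:nsimp} shows that the full spectral subcategory on the second block is exactly $\Cat_{\phi_{2},\ldots,\phi_{n}}$, while the mapping spectra from the first block to the second (with the reverse direction trivial) assemble into the bimodule $\aF_{\Phi}$ associated via Definition~\ref{defn:bifunc} to the composite spectral functor $\Phi\colon \aC_{0}\xrightarrow{\phi_{1}}\aC_{1}\hookrightarrow \Cat_{\phi_{2},\ldots,\phi_{n}}$. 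This exhibits $\Cat_{\phi_{1},\ldots,\phi_{n}}$ as the Keller glueing $\Cat_{\aF_{\Phi}}$ with $\aD=\aC_{0}$ and $\aC=\Cat_{\phi_{2},\ldots,\phi_{n}}$. Since $\phi_{1}$ is a DK-embedding and the inclusion $\aC_{1}\hookrightarrow \Cat_{\phi_{2},\ldots,\phi_{n}}$ is strictly fully faithful, the composite $\Phi$ is again a DK-embedding; Example~\ref{ex:dke} produces the single centralizer condition for $\aC_{0}$, and Theorem~\ref{thm:catzigzag} then delivers the claimed weak equivalence $\CC(\Cat_{\phi_{1},\ldots,\phi_{n}})\to \CC(\aC_{0})$.

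The main technical obstacle will be verifying the cofibrancy and fibrancy hypotheses required by Theorem~\ref{thm:catzigzag}: the category $\Cat_{\phi_{2},\ldots,\phi_{n}}$ must be pointwise relatively cofibrant-fibrant and the bimodule $\aF_{\Phi}$ must be pointwise semicofibrant-fibrant. Since each mapping spectrum of $\Cat_{\phi_{2},\ldots,\phi_{n}}$ is of the form $\aC_{j}(\phi_{i,j}(a),b)$ and each unit map is inherited from one of the $\aC_{j}$'s, the hypothesis that each $\aC_{i}$ is pointwise relatively cofibrant-fibrant propagates directly to $\Cat_{\phi_{2},\ldots,\phi_{n}}$, and thence to $\aF_{\Phi}$ via the identification $\aF_{\Phi}(c_{0},b)=\Cat_{\phi_{2},\ldots,\phi_{n}}(\phi_{1}(c_{0}),b)$.
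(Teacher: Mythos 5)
Your proposal is correct and takes essentially the same route as the paper's proof: both identify $\Cat_{\phi_{1},\ldots,\phi_{n}}$ with the Keller glueing $\Cat_{\aF_{\psi}}$ for the composite spectral functor $\psi = \Phi\colon \aC_{0}\overto{\phi_{1}}\aC_{1}\hookrightarrow\Cat_{\phi_{2},\ldots,\phi_{n}}$, observe that $\psi$ is a DK-embedding because $\phi_{1}$ is, and apply Theorem~\ref{thm:catzigzag} once. The ``induction on $n$'' framing is cosmetic --- your $n\geq 2$ step never invokes the corollary at lower $n$ --- so this is really a single reduction plus a degenerate base case, just as in the paper. One small point to double-check in the hypothesis verification: the claim that pointwise relative cofibrancy ``propagates directly'' to $\Cat_{\phi_{2},\ldots,\phi_{n}}$ is not literally automatic, because a cross mapping spectrum $\aC_{j}(\phi_{i,j}(a),b)$ between \emph{distinct} objects $a,b$ of the glued category can collapse to an endomorphism spectrum $\aC_{j}(b,b)$ when $\phi_{i,j}(a)=b$, and such spectra are only guaranteed to be cofibrant relative to the unit (hence semicofibrant), not cofibrant --- a distinction that matters in EKMM $S$-modules where $\bS$ is not cofibrant. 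This wrinkle is also passed over silently in the paper's proof, so it is not a gap you introduced, but it is worth flagging rather than asserting that the propagation is immediate.
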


\begin{proof}
Let $\psi\colon \aC_{0}\to \Cat_{\phi_{2},\ldots,\phi_{n}}$ be the
composite of $\phi_{1}$ with the inclusion of $\aC_{1}$ in
$\Cat_{\phi_{2},\ldots,\phi_{n}}$.  We then have a canonical
isomorphism of small spectral categories from $\Cat_{\psi}$ to
$\Cat_{\phi_{1},\ldots,\phi_{n}}$.  Since $\phi_{1}$ is a
DK-embedding, so is $\psi$, and Theorem~\ref{thm:catzigzag} implies
that the induced map $\CC(\Cat_{\psi})\to \CC(\aC_{0})$ is a weak
equivalence.
\end{proof}

We can now prove Theorems~\ref{main:wefunct} and~\ref{main:dkfunct}.

\begin{proof}[Proof of Theorems~\ref{main:wefunct}
and~\ref{main:dkfunct}]
The proofs of the two theorems are essentially the same; for
the proof of Theorem~\ref{main:wefunct}, simply restrict to the
subcategory of small spectral categories consisting of the associative ring
spectra. (Note that even in the case of Theorem~\ref{main:wefunct}, the
argument still requires use of $\CC$ of small spectral categories, namely,
the small spectral categories $\Cat_{\aF_{\phi}}$.) 

We have defined $\THC$ on objects and morphisms in
Definitions~\ref{defn:THCobj} and~\ref{defn:THCmorph}; we need to show
that $\THC$ preserves composition and units.  Given $\phi_{1}\colon
\aC_{0}\to \aC_{1}$ and $\phi_{2}\colon \aC_{1}\to \aC_{2}$, let
$\tilde\phi_{1}$ and $\tilde\phi_{2}$ denote the induced functors on
$\aC_{i}^{\Cell,\Omega}$.  We then have the following strictly
commuting diagram of strictly fully faithful morphisms 
\[
\xymatrix@-1.25pc{%
&&\aC^{\Cell,\Omega}_{2}\ar[dl]\ar[dr]\\
&\Cat_{\tilde\phi_{2}\circ \tilde\phi_{1}}\ar[r]
&\Cat_{\tilde\phi_{1},\tilde\phi_{2}}
&\Cat_{\tilde\phi_{2}}\ar[l]\\
\aC^{\Cell,\Omega}_{0}\ar[rr]\ar[ur]&&\Cat_{\tilde\phi_{1}}\ar[u]&&\aC^{\Cell,\Omega}_{1}\ar[ll]\ar[ul]
}
\]
from which we get the following commutative diagram of
$\oD_{2}$-algebras.
\[
\xymatrix@-1.25pc{%
&&\CC(\aC^{\Cell,\Omega}_{2})\\
&\CC(\Cat_{\tilde\phi_{2}\circ \tilde\phi_{1}})\ar[ur]\ar[dl]_{\sim}
&\CC(\Cat_{\tilde\phi_{1},\tilde\phi_{2}})\ar[l]_{\sim}\ar[r]\ar[d]_{\sim}
&\CC(\Cat_{\tilde\phi_{2}})\ar[ul]\ar[dr]^{\sim}\\
\CC(\aC^{\Cell,\Omega}_{0})
&&\CC(\Cat_{\tilde\phi_{1}})\ar[ll]^{\sim}\ar[rr]
&&\CC(\aC^{\Cell,\Omega}_{1})
}
\]
The arrows marked with ``$\sim$'' are weak equivalences by
Theorem~\ref{thm:catzigzag}, Corollary~\ref{cor:nsimp}, and the
2-out-of-3 property.  Since $\THC(\phi_{1})$, $\THC(\phi_{2})$, and
$THC(\phi_{2}\circ \phi_{1})$ are defined by the outer zigzags in the
diagram above, we see that 
\[
\THC(\phi_{2}\circ \phi_{1})=\THC(\phi_{1})\circ \THC(\phi_{2}).
\]
Although $\THC(\id_{\aC})$ is not defined to be the identity map, part~(ii)
of Theorem~\ref{thm:catzigzag} shows that $\THC(\id_{\aC})$ is an
isomorphism (in the homotopy category), which together with the fact
just shown that $\THC(\id_{\aC})=\THC(\id_{\aC})\circ \THC(\id_{\aC})$, proves that
$\THC(\id_{\aC})$ is the identity map for any small spectral category $\aC$.
\end{proof}

Finally, we prove Theorem~\ref{main:inf} by explaining how to refine
$\THC$ into an functor of $\infty$-categories. 
For the source, for simplicity, we take the nerve of the category of
small spectral categories and DK-embeddings, $\aS\Cat^{DK}$; the
functor will take DK-equivalences (and indeed Morita
equivalences) to equivalences in the target, and so one can from there
factor through an $\infty$-categorical Bousfield localization.  For the target
category, we will use the homotopy coherent nerve of a
pointwise fibrant replacement of the Dwyer-Kan hammock localization of
the category of $\oD_{2}$-algebras, $N^{hc}L\aS[\oD_{2}]$.  We do not
get a point-set map of quasicategories, however, because although our
construction above takes morphisms of small spectral categories to zigzags
of $\oD_{2}$-algebras, which are honest morphisms in the hammock
localization, it does not preserve composition strictly.  If we think
in terms of zigzags in the original category of $\oD_{2}$-algebras,
the construction $\CC(\Cat_{\phi_{1},\ldots,\phi_{n}})$ gives
$n$-simplex zigzags associated to a sequence of composable
morphisms. Zachery Lindsey studied this kind of $\infty$-functoriality
in his 2018 Indiana University thesis~\cite{Lindsey-XYZ}; in the
notation there, we construct a map 
\[
N(\aS\Cat^{DK})\to \Zig(N^{hc}L\aS[\oD_{2}],N^{hc}L\aS[\oD_{2}]^{\simeq})
\]
as follows.
\begin{enumerate}
\setcounter{enumi}{-1}
\item A $0$-simplex of $N(\aS\Cat^{DK})$ is a small spectral category
$\aC$; it maps to $\CC(\aC^{\Cell,\Omega})$.
\item A $1$-simplex of $N(\aS\Cat^{DK})$ is a DK-embedding $\phi
\colon \aC_{0}\to \aC_{1}$; it maps to the zigzag~\eqref{eq:1simp}.
\item In general, an $n$-simplex consists of $n$-composable
DK-embeddings\break $\phi_{i}\colon \aC_{i-1}\to \aC_{i}$; it maps to
the $n$-simplex zigzag for $\Cat_{\phi_{1},\ldots,\phi_{n}}$
generalizing the $2$-simplex zigzag pictured in the proof of
Theorem~\ref{main:dkfunct}. 
\end{enumerate}

Although both the source and target simplicial sets are
quasicategories, this is not a map of quasicategories because it only
preserves face maps and not degeneracy maps.  The work of
Steimle~\cite{Steimle-Degeneracies} (see Theorems~1.2 and~1.4) allows
us to correct this to construct a functor from
$N(\aS\Cat^{DK})$ to $\Zig(N^{hc}L\aS[\oD_{2}],N^{hc}L\aS[\oD_{2}]^{\simeq})$.

Lindsey~\cite{Lindsey-XYZ} shows that the inclusion of a quasicategory $\aQ$ in the
quasicategory $\Zig(\aQ,\aQ^{\simeq})$ is a categorical equivalence.
This then proves the following theorem.

\begin{thm}
The preceding construction constructs a zigzag of maps of quasicategories
from $N\aS\Cat^{DK}$ to $N^{hc}L\aS[\oD_{2}]$,
providing a functor $\THC$ from the category of small
spectral categories and DK-embeddings to the category of $E_{2}$ ring
spectra that sends Morita equivalences to weak equivalences.
\end{thm}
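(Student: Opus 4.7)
The strategy is to assemble the preceding $n$-simplex zigzag construction into a genuine quasicategorical functor by addressing the failure of degeneracy preservation and then inverting the Lindsey equivalence; the homotopical properties then follow formally from what has already been established.

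First, I would verify that the assignment sending an $n$-simplex of $N(\aS\Cat^{DK})$ (a composable chain $\phi_i\colon \aC_{i-1}\to \aC_i$ of DK-embeddings) to the $n$-simplex zigzag for $\CC(\Cat_{\phi_1,\ldots,\phi_n}^{\Cell,\Omega})$ is compatible with face maps. The inner faces (for $0<i<n$) correspond to composing adjacent functors, and the canonical inclusion $\Cat_{\phi_1,\ldots,\phi_{i+1}\circ\phi_i,\ldots,\phi_n}\to \Cat_{\phi_1,\ldots,\phi_n}$ is a strictly fully faithful spectral functor, hence yields a map of $\oD_2$-algebras by Proposition~\ref{prop:limitednaturality}. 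The outer faces correspond to deleting $\aC_0$ or $\aC_n$ from the chain, and are handled by the analogous strictly fully faithful inclusions between the $\Cat_{\phi_1,\ldots,\phi_n}$ constructions. Corollary~\ref{cor:nsimp} guarantees that the arrows that need to be weak equivalences in the $n$-simplex zigzag are indeed weak equivalences, so that each $n$-simplex lies in $\Zig(N^{hc}L\aS[\oD_2],N^{hc}L\aS[\oD_2]^{\simeq})$.

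Second, this construction is only semi-simplicial since the fibrant replacement $\Cat_{\phi_1,\ldots,\phi_n}^{\Cell,\Omega}$ does not strictly commute with inserting identity functors as degeneracies. I would invoke Steimle's Theorems~1.2 and~1.4~\cite{Steimle-Degeneracies}, which allow replacing a semi-simplicial map between quasicategories by an honest simplicial map after a categorical equivalence on the source; this produces a genuine map of quasicategories from (a replacement of) $N(\aS\Cat^{DK})$ to $\Zig(N^{hc}L\aS[\oD_2],N^{hc}L\aS[\oD_2]^{\simeq})$. Composing with a quasi-inverse to Lindsey's categorical equivalence $N^{hc}L\aS[\oD_2]\hookrightarrow \Zig(N^{hc}L\aS[\oD_2],N^{hc}L\aS[\oD_2]^{\simeq})$~\cite{Lindsey-XYZ} then yields, as a zigzag of quasicategory maps, the required functor $\THC\colon N\aS\Cat^{DK}\to N^{hc}L\aS[\oD_2]$.

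Finally, that Morita equivalences are carried to equivalences in $N^{hc}L\aS[\oD_2]$ reduces to showing this on $1$-simplices, which is exactly the content of Theorem~\ref{main:me}; the full $\infty$-categorical statement then follows by Bousfield localization of the source at Morita equivalences. The step I expect to require the most care is the compatibility check for faces together with the degeneracy correction via Steimle: one needs the functorial replacements $(-)^{\Cell,\Omega}$ and $\Cat_{(-)}$ to assemble into a strict semi-simplicial object in the category of small spectral categories (before passing to $\CC$), so that the Steimle machinery has clean input. Provided this is verified directly from Proposition~\ref{prop:replace} and the explicit formula for $\Cat_{\phi_1,\ldots,\phi_n}$ in Definition~\ref{defn:nsimp}, the remainder of the argument is essentially formal.
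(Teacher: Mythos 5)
Your proposal follows the paper's approach: build a semi-simplicial map to $\Zig(N^{hc}L\aS[\oD_2], N^{hc}L\aS[\oD_2]^{\simeq})$ via the $n$-simplex zigzags attached to $\Cat_{\phi_1,\ldots,\phi_n}$, correct the failure of degeneracy preservation using Steimle, and then invert Lindsey's categorical equivalence; the treatment of Morita equivalences via $1$-simplices and Bousfield localization also matches. One small correction to your diagnosis of the obstruction: the failure to preserve degeneracies is not caused by the fibrant replacement $(-)^{\Cell,\Omega}$, which is functorial and hence takes identity spectral functors to identities; rather it is intrinsic to the $\Cat$ construction itself, since $\Cat_{\phi_1,\ldots,\id_{\aC_i},\ldots,\phi_n}$ has object set containing $O(\aC_i)$ as two disjoint summands and so is not isomorphic to the degenerate simplex of the shorter chain --- this is precisely the defect that Steimle's theorem is invoked to repair.
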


Since $\THC$ sends Morita equivalences to weak equivalences, it
factors through the Bousfield localization of $N\aS\Cat^{DK}$ at the
Morita equivalences; using the equivalence of~\cite[4.23]{BGT} between
the localization of $N\aS\Cat$ at the Morita equivalences and
$\Cat^{\ex}$ then proves Theorem~\ref{main:inf}.

\section{Proof of Theorem~\ref{thm:catzigzag}}\label{sec:pfcatzigzag}

This section is devoted to the proof of Theorem~\ref{thm:catzigzag}.
The basic idea is to compare $\CC(\Cat_{\aF})$ to a construction of
the form $\Hom^{b}_{\aC,\aD}(\aR,\aF)$, where $\aR$ is a certain
simplicial object resolving $\aF$.  We start with the following
simplicial construction.

\begin{cons}
Let $\aC$ and $\aD$ be small spectral categories and 
let $\aG$ be a $(\aC,\aD)$-bimodule.  The simplicial
$(\aC,\aD)$-bimodule $\aR\subdot(\aC;\aG;\aD)$ is defined by
\[
\aR_{n}(\aC;\aG;\aD)=\bigvee_{j=0,\ldots,n+1}
\underbrace{\aC\otimes \cdots \otimes \aC}_{j\text{ factors}}
\otimes \aG\otimes 
\underbrace{\aD\otimes \cdots \otimes \aD}_{n+1-j\text{ factors}}
\]
(a total of $n+2$ summands each with $n+2$ factors) where the face map $d_{i}$ multiplies the
$i$th and $(i+1)$st factors using the multiplication of $\aC$ or $\aD$
or action on $\aG$ and the degeneracy map $s_{i}$ is induced by the map
$\bS_{O(\aC)}\to \aC$ in the $(i+1)$st factor on the $j$th summand for
$i<j$ and induced by the map $\bS_{O(\aD)}\to \aD$ in the $(i+1)$st factor on the $j$th summand for
$i\geq j$.  We write $\aR\subdot(\aG)$ when $\aC$ and $\aD$ are clear,
and we write $\aR(\aG)$ for the geometric realization.
\end{cons}

For example, $\aR_{0}(\aG)=\aG\otimes \aD\vee \aC\otimes \aG$ and the
degeneracy map $s_{0}$ is 
\[
\aG\otimes \aD\iso \aG\otimes \bS_{O(\aD)}\otimes \aD\to \aG\otimes \aD\otimes \aD
\]
on the $0$th summand and 
\[
\aC\otimes \aG\iso \aC\otimes \bS_{O(\aC)}\otimes \aG\to \aC\otimes \aC\otimes \aG
\]
on the $1$st summand.

We have an augmentation map of $(\aC,\aD)$-bimodules
$\epsilon \colon \aR\subdot(\aG)\to \aG$ induced by multiplying all the $\aC$ and
$\aD$ factors through.

\begin{prop}\label{prop:epsilon}
The augmentation $\epsilon \colon \aR\subdot(\aG)\to \aG$ is a
homotopy equivalence of simplicial bi-indexed spectra.
\end{prop}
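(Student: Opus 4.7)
The plan is to prove that $\epsilon$ is a simplicial homotopy equivalence by constructing an extra degeneracy on the augmented simplicial bi-indexed spectrum $\aR\subdot(\aG)\to \aG$; since an augmented simplicial object equipped with an extra degeneracy is simplicially contractible onto its augmentation (see, e.g., Mac Lane's \emph{Homology}, VIII.6), this yields the desired result. For $n\geq -1$, with the convention $\aR_{-1}(\aG)=\aG$, I define the extra degeneracy $s_{-1}\colon \aR_n(\aG)\to \aR_{n+1}(\aG)$ on the $j$th wedge summand by inserting a unit of $\aC$ in the leftmost tensor position,
\[
\aC^{\otimes j}\otimes \aG\otimes \aD^{\otimes(n+1-j)}\iso \bS_{O(\aC)}\otimes \aC^{\otimes j}\otimes \aG\otimes \aD^{\otimes(n+1-j)}\to \aC^{\otimes(j+1)}\otimes \aG\otimes \aD^{\otimes(n+1-j)},
\]
landing in the $(j+1)$th summand of $\aR_{n+1}(\aG)$; at level $-1$ this specializes to the map $\aG\iso \bS_{O(\aC)}\otimes \aG\to \aC\otimes \aG$ landing in the $j=1$ summand of $\aR_0(\aG)$.

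To verify the extra-degeneracy identities, I first note that the summands of $\aR_n(\aG)$ are in bijection with the elements of $\Delta[1]_n$: the $j$th summand corresponds to the unique monotone map $[n]\to [1]$ whose preimage of $0$ has size $j$. Under this bookkeeping the face $d_i$ sends summand $j$ to summand $j-1$ for $i<j$ and to summand $j$ for $i\geq j$; the degeneracy $s_i$ sends summand $j$ to summand $j+1$ for $i<j$ and to summand $j$ for $i\geq j$; and $s_{-1}$ shifts $j\mapsto j+1$. The identity $\epsilon\circ s_{-1}=\id_\aG$ is immediate from the left unit axiom for the action of $\aC$ on $\aG$. For $d_0\circ s_{-1}=\id$, observe that $s_{-1}$ places the $\aC$-unit in position $0$ and $d_0$ multiplies positions $0$ and $1$: when $j\geq 1$ this multiplies the inserted unit against the original first $\aC$-factor, and when $j=0$ it multiplies the inserted unit against $\aG$ via the left $\aC$-action, and in both cases the unit axiom collapses this to the identity. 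The remaining identities $d_i\circ s_{-1}=s_{-1}\circ d_{i-1}$ for $i\geq 1$ and $s_i\circ s_{-1}=s_{-1}\circ s_{i-1}$ for $i\geq 0$ follow because these face and degeneracy operators act only on positions $\geq 1$ and so leave the inserted unit at position $0$ untouched, while the summand-index bookkeeping matches by the shift-by-one rule above.

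The only point requiring care is the tracking of summand indices under $s_{-1}$ and under the face and degeneracy maps, which the $\Delta[1]$-indexing makes transparent; with the extra-degeneracy identities in hand, the standard contracting-homotopy formulas produce the required simplicial map $\eta\colon \aG\to \aR\subdot(\aG)$ with $\epsilon\circ \eta=\id_{\aG}$ together with a simplicial homotopy from $\eta\circ \epsilon$ to the identity. Beyond the indexing bookkeeping, no serious obstacle arises.
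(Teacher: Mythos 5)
Your proposal is correct and takes essentially the same approach as the paper: both define the extra degeneracy $s_{-1}$ by inserting the unit $\bS_{O(\aC)}\to\aC$ at the leftmost tensor position (shifting the $j$th summand to the $(j+1)$st) and conclude by the standard fact that an augmented simplicial object with an extra degeneracy is simplicially contractible onto its augmentation (the paper cites Meyer's theorem, \cite[4.5.1]{Riehl-CategoricalBook}). You spell out the summand bookkeeping and the identities $d_0 s_{-1}=\id$, $d_i s_{-1}=s_{-1}d_{i-1}$, $s_i s_{-1}=s_{-1}s_{i-1}$ in more detail than the paper, but the argument is the same.
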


\begin{proof}
In the category of bi-indexed spectra, the simplicial object
$\aR\subdot(\aG)$ has an ``extra degeneracy'' in the sense of
\cite[\S4.5]{Riehl-CategoricalBook}: Define $s_{-1}\colon
\aR_{n}(\aG)\to \aR_{n+1}\aG$ to be the map
\[
\aR_{n}(\aG)\iso \bS_{O(\aC)}\otimes \aR_{n}(\aG)\to \aC\otimes
\aR_{n}(\aG)\subset \aR_{n+1}(\aG).
\]
These maps satisfy
\begin{align*}
s_{-1}s_{i}&=s_{i+1}s_{-1}
&s_{-1}d_{i}&=d_{i+1}s_{-1}\\
s_{0}s_{-1}&=s_{-1}s_{-1}
&d_{0}s_{-1}&=\id.
\end{align*}
The map $s\colon \aG\to \aR_{0}(\aG)$ given by
\[
\aG\iso \bS_{O(\aC)}\otimes \aG\to \aC\otimes \aG\subset \aR_{0}(\aG)
\]
splits the map $\epsilon$ and (with $s_{-1}$) exhibits $\epsilon$ as
the split coequalizer of $d_{0},d_{1}\colon \aR_{1}(\aG)\to
\aR_{0}(\aG)$.  Meyer's theorem~\cite[4.5.1]{Riehl-CategoricalBook}
now gives the result.
\end{proof}

The Reedy model structures on simplicial and cosimplicial spectra are
convenient for identifying when maps of simplicial spectra realize to
cofibrations and when maps of cosimplicial spectra $\Tot$ to
fibrations.  The following proposition follows the usual outline of
similar results, which are proved from the pushout-product
property of the smash product of spectra and the construction of the
latching object of a simplicial spectrum as a sequence of pushouts.

\begin{prop}
If $\aC$ and $\aD$ are pointwise relatively cofibrant small spectral
categories and $\aG$ is a cofibrant $(\aC,\aD)$-bimodule then the
geometric realization of $\aR\subdot(\aG)$ is a cofibrant
$(\aC,\aD)$-bimodule and for every fibrant $(\aC,\aD)$-bimodule, the
cosimplicial spectrum $\Hom^{b}_{\aC,\aD}(\aR\subdot(\aG),\aF)$ is
Reedy fibrant. 
\end{prop}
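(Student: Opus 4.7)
The plan is to reduce both conclusions to Reedy cofibrancy of $\aR\subdot(\aG)$ in the model category of $(\aC,\aD)$-bimodules. Given Reedy cofibrancy: (a) the geometric realization $\aR(\aG)$ is cofibrant because the topological enrichment of the bimodule model structure makes $|-|$ a left Quillen functor from the Reedy model category of simplicial bimodules; (b) for fibrant $\aF$, the matching object at cosimplicial level $n$ of $\Hom^{b}_{\aC,\aD}(\aR\subdot(\aG),\aF)$ is naturally $\Hom^{b}_{\aC,\aD}(L_{n}\aR\subdot(\aG),\aF)$ (as $\Hom^{b}_{\aC,\aD}(-,\aF)$ converts colimits to limits), so the matching map is the contravariant right Quillen functor $\Hom^{b}_{\aC,\aD}(-,\aF)$ applied to the latching cofibration, hence a fibration by Axiom SM7 and the fibrancy of $\aF$.

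The core task is the verification of Reedy cofibrancy. To identify the latching object, I would trace the degeneracies: the map $s_{i}\colon \aR_{n-1}(\aG)\to \aR_{n}(\aG)$ sends the $j$th summand into the $(j{+}1)$st summand of $\aR_{n}(\aG)$ (by inserting $\bS_{O(\aC)}\hookrightarrow \aC$) when $i<j$, and into the $j$th summand (by inserting $\bS_{O(\aD)}\hookrightarrow \aD$) when $i\geq j$. Consequently, at the $k$th summand $\aC^{\otimes k}\otimes \aG\otimes \aD^{\otimes(n+1-k)}$ of $\aR_{n}(\aG)$, the latching inclusion is the iterated pushout-product of the cofibrations $\bS_{O(\aC)}\hookrightarrow \aC$ in the $k{-}1$ interior $\aC$-positions and $\bS_{O(\aD)}\hookrightarrow \aD$ in the $n{-}k$ interior $\aD$-positions, with the outermost $\aC$-factor, the central $\aG$-factor, and the outermost $\aD$-factor held fixed.

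For $1\leq k\leq n$, I would exhibit the $k$th summand as $\aC\otimes Y\otimes \aD$, the value of the left Quillen free $(\aC,\aD)$-bimodule functor on the bi-indexed spectrum $Y=\aC^{\otimes(k-1)}\otimes \aG\otimes \aD^{\otimes(n-k)}$, with the latching inclusion corresponding to an iterated pushout-product in bi-indexed spectra. Pointwise relative cofibrancy guarantees that $\bS\to \aC(c,c)$ and $\bS\to \aD(d,d)$ are cofibrations of spectra; the off-diagonal mapping spectra are cofibrant; and $\aG$ is pointwise cofibrant, since cofibrant bimodules over pointwise semicofibrant categories are pointwise cofibrant (and pointwise relative cofibrancy implies pointwise semicofibrancy). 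The pushout-product axiom for $\sma$ in spectra then yields a cofibration of spectra at each bi-index, hence a cofibration of bi-indexed spectra, preserved by the free bimodule functor. The extremal summands $k=0$ and $k=n{+}1$ are handled by the analogous one-sided free functor acting on $\aG$ through its one-sided module structure.

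The main obstacle is the combinatorial bookkeeping of the latching images summand-by-summand and verifying that the interior unit insertions are internal to the free bimodule functor, so that the latching inclusion genuinely factors as the free functor applied to a pushout-product in bi-indexed spectra rather than only coinciding with it pointwise; once this is in place, the proposition reduces to a routine application of the pushout-product axiom for $\sma$ in spectra.
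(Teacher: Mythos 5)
Your proposal is correct and tracks the approach the paper sketches: the paper itself declines to spell out the argument, saying only that it ``follows the usual outline of similar results, which are proved from the pushout-product property of the smash product of spectra and the construction of the latching object of a simplicial spectrum as a sequence of pushouts.'' Your identification of the latching inclusion at the $k$th summand as the iterated pushout-product of the unit inclusions in the $n-1$ interior positions (with the outermost $\aC$, central $\aG$, and outermost $\aD$ fixed), the reduction of both conclusions to Reedy cofibrancy of $\aR\subdot(\aG)$ via the spectral enrichment (Axiom SM7), and the use of pointwise relative cofibrancy to feed the pushout-product axiom are exactly the ingredients the paper intends; you have simply made the ``usual outline'' explicit.
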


The same kind of observation applied to the two-sided bar construction
$B(\aC;\aC;\aC)$ in the construction of $\CC$ proves
the following proposition.

\begin{prop}\label{prop:hypreedy}
If $\aC$ is a pointwise relatively cofibrant small spectral category
and $\aM$ is a fibrant $(\aC,\aC)$-bimodule, then the cosimplicial
spectrum $\CC\supdot(\aC,\aM)$ is Reedy fibrant.
\end{prop}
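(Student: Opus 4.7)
The plan is to parallel the proof of the preceding proposition by reducing Reedy fibrancy of $\CC\supdot(\aC,\aM)$ to Reedy cofibrancy of the two-sided bar construction $B\subdot(\aC;\aC;\aC)$ as a simplicial $(\aC,\aC)$-bimodule.  By the adjunction of Proposition~\ref{prop:scbimadj}, the matching object in cosimplicial degree $n$ is canonically isomorphic to
\[
M^{n}\CC\supdot(\aC,\aM)\iso \Hom^{b}_{\aC,\aC}(L_{n}B\subdot(\aC;\aC;\aC),\aM),
\]
and the matching map $\CC^{n}\to M^{n}\CC\supdot$ is obtained by applying $\Hom^{b}_{\aC,\aC}(-,\aM)$ to the latching map.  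Because the category of $(\aC,\aC)$-bimodules is a spectrally enriched model category and $\aM$ is fibrant, the enrichment axiom ensures that $\Hom^{b}_{\aC,\aC}(-,\aM)$ carries cofibrations of bimodules to fibrations of spectra; it therefore suffices to show that each latching map $L_{n}B\subdot\to B_{n}$ is a cofibration of $(\aC,\aC)$-bimodules.

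First, the pointwise relatively cofibrant hypothesis on $\aC$ implies that the unit $u\colon \bS_{O(\aC)}\to \aC$ is a cofibration of bi-indexed spectra: at a diagonal bi-index $(c,c)$ it is the cofibration $\bS\to \aC(c,c)$, and at an off-diagonal bi-index $(c,d)$ it is $*\to \aC(c,d)$ with $\aC(c,d)$ cofibrant.  Let $F=\aC\otimes(-)\otimes\aC$ denote the free bimodule functor from bi-indexed spectra to $(\aC,\aC)$-bimodules.  Since $F$ is left adjoint to the forgetful functor, which preserves pointwise fibrations and pointwise weak equivalences, $F$ is a left Quillen functor and in particular preserves colimits and cofibrations.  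Since $B_{n}(\aC;\aC;\aC)=F(\aC^{\otimes n})$ and the simplicial degeneracies of $B\subdot$ act by inserting $u$ in the middle factors, $F$ identifies the latching map $L_{n}B\subdot\to B_{n}$ with the result of applying $F$ to the latching map $L_{n}(\aC^{\otimes\subdot})\to \aC^{\otimes n}$ of the simplicial bi-indexed spectrum $\aC^{\otimes\subdot}$ whose degeneracies are given by unit insertions.

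By the standard combinatorial analysis of bar resolutions, the latching map $L_{n}(\aC^{\otimes\subdot})\to \aC^{\otimes n}$ is precisely the iterated pushout-product $u^{\Box n}$ of $u$ with itself in the partial monoidal category of bi-indexed spectra.  The pushout-product axiom for bi-indexed spectra reduces at each bi-index, via the formula $(\aX\otimes\aY)(a,c)=\bigvee_{b}\aX(a,b)\sma\aY(b,c)$, to the pushout-product axiom for spectra, so $u^{\Box n}$ is a cofibration of bi-indexed spectra, and applying the left Quillen functor $F$ yields the required cofibration $L_{n}B\subdot\to B_{n}$ of $(\aC,\aC)$-bimodules.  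The main technical wrinkle is the identification of the latching object with the domain of $u^{\Box n}$: one must verify that the iterated colimit defining the latching object, which parametrizes tensors with $u$ inserted in at least one of $n$ middle positions, matches the punctured-cube colimit presentation of the domain of the iterated pushout-product.  This is routine bookkeeping but requires care with the indexing conventions of both constructions.
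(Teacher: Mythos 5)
Your proof is correct and follows the same route the paper indicates (it leaves the argument as a one-line remark citing "the pushout-product property of the smash product of spectra and the construction of the latching object of a simplicial spectrum as a sequence of pushouts"). You have filled in exactly that outline: reducing Reedy fibrancy of $\CC\supdot(\aC,\aM)$ to Reedy cofibrancy of $B\subdot(\aC;\aC;\aC)$ via the enrichment axiom, and then obtaining Reedy cofibrancy of the bar construction by applying the left Quillen free-bimodule functor to the iterated pushout-product description of the latching maps for unit insertion, using that the unit $\bS_{O(\aC)}\to\aC$ is a pointwise cofibration under the pointwise relatively cofibrant hypothesis.
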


The previous proposition together with the formula for $\CC(\aC;\aM)$
in~\eqref{eq:HM} proves invariance under weak equivalences of fibrant
$\aM$ for $\aC$ satisfying the hypothesis.  Although this is all we
need for the proof of Theorem~\ref{thm:catzigzag} below, we state a
more general invariance theorem for convenience of future reference.

\begin{thm}\label{thm:inv}
Let $\aC$ be a pointwise relatively cofibrant small spectral
category and let $\aM$ be a a fibrant $(\aC,\aC)$-bimodule.
\begin{enumerate}
\item $\CC(\aC;\aM)$
represents the derived functor $\bR\Hom^{b}_{\aC,\aC}(\aC,\aM)$.  In
particular, $\CC(\aC;-)$ preserves weak equivalences between fibrant $(\aC,\aC)$-bimodules.
\item Assume $\aC'$ is a pointwise relatively cofibrant small spectral
category.  If $\phi \colon \aC'\to \aC$ be a DK-equivalence, then the
induced map $\CC(\aC;\aM)\to \CC(\aC';\phi^{*}M)$ is a weak equivalence.
\end{enumerate}
\end{thm}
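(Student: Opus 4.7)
The plan for part~(i) is to identify $\CC(\aC;\aM)$ with the right-derived $\dR\Hom^{b}_{\aC,\aC}(\aC,\aM)$. Since $\CC\supdot(\aC;\aM)=\Hom^{b}_{\aC,\aC}(B\subdot(\aC;\aC;\aC),\aM)$ by definition, the standard simplicial-cosimplicial adjunction between $\Tot$ of a cosimplicial mapping spectrum and the mapping spectrum out of the geometric realization yields an isomorphism $\CC(\aC;\aM)\iso \Hom^{b}_{\aC,\aC}(|B\subdot(\aC;\aC;\aC)|,\aM)$. The remaining task is to show that the augmentation $|B\subdot(\aC;\aC;\aC)|\to \aC$ is a cofibrant replacement of $\aC$ as a $(\aC,\aC)$-bimodule: the weak-equivalence part is an extra-degeneracy argument (inserting the unit $\bS_{O(\aC)}\to\aC$ as a leftmost factor) exactly parallel to the proof of Proposition~\ref{prop:epsilon}, and the cofibrancy of $|B\subdot(\aC;\aC;\aC)|$ follows from a Reedy-cofibrancy computation analogous to the proposition preceding Proposition~\ref{prop:hypreedy}, using pointwise relative cofibrancy of $\aC$. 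Invoking the $\Hom^{b}_{\aC,\aC}$-analog of Theorem~\ref{thm:LMmain} (that pairing a cofibrant bimodule with a fibrant one computes the right derived functor) then gives both conclusions of~(i).

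For part~(ii), the plan is to apply~(i) to both sides and factor the natural comparison map through $\phi^{*}\aC$:
\[
\dR\Hom^{b}_{\aC,\aC}(\aC,\aM)\to \dR\Hom^{b}_{\aC',\aC'}(\phi^{*}\aC,\phi^{*}\aM)\from \dR\Hom^{b}_{\aC',\aC'}(\aC',\phi^{*}\aM).
\]
The backward arrow is induced by the canonical map $\aC'\to \phi^{*}\aC$ of $(\aC',\aC')$-bimodules; because $\phi$ is a DK-embedding, this is a pointwise weak equivalence of pointwise cofibrant bimodules, and hence precomposition in $\dR\Hom^{b}(-,\phi^{*}\aM)$ (with $\phi^{*}\aM$ pointwise fibrant) is a weak equivalence. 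For the forward arrow, I plan to argue that $\phi^{*}$ is a right Quillen equivalence on the bimodule categories when $\phi$ is a DK-equivalence, so that the induced map on $\dR\Hom^{b}$ spectra is a weak equivalence. Combining the two arrows then yields~(ii).

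The main obstacle will be the DK-equivalence version of the Quillen-equivalence statement for $\phi^{*}$. Proposition~\ref{prop:change} handles only weak equivalences of bi-indexed ring spectra (equivalently, strictly bijective-on-objects pointwise weak equivalences of small spectral categories), so the essential-surjectivity component of a general DK-equivalence requires separate treatment. My plan is to reduce the general case to the bijective-on-objects case by a standard zigzag: apply Proposition~\ref{prop:replace}-type replacements to work with pointwise relatively cofibrant-fibrant models, then factor $\phi$ as an essentially-surjective-fully-faithful inclusion composed with a bijective-on-objects morphism, handling the essentially-surjective-fully-faithful piece by a cofinality argument on the bar construction---using that every object of $\aC$ is homotopy-equivalent to one in the image of $\phi$ to match up the $O(\aC)$-indexed and $O(\aC')$-indexed products in the definition of $\Hom^{b}$ modulo their equalizer conditions.
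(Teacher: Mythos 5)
Your plan for part~(i) follows the same basic outline as the paper, but there is a genuine technical gap: you claim $|B\subdot(\aC;\aC;\aC)|$ is a \emph{cofibrant} $(\aC,\aC)$-bimodule, and this is not true in the required generality.  Under the hypothesis that $\aC$ is pointwise relatively cofibrant, the only assumption on the diagonal mapping spectra is that the unit $\bS\to\aC(c,c)$ is a cofibration; in EKMM $S$-modules (and in positive model structures on symmetric/orthogonal spectra) $\bS$ is not cofibrant, so $\aC(c,c)$ need not be cofibrant and the bar construction need not be cofibrant.  What the paper actually proves is that the inclusion of the degree-zero part $\aC\otimes\aC\to B\subdot(\aC;\aC;\aC)$ is a Reedy cofibration, which gives that $B(\aC;\aC;\aC)$ is \emph{semicofibrant} as a $(\aC,\aC)$-bimodule, and then appeals to \cite[6.3]{LewisMandell2}, which is precisely the statement that $\Hom$ out of a semicofibrant object into a fibrant object computes the derived functor.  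If you want to proceed via an honest cofibrant replacement, you would need to insert a cofibrant approximation step and prove it interacts correctly with $\Tot$ of the cosimplicial construction; the semicofibrant route is cleaner and is the one the paper takes.

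For part~(ii) you identify the right obstacle --- Proposition~\ref{prop:change} only handles strict (object-bijective, pointwise) weak equivalences, while a DK-equivalence may change objects up to homotopy --- but your proposed fix (``a cofinality argument on the bar construction'' after a replacement-and-factorization) is not an argument; it is a to-do list.  Your forward arrow requires knowing that restriction along a DK-equivalence induces an equivalence on derived $\Hom^{b}$ of bimodules, which is essentially equivalent to what you are trying to prove, so as written the argument is circular unless the cofinality step is actually carried out.  The paper's own proof of~(ii) is just the one sentence that it ``follows immediately from part~(i),'' which is terser than I would like, but the intended reading is that part~(i) identifies both sides with derived $\Hom^b$ and the identification of $\dL\phi_!\aC'$ with $\aC$ (equivalently, invariance of the bimodule derived category under DK-equivalences) handles the essential-surjectivity component.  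If you want to give a self-contained argument you should prove that comparison statement directly rather than gesture at cofinality.
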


\begin{proof}
The hypothesis on $\aC$ implies that the inclusion of
the degree zero part of bar construction 
\[
\aC\otimes \aC\to B\subdot(\aC;\aC;\aC)
\]
is a Reedy cofibration of $(\aC,\aC)$-bimodules, and it follows that
$B(\aC;\aC;\aC)$ is a semicofibrant $(\aC,\aC)$-bimodule.  Part~(i) is
then~\cite[6.3]{LewisMandell2}.  Part~(ii) follows immediately from
part~(i). 
\end{proof}

Proposition~\ref{prop:hypreedy} does not apply directly to
$\Cat_{\aF}$ under the hypotheses of Theorem~\ref{thm:catzigzag}
unless we further require $\aF$ to be pointwise cofibrant (which is
not the case in the main example of interest, $\aF=\aF_{\phi}$ for a
DK-embedding $\phi \colon \aD\to \aC$).  Nevertheless, the same
argument applies to prove the following proposition.

\begin{prop}
If $\aC$, $\aD$, and $\aF$ satisfy the hypotheses of
Theorem~\ref{thm:catzigzag}, then the cosimplicial spectrum
$\CC\supdot(\Cat_{\aF})$ is Reedy fibrant.
\end{prop}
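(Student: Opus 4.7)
The plan is to reproduce the proof of Proposition~\ref{prop:hypreedy} nearly verbatim, exploiting the structural observation that in the two-sided bar construction for $\Cat_{\aF}$ the noncofibrant factor $\aF$ appears in a tightly controlled way. Recall that Reedy fibrancy of $\CC\supdot(\Cat_{\aF})$ is equivalent, via the adjunction between $\Hom^{b}_{\Cat_{\aF},\Cat_{\aF}}$ and $\otimes$, to the assertion that the latching map $L_{n}B\subdot \to B_{n}$ is a cofibration of $(\Cat_{\aF},\Cat_{\aF})$-bimodules for $B\subdot = B\subdot(\Cat_{\aF};\Cat_{\aF};\Cat_{\aF})$. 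The standard pushout-product argument used in Proposition~\ref{prop:hypreedy} reduces this to two facts about $\Cat_{\aF}$: that each unit map $\bS \to \Cat_{\aF}(x,x)$ is a cofibration of spectra and that the off-diagonal mapping spectra are cofibrant.

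The first condition holds unconditionally under our hypotheses because $\aC$ and $\aD$ are both pointwise relatively cofibrant, so for every $x \in O(\Cat_{\aF}) = O(\aC) \amalg O(\aD)$ the unit of $\Cat_{\aF}(x,x)$ is already the unit of $\aC(x,x)$ or $\aD(x,x)$. The second condition fails only for bi-indices $(a,b)$ with $a \in O(\aD)$ and $b \in O(\aC)$, where $\Cat_{\aF}(a,b) = \aF(a,b)$ is assumed only pointwise semicofibrant-fibrant. The key rescuing observation is structural: since $\Cat_{\aF}(x,y) = *$ whenever $x \in O(\aC)$ and $y \in O(\aD)$, every nonbasepoint summand of $B_{n}$ at a bi-index $(a_{0},a_{n+1})$ is indexed by a sequence $a_{0},\ldots,a_{n+1}$ that is monotone with respect to the bipartition $O(\aC) \amalg O(\aD)$, that is, lies entirely in $O(\aC)$, entirely in $O(\aD)$, or transitions from $O(\aC)$ to $O(\aD)$ exactly once. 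Consequently each nonbasepoint summand is a smash product of $\aC$- and $\aD$-mapping spectra together with \emph{at most one} $\aF$-factor $\aF(a_{j},a_{j+1})$.

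This summand decomposition is preserved by the latching construction, since the face maps used to build $L_{n}B\subdot$ preserve the value of $j$ at which the $\aF$-factor sits (they multiply adjacent $\aC$-factors, adjacent $\aD$-factors, or collapse the $\aF$-factor with an adjacent unit). For a summand lying entirely in $O(\aC)$ or entirely in $O(\aD)$, the latching map is a cofibration by the argument of Proposition~\ref{prop:hypreedy} applied to $\aC$ or $\aD$ alone. For a summand crossing from $O(\aC)$ to $O(\aD)$, we first apply the pushout-product property to the $\aC$-factors to the left and the $\aD$-factors to the right of the $\aF$-factor — where all mapping spectra are cofibrant and all unit maps are cofibrations — obtaining a cofibration of bi-indexed spectra, and then smash this cofibration with the single factor $\aF(a_{j},a_{j+1})$. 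Since smashing with a pointwise semicofibrant spectrum preserves cofibrations~\cite[1.2]{LewisMandell2}, the resulting map is still a cofibration. Taking wedges over all summands assembles these into the required cofibration $L_{n}B\subdot \to B_{n}$.

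The main technical obstacle will be the combinatorial bookkeeping needed to verify that the latching decomposition respects the single $\aF$-factor position and that the pushout-product analysis cleanly separates into a first stage involving only cofibrant $\aC$- and $\aD$-factors (where Proposition~\ref{prop:hypreedy} applies without change) and a second stage consisting of a single smash with $\aF$. Once this separation is in hand, the semicofibrancy of $\aF$ — rather than full cofibrancy — suffices, and the Reedy fibrancy conclusion follows exactly as before.
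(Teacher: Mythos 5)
Your proof is correct and spells out exactly what the paper leaves implicit in its one-line ``the same argument applies'': the vanishing $\Cat_{\aF}(c,d) = *$ for $c \in O(\aC)$, $d \in O(\aD)$ forces at most one $\aF$-factor in each nontrivial summand of the bar construction, always in an off-diagonal (hence non-unit) position, so the pushout-product analysis involves only $\aC$- and $\aD$-factors and the lone $\aF$-factor can be smashed in at the end using semicofibrancy. One small slip: the latching object $L_{n}B\subdot$ is assembled from degeneracy maps (unit insertions), not the face maps your parenthetical describes; since units live only at diagonal positions $\Cat_{\aF}(c,c) = \aC(c,c)$ or $\aD(c,c)$, degeneracies never touch the $\aF$-factor either, so the conclusion you draw is unaffected.
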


\begin{cons}
Let $\aC$ and $\aD$ be small spectral categories and $\aF$ a
$(\aC,\aD)$-bimodule.  We construct a map of cosimplicial spectra 
\[
\gamma\supdot \colon \CC\supdot(\Cat_{\aF})\to \Hom^{b}_{\aC,\aD}(\aR\subdot(\aF),\aF)
\]
as follows.  In cosimplicial degree $0$,
we have
\[
\CC^{0}(\Cat_{\aF})\iso \prod_{d\in O(\aD)}\aD(d,d)\times \prod_{c\in O(\aC)}\aC(c,c)
\]
while 
\begin{align*}
\Hom^{b}_{\aC,\aD}&(\aR_{0}(\aF),\aF)=\Hom^{b}_{\aC,\aD}(\aF\otimes
\aD\vee \aC\otimes \aF,\aF)\\
&\iso \Hom^{b}_{\bS_{O(\aD)},\aD}(\aD,\Hom^{\ell}_{\aC}(\aF,\aF))
   \times \Hom^{b}_{\aC,\bS_{O(\aC)}}(\aC,\Hom^{r}_{\aD}(\aF,\aF))\\
&\iso \prod_{d\in O(\aD)}(\Hom^{\ell}_{\aC}(\aF,\aF))(d,d)\times 
\prod_{c\in O(\aC)}(\Hom^{r}_{\aD}(\aF,\aF))(c,c)
\end{align*}
and we define $\gamma^{0}$ to be the product of the centralizer maps.
For $n>0$, for any $j=1,\ldots,n$, given $c_{0},\ldots,c_{j-1}\in
O(\aC)$ and $d_{j},\ldots,d_{n}\in O(\aD)$ let 
\begin{multline*}
(\aC,\aF,\aD)_{n,j}(c_{0},\ldots,c_{j-1},d_{j},\ldots,d_{n})=\\
\qquad\quad  \aC(c_{1},c_{0})\sma \cdots \sma \aC(c_{j-1},c_{j-2})\sma
  \aF(d_{j},c_{j-1})\sma \aD(d_{j+1},d_{j})\sma\cdots\sma \aD(d_{n},d_{n-1}),
\end{multline*}
where we understand this formula as
\begin{align*}
&\aF(d_{1},c_{0})\sma \aD(d_{2},d_{1})\sma\cdots\sma
\aD(d_{n},d_{n-1}), \qquad \text{and}\\
&\aC(c_{1},c_{0})\sma \cdots \sma \aC(c_{n-1},c_{n-2})\sma
  \aF(d_{n},c_{n-1})
\end{align*}
when $j=1$ and $j=n$, respectively.   Then in this notation,
\begin{align*}
\CC^{n}&(\Cat_{\aF})\iso
\prod_{d_{0},\ldots,d_{n}\in O(\aD)}F(\aD(d_{1},d_{0})\sma\cdots\sma \aD(d_{n},d_{n-1}),\aD(d_{n},d_{0}))\\
&\times 
\prod_{j=1}^{n}\ \prod_{\putatop{c_{0},\ldots,c_{j-1}\in O(\aC)}{d_{j},\ldots,d_{n}\in O(\aD)}}
  F((\aC,\aF,\aD)_{n,j}(c_{0},\ldots,c_{j-1},d_{j},\ldots,d_{n}),\aF(d_{n},c_{0}))\\
&\times \prod_{c_{0},\ldots,c_{n}\in O(\aC)}F(\aC(c_{1},c_{0})\sma\cdots\sma \aC(c_{n},c_{n-1}),\aC(c_{n},c_{0}))
\end{align*}
while
\begin{align*}
\Hom^{b}_{\aC,\aD}&(\aR_{n}(\aF),\aF)\iso\\
&\prod_{d_{0},\ldots,d_{n}\in O(\aD)}F(\aD(d_{1},d_{0})\sma\cdots\sma \aD(d_{n},d_{n-1}),(\Hom^{\ell}_{\aC}(\aF,\aF))(d_{n},d_{0}))\\
&\times 
\prod_{j=1}^{n}\ \prod_{\putatop{c_{0},\ldots,c_{j-1}\in O(\aC)}{d_{j},\ldots,d_{n}\in O(\aD)}}
  F((\aC,\aF,\aD)_{n,j}(c_{0},\ldots,c_{j-1},d_{j},\ldots,d_{n}),\aF(d_{n},c_{0}))\\
&\times \prod_{c_{0},\ldots,c_{n}\in O(\aC)}F(\aC(c_{1},c_{0})\sma\cdots\sma \aC(c_{n},c_{n-1}),(\Hom^{r}_{\aD}(\aF,\aF))(c_{n},c_{0}))
\end{align*}
and we define $\gamma^{n}$ to be the map induced by the centralizer
maps $\aD\to \Hom^{\ell}_{\aC}(\aF,\aF)$ and $\aC\to
\Hom^{r}_{\aD}(\aF,\aF)$ on the outer factors and the identity on the
inner factors.  The maps $\gamma\supdot$ clearly commute with the
degeneracy maps and all but the zeroth and last face maps.  A tedious
but straightforward check of the definitions verifies that the
$\gamma\supdot$ also commutes with the zeroth and last face maps.
Let $\gamma$ denote the map on $\Tot$ induced by $\gamma\supdot$.
\end{cons}

\begin{proof}[Proof of Theorem~\ref{thm:catzigzag}]
Fix $\aC$, $\aD$, and $\aF$ as in the statement.
Let $\aF'\to \aF$ be a cofibrant replacement in the category of
$(\aC,\aD)$-bimodules, and consider the composite map
\[
\gamma'\colon \CC(\Cat_{\aF})\overto{\gamma}\Hom^{b}_{\aC,\aD}(\aR(\aF),\aF)\to 
\Hom^{b}(\aR(\aF'),\aF).
\]
We note that $\gamma'$ can be described in terms of the $\Tot$ of
a cosimplicial map with formula analogous to $\gamma$.
The inclusion of the summands 
\[
\aF'\otimes \aD \otimes \cdots \otimes \aD\qquad \text{and}\qquad 
\aC\otimes \cdots \otimes \aC\otimes \aF'
\]
in $\aR\subdot(\aF')$ assemble to a map of simplicial $(\aC,\aD)$-bimodules
\[
B\subdot(\aF';\aD;\aD)\vee
B\subdot(\aC;\aC;\aF)
\to \aR\subdot(\aC;\aF';\aD)
\]
where $B$ denotes the two-sided bar construction for $\otimes$.
The hypotheses on $\aC$, $\aD$, and $\aF'$ are sufficient for this map
to be a Reedy cofibration, and so it induces a fibration
\[
\Hom^{b}_{\aC,\aD}(\aR(\aF'),\aF)\to 
\Hom^{b}_{\aC,\aD}(B(\aF';\aD;\aD)\vee B(\aC;\aC;\aF'),\aF)
\]
since $\aF$ is fibrant.  We then have a canonical isomorphism 
\begin{align*}
&\Hom^{b}_{\aC,\aD}(B(\aF';\aD;\aD)\vee B(\aC;\aC;\aF'),\aF)\\
&\ \ \iso\Hom^{b}_{\aC,\aD}(B(\aF';\aD;\aD),\aF)\times \Hom^{b}_{\aC,\aD}(B(\aC;\aC;\aF'),\aF)\\
&\ \ \iso\Hom^{b}_{\aC,\aD}(\aF'\otimes_{\aD}B(\aD;\aD;\aD),\aF)\times \Hom^{b}_{\aC,\aD}(B(\aC;\aC;\aC)\otimes_{\aC}\aF',\aF)\\
&\ \ \iso\Hom^{b}_{\aD,\aD}(B(\aD;\aD;\aD),\Hom^{\ell}_{\aC}(\aF',\aF))\times \Hom^{b}_{\aC,\aC}(B(\aC;\aC;\aC),\Hom^{r}_{\aD}(\aF',\aF))\\
&\ \ \iso\CC(\aD;\Hom^{\ell}_{\aC}(\aF',\aF))\times \CC(\aC;\Hom^{r}_{\aD}(\aF',\aF))
\end{align*}
Opening up the construction of $\gamma$, we see that the following
diagram commutes
\[
\xymatrix{%
\CC(\Cat_{\aF})\ar[r]\ar[d]_{\gamma'}&\CC(\aD)\otimes \CC(\aC)\ar[d]\\
\Hom^{b}_{\aC,\aD}(\aR(\aF'),\aF)\ar[r]&\CC(\aD;\Hom^{\ell}_{\aC}(\aF',\aF))\times \CC(\aC;\Hom^{r}_{\aD}(\aF',\aF))
}
\]
where the right vertical map is induced by the double centralizer maps
on the bimodule variables of $\CC$.  We have observed that bottom
horizontal map is a fibration and in particular the $\Tot$ of a Reedy
fibration of cosimplicial spectra; the top horizontal map is also a
fibration and the $\Tot$ of a Reedy fibration of cosimplicial spectra.
The map on horizontal fibers is the $\Tot$ of the cosimplical map that
in each degree is the weak equivalence 
\begin{multline*}
\prod_{j=1}^{n}\ \prod_{\putatop{c_{0},\ldots,c_{j-1}\in O(\aC)}{d_{j},\ldots,d_{n}\in O(\aD)}}
  F((\aC,\aF,\aD)_{n,j}(c_{0},\ldots,c_{j-1},d_{j},\ldots,d_{n}),\aF(d_{n},c_{0}))\\
\to \prod_{j=1}^{n}\ \prod_{\putatop{c_{0},\ldots,c_{j-1}\in O(\aC)}{d_{j},\ldots,d_{n}\in O(\aD)}}
  F((\aC,\aF',\aD)_{n,j}(c_{0},\ldots,c_{j-1},d_{j},\ldots,d_{n}),\aF(d_{n},c_{0})).
\end{multline*}
$\Tot$ takes this degreewise weak equivalence of Reedy fibrant
objects to a weak equivalence of spectra.
It follows that the square above is homotopy cartesian. Both maps 
\begin{gather*}
\Hom^{b}(\aR(\aF'),\aF)\to 
\Hom^{b}_{\aC,\aD}(B(\aF';\aD;\aD),\aF)
\iso \CC(\aD;\Hom^{\ell}_{\aC}(\aF',\aF))\\
\Hom^{b}(\aR(\aF'),\aF)\to 
\Hom^{b}_{\aC,\aD}(B(\aC;\aC;\aF'),\aF)
\iso \CC(\aC;\Hom^{r}_{\aD}(\aF',\aF))
\end{gather*}
are weak equivalences since the maps $B(\aF';\aD;\aD)\to \aR(\aF)$ and
$B(\aC;\aC;\aF')\to \aR(\aF)$ are weak equivalences of cofibrant
$(\aC,\aD)$-bimodules.  Since $\aF'$ is cofibrant as both a left
$\aC$-object and right $\aD$-object (Theorem~\ref{thm:LMmain}.(i) and the
right object version), $\Hom^{\ell}_{\aC}(\aF',\aF)$ and
$\Hom^{r}_{\aD}(\aF',\aF)$ are pointwise fibrant.  It follows that when $\aD\to
\Hom^{\ell}_{\aC}(\aF',\aF)$ is a weak equivalence, so is the map
$\CC(\Cat_{\aF})\to \CC(\aC)$; likewise, 
when $\aC\to
\Hom^{r}_{\aD}(\aF',\aF)$ is a weak equivalence, so is the map
$\CC(\Cat_{\aF})\to \CC(\aD)$.  By Theorem~\ref{thm:LMmain}.(iii) (and its
analogue for $\Hom^{r}_{\aD}$), both 
$\Hom^{\ell}_{\aC}(\aF',\aF)$ and $\Hom^{r}_{\aD}(\aF',\aF)$ 
represent the derived functors in the statement of the centralizer
conditions.  The theorem now follows.
\end{proof}


\bibliographystyle{plain}
\bibliography{thhtc}

\end{document}